\newtheorem{thm}{Theorem}[section]
\newtheorem{cor}[thm]{Corollary}
\newtheorem{lem}[thm]{Lemma}
\theoremstyle{definition}
\newtheorem{defn}[thm]{Definition}
\theoremstyle{remark}
\newtheorem{rem}[thm]{Remark}
\numberwithin{equation}{section}
\newcommand{\norm}[1]{\left\Vert#1\right\Vert}
\newcommand{\abs}[1]{\left\vert#1\right\vert}
\newcommand{\babs}[1]{\Big \vert#1 \Big \vert}
\newcommand{\set}[1]{\left\{#1\right\}}
\newcommand{\parr}[1]{\left (#1\right )}
\newcommand{\brac}[1]{\left [#1\right ]}
\newcommand{\bigbrac}[1]{\Big [#1 \Big ]}
\newcommand{\ip}[1]{\left \langle #1 \right \rangle }
\newcommand{\Real}{\mathbb R}
\newcommand{\eps}{\varepsilon}
\newcommand{\To}{\longrightarrow}
\newcommand{\too}{\rightarrow}
\newcommand{\A}{\mathcal{A}}
\newcommand{\B}{\mathcal{B}}
\newcommand{\bbar}[1]{\overline{#1}}
\newcommand{\wt}[1]{\widetilde{#1}} 
\newcommand{\wh}[1]{\widehat{#1}} 
\newcommand{\hh}[1]{\breve{#1}}
\def \M{\mathcal{M}}
\def \N{\mathcal{N}}
\def \PL{PL} 
\def \ncPL{ncPL} 
\def \bfi{\textbf{\footnotesize{i}}} 
\def \C{\mathbb{C}} 
\def \eC{\widehat{\mathds{C}}} 
\def \D{\mathcal{D}} 
\def \Md{M_{\D}} 
\def \Mec{M_{\eC}} 
\def \T{\mathcal{T}} 
\def \vol{\mbox{\rm{\footnotesize{vol}}}} 
\def \Vol{\mbox{\rm{vol}}} 
\def \V{V}
\def \T{T}
\def \E{E}
\def \F{F}
\def \mM{\textbf{\textsf{M}}}
\def \mN{\textbf{\textsf{N}}}
\def \mV{\textbf{\textsf{V}}}
\def \mE{\textbf{\textsf{E}}}
\def \mF{\textbf{\textsf{F}}}
\def \mv{\textbf{\textsf{v}}}
\def \me{\textbf{\textsf{e}}}
\def \mf{\textbf{\textsf{f}}}%
\def \R{\mathbb{R}}
\def \Mbs{M\"{o}bius }
\def \d{\mbox{\bf{d}}}
\def \X{\mathcal{X}}
\def \Y{\mathcal{Y}}
\def \P{\mathcal{P}}
\def \MM{\mathbb{M}}
\def \GG{\mathbb{G}}
\def \stu{*u}
\def \CC{\mathcal{C}} 
\def \L{\mathcal{L}} 
\def \Q{\mathcal{Q}} 
\begin{document}

\title[Conformal Wasserstein Distance II: Comparing disk and sphere-type surfaces in polynomial time II, computational aspects]
{Conformal Wasserstein Distance:\\ II. Computational Aspects and Extensions}
\author{Y. Lipman, J. Puente, I. Daubechies }
\address{Princeton University}
%
%
\begin{abstract}
This paper is a companion paper to
\cite{Lipman_Daubechies:2010:polytimesurfcomp}. We provide numerical
procedures and algorithms for
computing the alignment of and
distance between two disk type surfaces. We provide a convergence
analysis of the discrete approximation to the arising
mass-transportation problems. We furthermore generalize the
framework to support sphere-type surfaces, and prove a result
connecting this distance to local geodesic distortion. Lastly, we
provide numerical experiments on several surfaces' datasets and
compare to state of the art method.
\end{abstract}
\maketitle

\printnomenclature

\section{introduction and background}
\label{s:intro_and_background}

Alignment of surfaces plays a role in a wide range of scientific
disciplines. It is a standard problem in comparing different scans
of manufactured objects; various algorithms have been proposed for
this purpose in the computer graphics literature. It is  often also
a crucial step in a variety of problems in medicine and biology; in
these cases the surfaces tend to be more complex, and the alignment
problem may be harder. For instance, neuroscientists studying brain
function through functional Magnetic Resonance Imaging (fMRI)
typically observe several people performing identical tasks,
obtaining readings for the corresponding activity in the brain
cortex of each subject. In a first approximation, the cortex can be
viewed as a highly convoluted 2-dimensional surface. Because
different cortices are folded in very different ways, a synthesis of
the observations from different subjects must be based on
appropriate mappings between pairs of brain cortex surfaces, which
reduces to a family of surface alignment problems
\cite{Fischl99,Haxby09}. In another example, paleontologists
studying molar teeth of mammals rely on detailed comparisons of the
geometrical features of the tooth surfaces to distinguish species or
to determine similarities or differences in diet \cite{Jukka07}.

Mathematically, the problem of surface alignment can be described as
follows: given two 2-surfaces $\M$ \nomenclature{$\M$\
($\N$)}{Differentiable surfaces homeomorphic to a disk or a sphere
(will be clear from the context).} and $\N$, find a mapping $f:\M
\rightarrow \N$ that preserves, as best possible, ``important
properties'' of the surfaces. The nature of the ``important
properties'' depends on the problem at hand. In this paper, we
concentrate on preserving the geometry, i.e., we would like the map
$f$ to preserve intrinsic distances, to the extent possible. In
terms of the examples listed above, this is the criterion
traditionally selected in the computer graphics literature; it also
corresponds to the point of view of paleontologists studying tooth
surfaces. To align cortical surfaces, one typically uses the
Talairach method \cite{Lancaster00} (which relies on geometrically
defined landmarks and is thus geometric in nature as well), although
alignment based on functional correspondences has been proposed more
recently \cite{Haxby09}.

In \cite{Lipman_Daubechies:2010:polytimesurfcomp} a novel
procedure between disk type surfaces
was proposed,
based on uniformization theory
and optimal mass transportation.
In a nutshell, the
method maps two surfaces $\M,\N$ to densities $\mu,\nu$ (interpreted
as mass densities) defined on the hyperbolic disk $\D=\set{z \mid
\abs{z}<1}$ \nomenclature{$\D$}{Unit disk $\set{z\mid \abs{z}<1}$},
their canonical uniformization space. (Apart from simplifying the
description of the surface, this also removes any effect of global
translations and rotations on the description of each individual
surface.)  The alignment problem can then be studied in the
framework of Kantorovich mass-transportation \cite{Kantorovich1942}
between these metric densities. Mass-transportation seeks to
minimize the ``average distance'' over which mass needs to be
``moved'' (in the most efficient such moving procedure) to transform
one mass density $\mu$ into another, $\nu$:
\begin{equation}\label{e:basic_Kantorovich_transporation}
    T_c(\mu,\nu) = \mathop{\inf}_{\pi \in \Pi(\mu,\nu)}\int_{\D \times \D}c(z,w)d\pi(z,w),
\end{equation}
where $c(z,w)\geq 0$ is a cost function, and $\Pi(\mu,\nu)$
\nomenclature{$\Pi(\mu,\nu)$}{the collection of probability measures
on $\D \times \D$ with marginals $ \mu$ and $\nu$ (resp.)}
is the
collection of probability measures on $\D \times \D$ with marginals
$ \mu$ and $\nu$ (resp.), that is, for $A\subset \D$, $B\subset \D$,
$\pi(A\times \D) = \mu(A)$ and $\pi(\D \times B) = \nu(B)$.

In our case the uniformizing metric density (or conformal factor)
corresponding to an initial surface is not unique, but is defined
only up to a M\"{o}bius transformation. Because a na\"{\i}ve
application of mass-transportation on the hyperbolic disk would not
possess the requisite invariance under M\"{o}bius transformations,
we generalize the mass-transportation framework, and replace the
cost function $d(z,w)$ traditionally used in defining the ``average
displacement distance'' by a cost that depends on $\mu$ and $\nu$,
$d^R_{\mu,\nu}(z,w)$, measuring the dissimilarity between the two
metric densities on $R$-neighborhoods of $z$ and $w$ (where $R$ is a
parameter that controls the size of the neighborhoods):
\begin{equation}\label{e:generalized_Kantorovich_transportation}
    T^R_d(\mu,\nu) = \mathop{\inf}_{\pi \in \Pi(\mu,\nu)}\int_{\D \times
    \D}d^R_{\mu,\nu}(z,w)d\pi(z,w).
\end{equation}
\nomenclature{$T^R_d(\mu,\nu)$}{Optimal transportation cost between
the densities $\mu,\nu$ with the measure-dependent
$d^R_{\mu,\nu}(z,w)$ cost function.} Introducing neighborhoods also
makes the definition less sensitive to noise in practical
applications. The optimal way of transporting mass in this
generalized framework defines a corresponding optimal way of
aligning the surfaces. This approach also allows us to define a new
distance, $\d^R(\M,\N)$, between surfaces; the average distance over
which mass needs transporting (to transform one metric density into
the other) quantifies the extent to which the two surfaces differ.

This paper contains three contributions that complement and extend
\cite{Lipman_Daubechies:2010:polytimesurfcomp}. The first of these
is to provide an
algorithm for approximating $\d^R(\M,\N)$ and to prove a convergence
result for this algorithm.  In order to state this goal more
precisely, we introduce some technicalities and notations now.

\subsection{Uniformization}

Uniformization theory for Riemann surfaces
\cite{Springer57,Farkas92} allows conformally flattening disk type
surfaces onto the unit disk $\phi:\M \rightarrow \D$
\nomenclature{$\phi$}{The uniformization map taking the surface $\M$
conformally to its canonical domain, the unit disk $\D$.} in $\C$, where
$\phi$ is the conformal flattening map, $\D =\{z \ | \ |z|<1\}$ is
the unit disk, and the disk coordinate system is denoted by
$z=x^1+\bfi x^2$. \nomenclature{$z=x^1+\bfi x^2 \ (w=y^1+\bfi
y^2)$}{The coordinates in uniformization space for surface $\M$
($\N$).} The surface's Riemannian metric $g$ \nomenclature{$g$ \
($h$)}{The metric tensor of the surface $\M$ ($\N$).} is then
pushed-forward to a diagonal metric tensor
$$
\wt{g} = \phi_* g = \mu^E(z)\, \delta_{ij}\, dx^i \otimes dx^j,
$$ \nomenclature{$\wt{g} \, (\wt{h})$}{The metric of surface $\M$ ($\N$) pushed forward by the uniformization map to the $\D$, that is
$\wt{g}=\phi_*g$ ($\wt{h}=\phi_*h$).}
where $\mu^E(z)>0$, Einstein
summation convention is used, and the subscript $*$ denotes the
``push-forward'' action; \nomenclature{$\mu^E(z) \ (\nu^E(w))$}{The
conformal factor of surface $\M$ ($\N$) w.r.t. the Euclidean metric
in the unit disk.}
the superscript $E$ stands for Euclidean.
The function $\mu^E$ can also be viewed as the
\emph{density function} of the measure $\Vol_\M$
\nomenclature{$\Vol_\M \ (\Vol_\N)$}{The area measure of surface
$\M$ ($\N$).} induced by the Riemann volume element:
for (measurable) $A \subset \M$,
\begin{equation}\label{e:volume_element}
    \Vol_\M(A) = \int_{\phi(A)} \mu^E(z) \, d\vol_E(z),
\end{equation}
where $d\vol_E(z)=dx^1\wedge dx^2$
is the Euclidean area element \nomenclature{$\Vol_E$}{The standard
Lebesque (Euclidean) measure in $\D$.}. For a second surface $\N$
with Riemannian metric $h$ we will denote its pushed-forward metric
on the uniformization disk $\D$ by $\wt{h} = \phi_* h = \nu^E(w)\,
\delta_{ij}\, dy^i \otimes dy^j,$ where the coordinates in the unit
disk are $w=y^1+\bfi y^2$.

We use the hyperbolic metric on the unit disk
$(1-|z|^2)^{-2}\delta_{ij} dx^i \otimes dx^j$ as a reference metric;
the surface density w.r.t. the hyperbolic metric (conformal scaling)
is
\begin{equation}\label{e:relation_hyperbolic_euclidean_density}
\mu^H(z):=(1-|z|^2)^{2}\,\mu^E(z)\, ,
\end{equation}
where the superscript $H$ stands for hyperbolic.
\nomenclature{$\mu^H \ (\nu^H)$}{The surface conformal factor w.r.t.
the hyperbolic metric in the unit disk. } \nomenclature{$\mu \
(\nu)$}{We use $\mu$ ($\nu$) either to represent the density $\mu^H$
($\nu^H$) or the measure $\Vol_\M$ ($\Vol_\N$). (The meaning will be
clear from the context.)}

We shall often drop this superscript: unless otherwise stated
$\mu=\mu^H$, and $\nu=\nu^H$ in what follows. The density function
$\mu=\mu^H$ satisfies
$$\Vol_\M(A) = \int_{\phi(A)} \mu(z)\, d\vol_H(z)\,,$$
where $d\vol_H(z)=(1-|z|^2)^{-2}\, d\vol_E(z)$. We will use the
notations $\mu,\nu$ also to represent the \emph{measures}
$\Vol_\M,\Vol_\N$ (resp.), where the exact meaning will be clear
from the context. \nomenclature{$\Vol_H$}{Hyperbolic area measure in
$\D$.}

The conformal mappings of $\D$ to itself are the disk-preserving
M\"{o}bius transformations, they constitute the group $\Md$ of
isometries of the hyperbolic disk. An arbitrary element $m\in\Md$ is
characterized by three real parameters: \nomenclature{$\Md$}{The
M\"{o}bius subgroup that preserves the unit disk.}
\begin{equation}\label{e:disk_mobius}
    m(z) = e^{\bfi \theta}\frac{z-a}{1-\bar{a}z}, \ a\in \D, \ \theta \in [0,2\pi).
\end{equation}

The pull-back $m^* \mu(z)$ and the push-forward $m_* \mu(w)$ of the
density $\mu$ by the M\"{o}bius transformation $m$ are given by the
formulas
\begin{equation}\label{e:pullback_of_metric_density_mu_by_mobius}
    m^*\mu(z) = \mu(m(z)),
\end{equation}
and
\begin{equation}\label{e:push_forward_of_metric_density}
    m_* \mu(w) = \mu(m^{-1}(w)),
\end{equation}
respectively. \nomenclature{$m^*\mu(z)\ (m_* \mu(w))$}{The pull-back
(push-forward) of the measure $\mu$ by the map $m$.}

It follows that checking whether or not two surfaces $\M$ and $\N$
are isometric, or searching for isometries between $\M$ and $\N$, is
greatly simplified by considering the conformal mappings from $\M$,
$\N$ to $\D$: once the (hyperbolic) density functions $\mu$ and
$\nu$ are known, it suffices to identify $m \in \Md$ such that
$\nu(m(z))$ equals $\mu(z)$ (or ``nearly'' equals, in a sense to be
made precise).

\subsection{Optimal volume transportation for surfaces}

To adapt the optimal transportation framework to the alignment of
surfaces, we use an isometry invariant cost function
$d^R_{\mu,\nu}(z,w)$ that is plugged into the transportation
framework (\ref{e:basic_Kantorovich_transporation}). This special
cost function $d^R_{\mu,\nu}(z,w)$ indicates the extent to which a
neighborhood of the point $z$ in $(\D,\mu)$, the (conformal
representation of the) first surface, is isometric with a
neighborhood of the point $w$ in $(\D,\nu)$, the (conformal
representation of the) second surface. Two definitions are in order:
1) the neighborhoods we will use, and 2) how we
characterize
the (dis)similarity of two neighborhoods, equipped with different
metrics.

For the neighborhoods, we take the hyperbolic geodesic disks
$\Omega_{z_0,R}$ of radius $R$, where we let $z_0$ range over $\D$,
but keep the radius $R>0$ fixed. \nomenclature{$\Omega_{z_0,R}$}{The
hyperbolic geodesic disk of radius $R$ centered at $z_0\in\D$.} The
following gives an easy procedure to construct these disks. If
$z_0=0$, then the hyperbolic geodesic disks centered at $z_0=0$ are
also ``standard'' (i.e. Euclidean) disks centered at 0:
$\Omega_{0,R} = \{z \,;\, |z|\leq r_R \}$, where
$r_R=\mbox{tanh}(R)$. The hyperbolic disks around other centers are
images of these central disks under M\"{o}bius transformations (=
hyperbolic isometries): setting $m(z)=(z-z_0)(1-z\bar{z_0})^{-1}$,
we have
\begin{equation}\label{e:neighborhood_def}
    \Omega_{z_0,R} = m^{-1}(\Omega_{0,R})\,.
\end{equation}

Next, the (dis)similarity of the pairs $\left(\Omega_{z_0,R}\,,\,
\mu\,\right)$ and $\left(\Omega_{w_0,R}\,,\, \nu\,\right)$ is
defined via pull-back of $\nu$ and using the standard induced norm
(see \cite{Lipman_Daubechies:2010:polytimesurfcomp} for more
details). The final cost function is achieved by taking the infimum
over all M\"{o}bius transformations $m$ such that $m(z)=w$:
\begin{equation}\label{e:d_mu,nu(z,w)_def}
    d^R_{\mu,\nu}(z_0,w_0) :=
\mathop{\inf}_{m \in \Md\,,\,m(z_0)=w_0}\int_{\Omega_{z_0,R}}
\,|\,\mu(z) - (m^*\nu)(z)\,|\, d\vol_H(z),
\end{equation}
where $d\vol_H(z)=(1-|z|^2)^{-2} \,dx^1\wedge dx^2$ is the volume
form for the hyperbolic disk.
\nomenclature{$d^R_{\mu,\nu}(z,w)$}{The cost function on
$\D\times\D$, dependent on measures $\mu,\nu$ and scale $R$. Used in
the surface transportation framework.}

As proved in \cite{Lipman_Daubechies:2010:polytimesurfcomp}
$d^R_{\mu,\nu}(\cdot,\cdot)$ is a metric on $\D$ and as a
consequence
\begin{equation}\label{e:the_distance}
    \d^R(\M,\N)=T^R_d(\mu,\nu)
\end{equation}
(with $T^R_d(\mu,\nu)$ as defined in \ref{e:generalized_Kantorovich_transportation})
defines a semi-metric in the space of disk-type surfaces.
\nomenclature{$\d^R(\M,\N)$}{The distance between disk-type surfaces $\M$ and
$\N$, based on neighborhoods with hyperbolic radius $R$.} To ensure that this is a metric rather than only a
semi-metric, we add an extra assumption, namely that no
(orientation-preserving) self-isometries exist within each of the
compared surfaces. For discussion and more detail we refer the
reader to \cite{Lipman_Daubechies:2010:polytimesurfcomp}.

\subsection{Overview}

We can now formulate a precise overview of the
algorithm for approximating
$\d^R(\M,\N)$, and discuss its convergence properties.

In a nutshell, the key steps of the algorithm are: 1) approximate
uniformization for piecewise linear surface representations, 2)
discretize the continuous measures $\mu,\nu$ based on discrete
sample sets $Z=\set{z_i}_{i=1}^n,W=\set{w_j}_{j=1}^p$, obtaining
discrete measures by $\mu_Z,\nu_W$ (resp.), 3) approximate the
measure-dependent cost function $\hh{d}^R_{\mu,\nu}(z,w) \approx
d^R_{\mu,\nu}(z,w)$, and 4) calculate the discrete optimal
transportation cost $T_{\hh{d}}^R(\mu_Z,\nu_W)$ between the discrete
measures $\mu_Z,\nu_W$ based on the approximated cost function
$\hh{d}^R_{\mu,\nu}(z,w)$.

In the heart of our analysis we prove the convergence
$T_{\hh{d}}^R(\mu_Z,\nu_W) \too \d^R(\M,\N)$ as the ``mesh size'' of
the samplings
used in steps 2, 3 and 4,
tend to zero. More precisely, we define the \emph{fill
distance} $\varphi_g(Z)$ for the metric tensor $g$ and the sample
set $Z$ as \nomenclature{$\varphi_g(Z)$}{The fill distance of the
point set $Z=\set{z_i}$ w.r.t. metric tensor $g$.}
\begin{equation}\label{e:def_filling_distance}
    \varphi_g(Z)\,:=\,\sup \set{r>0\ \big| \ z \in \M:B_g(z,r)\cap Z =
\emptyset}~,
\end{equation}
where $B_g(z,r)$ \nomenclature{$B_g(z,r)$}{Geodesic open ball w.r.t.
metric tensor $g$, centered at $z$ and of radius $r$.} is the
geodesic open ball of radius $r$ centered at $z$. That is,
$\varphi_g(Z)$ is the radius of the largest geodesic ball that can
be fitted on the surface $\M$ without including any point of $Z$.
The smaller $\varphi_g(Z)$, the finer the sampling set. We prove the
following theorem:
\begin{thm}\label{thm:final_approx_discrete_transportation}
Let $\mu,\nu$ be Lipschitz continuous probability densities (w.r.t
the hyperbolic measure) defined over $\D$. Let $\M,\N$ be the
disk-type surfaces defined by the metric tensors
$g=\mu(z)\,(1-|z|^2)^{-2} \delta_{ij} dx^i\otimes dx^j, h=\nu(w)\,
(1-|w|^2)^{-2} \delta_{ij} dy^i\otimes dy^j$ (resp.), let $Z,W$ be
discrete sample sets on $\M,\N$ (resp.), $\set{p_k}$ samples in $\D$
used for numerical integration, and let $L$ be the number of
uniformly spaced points on the circle used to define $T_{\hh{d}}^R$
(see below). Then
\begin{eqnarray*}
\Big | \d^R(\M,\N) - T^R_{\hh{d}}(\mu_Z,\nu_W) \Big |
&=& \Big | T^R_{\hh{d}}(\mu,\nu) - T^R_{\hh{d}}(\mu_Z,\nu_W) \Big |\\
&\leq&
\omega_{d^R_{\mu,\nu}}\parr{2\max \parr{\varphi_g(Z),\varphi_h(W)}}
+ C_1\varphi_E  \parr{\set{p_k}} + C_2 \, L^{-1} \,+\, \epsilon_{\mbox{\tiny{\rm unif}}},
\end{eqnarray*}
where
$$\omega_{d^R_{\mu,\nu}}(t)=\sup_{d_\M(z,z') + d_\N(w,w') <
t}\abs{d^R_{\mu,\nu}(z,w)-d^R_{\mu,\nu}(z',w')}.$$
\nomenclature{$\omega_{f}(\cdot)$}{The modulus of continuity of the
function $f$.}
denotes the \emph{modulus of continuity} of the cost
function $d^R_{\mu,\nu}$, and $C_1,C_2$ are constants that depend only
upon $\mu,\nu,R$.
\end{thm}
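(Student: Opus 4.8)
The plan is to bound the quantity by a two‑step triangle inequality separating the four error sources. By \eqref{e:the_distance} we have $\d^R(\M,\N)=T^R_d(\mu,\nu)$ (this is the content of the first displayed equality in the statement), so it suffices to estimate $\big|T^R_d(\mu,\nu)-T^R_{\hh d}(\mu_Z,\nu_W)\big|$. Inserting the intermediate cost $T^R_d(\mu_Z,\nu_W)$, which pairs the \emph{exact} cost $d^R_{\mu,\nu}$ with the \emph{discretized} measures, gives
\begin{equation*}
\big|T^R_d(\mu,\nu)-T^R_{\hh d}(\mu_Z,\nu_W)\big|\le\big|T^R_d(\mu,\nu)-T^R_d(\mu_Z,\nu_W)\big|+\big|T^R_d(\mu_Z,\nu_W)-T^R_{\hh d}(\mu_Z,\nu_W)\big|;
\end{equation*}
the first summand will yield the modulus‑of‑continuity term, the second the remaining three.

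For the first summand I would use a gluing argument. Take $\mu_Z$ to be the pushforward of $\mu$ under the map sending each point of $\M$ to its nearest site in $Z$ (and $\nu_W$ analogously). Then \eqref{e:def_filling_distance}, read as $\varphi_g(Z)=\sup_{z\in\M}d_g(z,Z)$, provides a coupling $\gamma_1\in\Pi(\mu,\mu_Z)$ supported on pairs $(z,z_i)$ with $d_\M(z,z_i)\le\varphi_g(Z)$ and a coupling $\gamma_2\in\Pi(\nu_W,\nu)$ supported on pairs $(w_j,w)$ with $d_\N(w_j,w)\le\varphi_h(W)$. Gluing $\gamma_1$, an optimal plan for $T^R_d(\mu_Z,\nu_W)$, and $\gamma_2$ produces an admissible plan for $(\mu,\nu)$; invoking the definition of $\omega_{d^R_{\mu,\nu}}$ in the form $d^R_{\mu,\nu}(z,w)\le d^R_{\mu,\nu}(z_i,w_j)+\omega_{d^R_{\mu,\nu}}\!\big(d_\M(z,z_i)+d_\N(w,w_j)\big)$ and integrating gives $T^R_d(\mu,\nu)\le T^R_d(\mu_Z,\nu_W)+\omega_{d^R_{\mu,\nu}}(\varphi_g(Z)+\varphi_h(W))$. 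Gluing the transposed couplings onto an optimal plan for $(\mu,\nu)$ gives the reverse inequality; since $\omega_{d^R_{\mu,\nu}}$ is nondecreasing and $\varphi_g(Z)+\varphi_h(W)\le2\max(\varphi_g(Z),\varphi_h(W))$, the first summand is at most $\omega_{d^R_{\mu,\nu}}(2\max(\varphi_g(Z),\varphi_h(W)))$.

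For the second summand, both transport costs are infima of the same linear functional $\pi\mapsto\sum_{i,j}c(z_i,w_j)\pi_{ij}$ over the same set $\Pi(\mu_Z,\nu_W)$ of couplings of the probability measures $\mu_Z,\nu_W$, whence
\begin{equation*}
\big|T^R_d(\mu_Z,\nu_W)-T^R_{\hh d}(\mu_Z,\nu_W)\big|\le\max_{i,j}\big|d^R_{\mu,\nu}(z_i,w_j)-\hh d^R_{\mu,\nu}(z_i,w_j)\big|,
\end{equation*}
and it remains to bound this pointwise cost error, uniformly in $(i,j)$, by $C_1\varphi_E(\{p_k\})+C_2L^{-1}+\epsilon_{\mathrm{unif}}$. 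I would pull the integral defining $d^R_{\mu,\nu}(z_i,w_j)$ back to the fixed reference disk $\Omega_{0,R}$: the constraint $m(z_i)=w_j$ reduces $\Md$ to a circle of hyperbolic rotations about $w_j$, and since M\"{o}bius maps act on $\Omega_{0,R}$ as hyperbolic isometries, the pulled‑back integrand is Lipschitz on the compact disk $\Omega_{0,R}$ — with constant controlled by the Lipschitz constants of $\mu,\nu$ and by $R$, uniformly in $(i,j)$ — and Lipschitz in the rotation angle with constant of order $\sinh R$. Consequently (i) replacing the infimum over the rotation circle by the minimum over $L$ equispaced angles costs at most $C_2L^{-1}$; (ii) replacing $\int_{\Omega_{0,R}}(\cdot)\,d\vol_H$ by the quadrature rule on the nodes $\{p_k\}$ costs at most $C_1\varphi_E(\{p_k\})$, by the standard error bound for quadrature of a Lipschitz integrand; and (iii) replacing $\mu,\nu$ by the densities returned by the approximate PL uniformization perturbs the integrand in sup‑norm, with contribution absorbed into $\epsilon_{\mathrm{unif}}$ via the uniformization‑convergence analysis of the preceding section. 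The Lipschitz hypothesis on $\mu,\nu$ is used here, and makes $C_1,C_2$ depend only on $\mu,\nu,R$. Adding (i)--(iii) to the first summand yields the theorem.

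I expect the main obstacle to be the first step. The delicate points there are that the discrete measure must be constructed so that it admits a coupling with the continuous one whose \emph{width}, measured in the surface metric, is bounded by the fill distance (which forces the nearest‑site construction and the reading of \eqref{e:def_filling_distance} used above), and that the three couplings $\mu\leftrightarrow\mu_Z$, $\mu_Z\leftrightarrow\nu_W$, $\nu_W\leftrightarrow\nu$ must be glued into one competitor respecting \emph{both} marginal constraints at once, with the triangle‑type estimate for $d^R_{\mu,\nu}$ applied against exactly the additive displacement $d_\M(z,z_i)+d_\N(w,w_j)$ appearing in the definition of $\omega_{d^R_{\mu,\nu}}$. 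The remaining ingredients — the quadrature estimate and the rotation‑circle estimate — become routine once everything is transported to the fixed reference disk, and the uniformization term is imported directly from the earlier section.
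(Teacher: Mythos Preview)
Your proposal is correct and follows essentially the same route as the paper: the identical triangle-inequality split through $T^R_d(\mu_Z,\nu_W)$, the same pointwise bound on $|d^R_{\mu,\nu}(z_i,w_j)-\hh d^R_{\mu,\nu}(z_i,w_j)|$ for the second summand (the paper packages your quadrature and rotation estimates as Theorems~\ref{thm:convergence_of_numerical_quadrature} and~\ref{thm:final_approx_to_d^R_mu,nu}), and the same coupling idea for the first summand. The only cosmetic difference is that where you invoke the abstract gluing lemma on nearest-site couplings, the paper (Appendix~\ref{a:approximating_optimal_transport}, Theorem~\ref{thm:main_theorem}) writes out the two constructions explicitly via Voronoi cells---pushing a continuous $\pi$ forward to $\Lambda=\sum_{i,j}\pi(V_i\times W_j)\delta_{s_i,t_j}$ in one direction, and spreading a discrete $\Lambda$ out as $\pi=\sum_{i,j}\frac{\Lambda(V_i\times W_j)}{\mu(V_i)\nu(W_j)}\,\mu\times\nu|_{V_i\times W_j}$ in the other---which is exactly what your gluing produces when the intermediate measures are atomic.
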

Here $\varphi_E\parr{\set{p_k}}$ and $L^{-1}$ are two algorithm
parameters that can be made arbitrary small.
The error term $\epsilon_{\mbox{\tiny{\rm unif}}}$
concerns only the approximations made in the discrete uniformizations for each of the two
surfaces, separately; we'll come back to it below -- suffice it to say here
that it is much smaller than the other error terms, in practice.
Finally, it was
proved in \cite{Lipman_Daubechies:2010:polytimesurfcomp} that the
cost function $d^R_{\mu,\nu}(z,w)$ is uniformly continuous on
$\bbar{\D}\times \bbar{\D}$ and therefore
$\omega_{d^R_{\mu,\nu}}\parr{2\max
\parr{\varphi_g(Z),\varphi_h(W)}} \too 0$ as the fill-distances of
the sets $Z,W$ go to zero.

Two comments are in order: first, we believe that the cost function
$d^R_{\mu,\nu}(z,w)$ is Lipschitz rather than just uniformly
continuous. If this is the case, then our analysis guarantees linear
convergence of the algorithm
(leaving aside the $\epsilon_{\mbox{\tiny{\rm unif}}}$ term).
We leave checking the
precise
regularity of
the cost function to future work. Second,
we should discuss in more detail $\epsilon_{\mbox{\tiny{\rm unif}}}$, containing
the errors produced from the
discrete uniformization.
One typically starts from
a piecewise
flat approximation of the surfaces $\M,\N$,
given by
triangle meshes with a very fine mesh size, providing much finer
sampling than the $Z$ or $W$ used in steps 2 through 4.
(For instance, in our numerical
computations, the parameter $L$ and the sample sets $Z,W$ were
picked so that $L^{-1}$, $\varphi_g(Z),\,\varphi_h(W)$ had magnitude
.02,.06,.06 (resp.); the mesh size in the original triangulation of
$\M,\,\N$ is of order .01.) How to
construct
discrete
approximations of the uniformization
of the surfaces, starting from these approximations to $\M,\, \N$, is a research area in its own right,
and several different methods have been proposed
\cite{Gu03,Polthier05,Springborn:2008:CET:1399504.1360676}; in our
work we adopt the approach of \cite{Polthier05,Lipman:2009:MVF}. The
error
$\epsilon_{\mbox{\tiny{\rm unif}}}$
contributed by this component of
our algorithm
is
governed by the difference between the ``true'' $\mu,\nu$ and the
$\mu^{approx},\nu^{approx}$ stemming from the discrete triangulation
approximation, followed by the discrete uniformization, and is
bounded by (using the triangle inequality proved in Theorem 3.11 in
\cite{Lipman_Daubechies:2010:polytimesurfcomp})
$$
\epsilon_{\mbox{\tiny{unif}}} \leq
T_d^R(\mu,\mu^{approx})+T^R_d(\nu,\nu^{approx}).$$
We expect this difference to be proportional to the triangulation
mesh size, and
to be negligible
compared to the errors we analyze explicitly in this paper. Since
the convergence analysis of discrete uniformization has not
settled yet into its final form, and given the much smaller size of this
component of the error (both expected and borne out by numerical
experiments), we have opted here to view the discrete uniformization
as a ``black box'', the analysis of which is outside the scope of
this paper, and to neglect this part of the error. We concern
ourselves here with the error made by our algorithm in the
approximation to $T_d^R(\mu^{approx},\nu^{approx})$, namely
with
$$\Big|T_d^R(\mu^{approx},\nu^{approx})-T_d^R(\mu_Z,\nu_W)\Big|\,.$$

We now turn to the other contributions made by this paper.
In an earlier version of the paper,
Theorem \ref{thm:final_approx_discrete_transportation}
was the main result. Interesting questions and challenges by the reviewers led us to investigate extensions and further mathematical properties of our construction; the results are formulated as two further contributions.

The first of these is a generalization of the framework above to other
types of surfaces. We show how a similar distance $\d^A(\M,\N)$
\nomenclature{$\d^A(\M,\N)$}{The distance between shpere-type (or
disk-type) surfaces $\M$ and $\N$, based on neighborhoods with area
$A$} can be defined for genus-zero, or sphere-type surfaces. This
involves some new ideas, since the absence of a distance function on
the sphere $S^2$ that would be invariant under all conformal maps
from $S^2$ to itself, implies that the definitions of the
neighborhoods $\Omega_{z_0,R}$ cannot simply be copied from the case
for disk-type surfaces.

The final contribution of this paper concerns possible connections
between the distance $\d^R(\M,\N)$ and the notion of geodesic
distortion. Although there is certainly much more to be said upon
this topic than we do here, we do present a first result, showing
that if the distance $\d^R(\M,\N)$ between two disk-like surfaces is
small, then the two surfaces are \emph{locally} near-isometric. More
precisely, we prove the following:
\begin{thm}\label{thm:stability_4.1}
Let $\M$ and $\N$ be differentiable disk-like surfaces. If
$\d^R(\M,\N)$ is sufficiently small, then we can cover $\M$ (minus
an arbitrarily small boundary layer) with patches $\Omega_{z_0,R}$
and define mappings $f^{z_0}:\M\too\N$ (M\"{o}bius transformations)
such that for all $z_1,z_2\in \M$ (not very close to the boundary)
with geodesic distance $d_g(z_1,z_2)\leq r(R)$, $r(R)>0$, there
exists a patch $\Omega_{z_0,R}$ such that
$z_1,z_2\in\Omega_{z_0,R}$, and
\begin{equation}\label{e:lower_bound}
    \parr{1 - C_2 \d^R(\M,\N)^{1/3}} d_g(z_1,z_2)\leq d_h(f^{z_0}(z_1),f^{z_0}(z_2)) \leq \parr{1+C_1 \d^R(\M,\N)^{1/3}}
    d_g(z_1,z_2),
\end{equation}
where $d_g(z_1,z_2),d_h(w_1,w_2)$ are the geodesic distances of
$z_1,z_2\in\M$ and $w_1,w_2\in\N$, respectively, and $C_1,C_2>0$ are
constants independent of the choice of $z_1,z_2$.
\end{thm}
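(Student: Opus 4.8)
The plan is to turn smallness of $\d^R(\M,\N)=T^R_d(\mu,\nu)$ into pointwise information about the optimal cost function $d^R_{\mu,\nu}$ at well-chosen base points, and then to unpack the definition of $d^R_{\mu,\nu}$ to extract the M\"obius maps $f^{z_0}$ together with their metric-distortion bounds. First I would use the structure of the transportation problem: since $\mu,\nu$ are Lipschitz probability densities with respect to $\vol_H$, the optimal plan $\pi$ has full support in a quantitative sense, so for every $z_0\in\D$ (outside an arbitrarily thin boundary layer) there is a point $w_0$ with $(z_0,w_0)$ essentially in the support of $\pi$ and $d^R_{\mu,\nu}(z_0,w_0)$ controlled by $\d^R(\M,\N)$ divided by the $\mu$-mass of a small ball around $z_0$ (which is bounded below, away from the boundary, by Lipschitz continuity and positivity). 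This is where the exponent $1/3$ will ultimately enter: one optimizes the trade-off between the radius of the ball one averages over and the resulting lower bound on its mass, against the radius used in the next step, yielding a power of $\d^R$ strictly less than $1$.

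Next, from $d^R_{\mu,\nu}(z_0,w_0)$ being small I would produce a single M\"obius transformation $m=f^{z_0}$ with $m(z_0)=w_0$ realizing (up to $\eps$) the infimum in \eqref{e:d_mu,nu(z,w)_def}, so that $\int_{\Omega_{z_0,R}}|\mu(z)-(m^*\nu)(z)|\,d\vol_H(z)$ is small. Because $\mu$ and $\nu$ are Lipschitz, a small $L^1$-distance between $\mu$ and $m^*\nu$ on the geodesic disk $\Omega_{z_0,R}$ upgrades, by a standard interpolation/Markov-type argument (an $L^1$ bound plus a Lipschitz bound controls the sup norm on a slightly smaller disk, at the cost of another fractional power), to a uniform bound $|\mu(z)-\nu(m(z))|\le C\,\d^R(\M,\N)^{\alpha}$ for $z$ in $\Omega_{z_0,r(R)}$ with $r(R)<R$. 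Thus on that smaller patch the pulled-back density $m^*\nu$ agrees with $\mu$ up to a small multiplicative error: $\nu(m(z)) = \mu(z)\parr{1+O(\d^R(\M,\N)^{\alpha})}$, using again the lower bound on $\mu$ away from the boundary.

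The metric $g$ on $\M$ in the $\D$-coordinate is $\mu(z)(1-|z|^2)^{-2}\delta_{ij}$ and similarly $h$ is $\nu(w)(1-|w|^2)^{-2}\delta_{ij}$; since $m$ is a hyperbolic isometry, $m^*\brac{(1-|w|^2)^{-2}\delta_{ij}} = (1-|z|^2)^{-2}\delta_{ij}$ exactly, so the pulled-back metric $m^*h$ equals $\nu(m(z))(1-|z|^2)^{-2}\delta_{ij} = \parr{1+O(\d^R(\M,\N)^{\alpha})}\,g$ on $\Omega_{z_0,r(R)}$. A metric that is pointwise within a factor $1\pm\eta$ of $g$ distorts lengths of curves by at most $1\pm\eta$, hence geodesic distances between points of the (geodesically convex, for $r(R)$ small) patch by $1\pm\eta$ as well; this gives \eqref{e:lower_bound} with $f^{z_0}=m$, once $\alpha$ is identified as $1/3$ by the optimization in the first two steps. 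Finally, the covering statement is routine: the patches $\set{\Omega_{z_0,R}}$ over a fill-distance-$\delta$ net of $z_0$'s cover $\M$ minus the boundary layer, and any two points within geodesic distance $r(R)$ of each other lie in a common such patch provided the net is fine enough relative to $R-r(R)$.

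The main obstacle I expect is the passage from the $L^1$ control of $\mu-m^*\nu$ on $\Omega_{z_0,R}$ to a pointwise (sup-norm) control on the sub-patch: this is the step that forces a fractional exponent and requires carefully combining the Lipschitz modulus of $\mu,\nu$ with the mass lower bounds, uniformly as $z_0$ ranges away from $\partial\D$; getting the bookkeeping of the three competing small scales (ball radius for mass, shrink from $R$ to $r(R)$, and the $L^1$ bound itself) to close with the clean exponent $1/3$ is the delicate part, whereas the M\"obius/hyperbolic-isometry algebra and the covering argument are straightforward.
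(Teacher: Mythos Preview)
Your outline is essentially the paper's argument: find a good base pair $(z_0,w_0)$ from the transport bound, extract the minimizing M\"obius map, upgrade the $L^1$ control on $|\mu-\nu\circ m|$ to a pointwise bound via Lipschitz regularity, and then read off the length distortion. Two points of comparison are worth making.

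First, you have misdiagnosed where the exponent $1/3$ comes from. There is no multi-scale trade-off. In the paper the set of admissible centers has $\mu$-measure bounded below by a \emph{fixed} constant depending only on $R$ and the distance to the boundary; from $\int_{B\times\D} d^R_{\mu,\nu}\,d\pi^*\le \eps$ one gets directly $d^R_{\mu,\nu}(z_0,w_0)\le C\eps$ for some $(z_0,w_0)$. The exponent $1/3$ then arises entirely from the $L^1\to L^\infty$ step: if $s(z)=|\mu(z)-\nu(m(z))|$ is Lipschitz with constant $\kappa$ on $\Omega_{z_0,R}$ and $\int_{\Omega_{z_0,R}} s\,d\vol_H\le C\eps$, then for any $u$ with $s(u)=S$ one has $C\eps\ge \int \max(0,S-\kappa|z-u|)\,d\vol_H \ge c\,S^3$, hence $S\le C'\eps^{1/3}$. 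This is just the two-dimensional tent estimate; no shrinking of radii or balancing of scales is needed.

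Second, the paper organizes the choice of $z_0$ a bit differently and more cleanly than your ``small ball around a fixed $z_0$'' step. Given $z_1,z_2$ with $d_g(z_1,z_2)<r$, it sets $B=\{z_0:\gamma_{z_1,z_2}\subset\Omega_{z_0,R}\}$, observes $\Omega_{z_1,R/2}\subset B$ so that $\Vol_\M(B)$ is bounded below uniformly, and only then chooses $z_0\in B$ (together with $w_0$) from the transport inequality. This guarantees from the outset that the $g$-geodesic lies in the patch, which is exactly what is needed to integrate $\|Df\|$ along it for the upper bound; your net-and-cover formulation reaches the same conclusion but requires an extra remark that not just $z_1,z_2$ but the connecting geodesic stays in the common patch.
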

\nomenclature{$d_g(z_1,z_2)$}{The geodesic distance between $z_1$
and $z_2$ based on the metric $g$.}


\subsection{Related work}
The approach taken in this paper is related to the computer graphics
constructions in \cite{Lipman:2009:MVF}, which rely on the
representation of isometries between topologically equivalent
simply-connected surfaces by M\"{o}bius transformations between
their uniformization spaces, and which exploit that 1) the
M\"{o}bius group has small dimensionality (e.g. 3 for disk-type
surfaces and 6 for sphere-type) and 2) changing the metric in one
piece of a surface has little influence on the uniformization of
distant parts. These two observations lead, in
\cite{Lipman:2009:MVF}, to fast and effective algorithms to identify
near-isometries between differently deformed versions of a surface.
In our present context, these same observations lead to a simple
algorithm for surface alignment, reducing it to a linear programming
problem.

Other distances between surfaces have been used recently for several
applications \cite{memoli07}. A prominent mathematical approach to
define distances between surfaces considers the surfaces as special
cases of \emph{metric spaces}, and uses then the Gromov-Hausdorff
(GH) distance between metric spaces \cite{Gromov06}. The GH distance
between metric spaces $X$ and $Y$ is defined through examining all
the isometric embedding of $X$ and $Y$ into (other) metric spaces;
although this distance possesses many attractive mathematical
properties, it is inherently hard computationally
\cite{memoli05,BBK06}. For instance, computing the GH distance is
equivalent to a non-convex quadratic programming problem; solving
this directly for correspondences is equivalent to integer quadratic
assignment, and is thus NP-hard \cite{Cela98}. In addition, the
non-convexity implies that the solution found in practice may be a
local instead of a global minimum, and is therefore not guaranteed
to give the correct answer for the GH distance. The distance between
surfaces as we define in
\cite{Lipman_Daubechies:2010:polytimesurfcomp} does not have these
shortcomings because the computation of the distance between
surfaces using this approach can be recast as a linear program, and
can therefore be implemented using efficient polynomial algorithms
that are moreover guaranteed to converge to the correct solution.

In \cite{memoli07}, the GH distance of \cite{memoli05} is
generalized by introducing a quadratic mass transportation scheme to
be applied to metric spaces equipped with a measure (mm spaces); the
computation of this Gromov-Wasserstein distance for mm spaces is
somewhat easier and more stable to implement than the original GH
distance \cite{memoli07}. A crucial aspect in which our work differs
from \cite{memoli07} is that, in contrast to the (continuous)
quadratic programming method proposed in \cite{memoli07} to compute
the Gromov-Wasserstein distance between mm spaces, our conformal
approach leads to a convex (even linear) problem, solvable via a
linear programming method.

It is worth mentioning that optimal mass transportation has been
used in the engineering literature
as well,
to define interesting metrics
between images; in this context the metric is often called the
Wasserstein distance. The seminal work for this image analysis
approach is the paper by Rubner et al.~\cite{Rubner2000-TEM}, in
which images are viewed as discrete measures, and the distance is
called appropriately the ``Earth Mover's Distance''.

Another related method is presented in the papers of Zeng et al.
\cite{Gu2008_a,Gu2008_b}, which also use the uniformization space to
match surfaces. Our work differs from that of Zeng et al. in that
they use prescribed feature points (defined either by the user or by
extra texture information) to calculate an interpolating harmonic
map between the uniformization spaces, and then define the final
correspondence as a composition of the uniformization maps and this
harmonic interpolant. This procedure is highly dependent on the
prescribed feature points, provided as extra data or obtained from
non-geometric information. In contrast, our work does not use any
prescribed feature points or external data, and makes use of only
the geometry of the surface; in particular we utilize the conformal
structure itself to define deviation from (local) isometry.

\subsection{Organization}
Section
\ref{s:the_discrete_case_implementation} presents the main steps for
the discretization of the continuous case and provides algorithmic
aspects for the alignment procedure. Section
\ref{s:generalization_to_sphere_type} generalizes the method to
sphere-type surfaces. Section \ref{s:stability} provides a
theoretical result connecting our distance directly to local
geodesic distortion. Section \ref{s:examples} presents experimental
validation of the algorithms and concludes;
in particular, we report results of
the method applied to various benchmark data sets and
provide a
comparison to
a state-of-the-art method.
This paper also contains
four appendices: A) contains few approximation results used by our
algorithm, B) contains background on the discrete conformal mapping
we use, C) contains proofs of some properties of the linear program
solution, and D) contains the approximation analysis of the discrete
optimal transport cost to its continuous counterpart.


%

\section{Algorithm for comparing disk-type surfaces and analysis}
\label{s:the_discrete_case_implementation}
Transforming the theoretical framework discussed above into an
algorithm requires several steps of approximation. Our general plan
is to recast the transportation
eq.~(\ref{e:generalized_Kantorovich_transportation}) as a linear
programming problem between discrete measures. The steps of our
algorithm are as follows:\\
  \textit{Preprocess:}  approximating the surfaces' uniformization, \\
  \textit{Step 1:}   discretizing the resulting continuous measures, \\
  \textit{Step 2:}   approximating the cost function $d^R_{\mu,\nu}(\cdot,\cdot)$, \\
  \textit{Step 3:}   solving a linear programming problem to achieve the final
approximation of the distance, and the optimal transportation plan
(correspondences). \\
  \textit{Step 4 (optional):}  extract a consistent set of
  correspondences.\\

In the following we describe in detail each of these steps; we also
provide a convergence analysis for steps 1-3, but not for the
{\em preprocess step}. (As explained in the introduction the convergence of the
approximated uniformization is not the focus of this paper, and we
consider it as a "black box".) For the sake of completeness, and as
a guide to readers who would like to implement the algorithm,
we nevertheless include a description of this
part in Appendix~\ref{a:appendix B}.

\subsection{\emph{Step 1}: Discretizing continuous measures}
\label{ss:discretizing_discrete_measures}

\nomenclature{$Z \ (W)$}{The discrete sets of points used to
discretize the measures $\mu$ ($\nu$).}

In this subsection we
indicate how to construct the discrete
measures $\mu_Z,\nu_W$ \nomenclature{$\mu_Z$\ ($\nu_W$)}{The
discrete measures that approximate $\mu$ ($\nu$) defined by the
discrete set of points $Z$ ($W$).} used in further steps.

Given the measure $\mu=\Vol_\M$ on $\D$, we discretize it by first
distributing $n$ points $Z=\set{z_i}_{i=1}^n$ ``uniformly w.r.t.
$\mu$''. Details on the particular algorithm we used for sampling
are provided in Appendix~\ref{a:appendix B} (we use the same
technique for the sets $Z,W$ described there). For $i=1,\ldots,n$,
we define the sets $\set{\Xi_i}_{i=1}^n$ to be the Voronoi cells
corresponding to $z_i \in Z$ defined by the metric of $\M$; this
gives a partition of $\D$ into disjoint sets, $\D=\cup_{i=1}^n
\Xi_i$. For a more detailed definition of Voronoi cells as-well as
properties of the discrete measures see
Appendix~\ref{a:approximating_optimal_transport}. Next, define the
discrete measure $\mu_Z$ as a superposition of delta measures
localized in the points of $Z$, with weights given by the areas of
$\Xi_i$, i.e.
\begin{equation}\label{e:mu_Z}
    \mu_Z \,=\,\sum_{i=1}^n\,\mu_i \delta_{z_i},
\end{equation}
with $\mu_i:=\Vol_\M(\Xi_i)=\int_{\Xi_i}\mu(z)d\vol_H$. Similarly we
denote by $W=\{w_j\}_{j=1}^p$, $\set{\Upsilon_j}_{j=1}^p$, $\nu_W$,
and $\nu_j:=\Vol_\N(\Upsilon_j)$ the corresponding quantities for
the measure
$\nu=\Vol_\N$.\nomenclature{$\Xi_i$\,($\Upsilon_j$)}{Voronoi cells
on $\D$ based on the point samples $Z$ ($W$) and the metric of the
surface $\M$ ($\N$).} \nomenclature{$\delta_{z_i}$}{Dirac measure
concentrated at point $z_i$.}

We shall always assume that the surfaces $\M$ and $\N$ have the same
area, which, for convenience, we can take to be 1. It then follows
that the discrete measures $\mu_Z$ and $\nu_W$ have equal total mass
(regardless of whether $n=p$ or not). The approximation algorithm
will compute optimal transport for the discrete measures $\mu_Z$ and
$\nu_W$; the corresponding discrete approximation to the distance
between $\M$ and $\N$ is then given by $T^R_d(\mu_Z,\nu_W)$.

The convergence analysis we present will be in terms of the
\emph{fill distance} $\varphi_g(Z),\varphi_h(W)$ defined in the
introduction. Note that our analysis will work with any point sample
sets as long as their fill distances converge to zero.

\subsection{\emph{Step 2}: approximating the cost function $d^R_{\mu,\nu}$.}
In order to approximate $T_d^R(\mu_Z,\nu_W)$ we need to approximate
the cost function $d^R_{\mu,\nu}(z,w)$ between pairs of points
$(z_i,w_j)\in Z\times W$.

Applying (\ref{e:d_mu,nu(z,w)_def}) to the points $z_i$, $w_j$ we
have:
\begin{equation}\label{e:d_mu,nu(z_i,w_j)}
    d^R_{\mu,\nu}(z_i,w_j) = \min_{m(z_i)=w_j}\int_{\Omega_{z_i, R}}\Big|\,\mu(z)-\nu(m(z))\,\Big |\,d\vol_H.
\end{equation}

To obtain $d^R_{\mu,\nu}(z_i,w_j)$ we will thus need to approximate
integrals over hyperbolic disks of radius $R$, which is done via a
separate approximation procedure, set up once and for all in a
preprocessing step at the start of the algorithm.

By using a \Mbs transformation $\widetilde{m}$ such that
$\widetilde{m}(0)=z_0$, and the identity
\[
\int_{\Omega_{z_0,R}}\Big|\,\mu(z) - \nu(m (z))\,\Big| \,d\vol_H(z)
= \int_{\Omega_{0,R}}\Big|\,\mu(\widetilde{m}(u)) - \nu(m \circ
\widetilde{m} (u))\,\Big| \,d\vol_H(u)~,
\]
we can reduce the integrals over the hyperbolic disks
$\Omega_{z_i,R}$ to integrals over a hyperbolic disk $\Omega_{0,R}$
centered around zero.


To approximate the integral of a continuous function $f$ over
$\Omega_{0,R}$ we then use a rectangle-type quadrature
\[
\int_{\Omega_0}\, f(z)\, d\vol_H(z) \approx \sum_k \,\alpha_k f(p_k)
\]
where $p_k\in \Omega_{0,R},\alpha_k \in \R, k=1..K$, are the centers
and coefficients (resp.) of the quadrature. The coefficients
$\alpha_k$ are defined as the hyperbolic area of the Euclidean
Voronoi cells $\Delta_k$ corresponding to the centers $p_k$.

We thus have the following approximation:
\begin{eqnarray}
d^R_{\mu,\nu}(z_i,w_j)&=&
\min_{m(z_i)=w_j}\int_{\Omega_{z_i,R}}\Big|\,\mu(z) - \nu(m
(z))\,\Big|
\,d\vol_H(z)\nonumber\\
&=& \min_{m(z_i)=w_j}\int_{\Omega_{0,R}}\Big|\,\mu(\widetilde{m}_i
(z)) -
\nu(m(\widetilde{m}_i (z)))\,\Big|\,d\vol_H(z)\nonumber\\
&\approx&   \min_{m(z_i)=w_j}  \sum_k \,\alpha_k
\,\Big|\,\mu(\widetilde{m}_i (p_k)) - \nu(m(\widetilde{m}_i
(p_k)))\,\Big|~,\label{e:discrete_approx__d_mu_nu(z_i,w_j)}
\end{eqnarray}
where the \Mbs transformations $\widetilde{m}_i$, mapping 0 to
$z_i$, are selected as soon as the $z_i$ themselves have been
picked, and remain the same throughout the remainder of the
algorithm.

Let us denote this approximation by $$\wh{d}^R_{\mu,\nu}(z_i,w_j) =
\min_{m(z_i)=w_j}\sum_k \,\alpha_k \,\left|\,\mu(\widetilde{m}_i
(p_k)) - \nu(m(\widetilde{m}_i (p_k)))\,\right|.$$

It can be shown that picking a set of centers $\set{p_k}$ with
Euclidean fill-distance $\varphi_E(\set{p_k}) = h>0$ (that is, we
use the Euclidean metric to define the fill distance of the set
$\set{p_k}$) leads to an $O(h)$ approximation; in
Appendix~\ref{a:appendix A} we prove:
\begin{thm}\label{thm:convergence_of_numerical_quadrature}
For Lipschitz continuous $\mu,\nu$,
\[
\abs{d^R_{\mu,\nu}(z_i,w_j)- \wh{d}^R_{\mu,\nu}(z_i,w_j)}\leq
C\,\varphi_E\left( \set{p_k} \right)~,
\]
where the constant $C$ depends only on $\mu,\nu,R$.
\end{thm}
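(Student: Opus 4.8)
The plan is to compare $d^R_{\mu,\nu}(z_i,w_j)$ and $\wh d^R_{\mu,\nu}(z_i,w_j)$ by bounding, uniformly in the M\"obius transformation $m$ over which we take the infimum, the difference between the exact integral and its quadrature approximation. Since both quantities are of the form $\inf_{m(z_i)=w_j} F(m)$ and $\inf_{m(z_i)=w_j} \wh F(m)$, with $F(m) = \int_{\Omega_{0,R}} |\mu(\wt m_i(z)) - \nu(m(\wt m_i(z)))|\,d\vol_H(z)$ and $\wh F(m)$ the corresponding Riemann sum $\sum_k \alpha_k |\mu(\wt m_i(p_k)) - \nu(m(\wt m_i(p_k)))|$, the key elementary observation is that $|\inf_m F(m) - \inf_m \wh F(m)| \leq \sup_m |F(m) - \wh F(m)|$. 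So it suffices to bound $|F(m) - \wh F(m)|$ by $C\,\varphi_E(\{p_k\})$ with a constant $C$ independent of $m$ (and of $i,j$).

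The main work is then a quadrature error estimate on the \emph{fixed} domain $\Omega_{0,R}$. Write $\psi_m(z) := |\mu(\wt m_i(z)) - \nu(m(\wt m_i(z)))|$; this is the integrand, and $\wh F(m) = \sum_k \alpha_k\,\psi_m(p_k)$ where $\alpha_k = \vol_H(\Delta_k)$ and $\{\Delta_k\}$ is the Euclidean Voronoi partition of $\Omega_{0,R}$ generated by $\{p_k\}$. Since $\sum_k \alpha_k = \vol_H(\Omega_{0,R})$, we have
\[
F(m) - \wh F(m) = \sum_k \int_{\Delta_k} \bigl(\psi_m(z) - \psi_m(p_k)\bigr)\,d\vol_H(z),
\]
so $|F(m) - \wh F(m)| \leq \sum_k \int_{\Delta_k} |\psi_m(z) - \psi_m(p_k)|\,d\vol_H(z)$. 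The plan is to show $\psi_m$ is Lipschitz on $\Omega_{0,R}$ with a Lipschitz constant bounded independently of $m$. First, $|\psi_m(z) - \psi_m(z')| \leq |\mu(\wt m_i(z)) - \mu(\wt m_i(z'))| + |\nu(m(\wt m_i(z))) - \nu(m(\wt m_i(z')))|$ by the reverse triangle inequality. Now $\mu,\nu$ are Lipschitz (w.r.t.\ the hyperbolic metric) by hypothesis, and $\wt m_i$, $m\circ\wt m_i$ are hyperbolic isometries, hence $1$-Lipschitz from $(\D,\vol_H)$ to itself; composing, $\psi_m$ is Lipschitz with constant at most $\mathrm{Lip}(\mu) + \mathrm{Lip}(\nu)$ \emph{with respect to the hyperbolic distance on} $\Omega_{0,R}$, uniformly in $m$. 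Since $\Omega_{0,R}$ is a fixed compact subset of $\D$, hyperbolic and Euclidean distances on it are comparable up to a constant $c_R$ depending only on $R$, so $\psi_m$ is also Euclidean-Lipschitz with constant $c_R(\mathrm{Lip}(\mu)+\mathrm{Lip}(\nu))$. For each Voronoi cell $\Delta_k$ and each $z\in\Delta_k$ the Euclidean diameter is at most $2\varphi_E(\{p_k\})$ by definition of the Voronoi partition and the fill distance (every point of $\Omega_{0,R}$ lies within $\varphi_E(\{p_k\})$ of some $p_\ell$, forcing $|z - p_k| \leq 2\varphi_E(\{p_k\})$ whenever $z\in\Delta_k$), so $|\psi_m(z) - \psi_m(p_k)| \leq 2c_R(\mathrm{Lip}(\mu)+\mathrm{Lip}(\nu))\,\varphi_E(\{p_k\})$. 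Summing over $k$ and using $\sum_k \vol_H(\Delta_k) = \vol_H(\Omega_{0,R})$, which depends only on $R$, gives
\[
|F(m) - \wh F(m)| \leq 2c_R\,\vol_H(\Omega_{0,R})\,(\mathrm{Lip}(\mu)+\mathrm{Lip}(\nu))\,\varphi_E(\{p_k\}) =: C\,\varphi_E(\{p_k\}),
\]
with $C$ depending only on $\mu,\nu,R$. Taking the supremum over $m$ and then the infimum in the definition of the cost functions yields the claim.

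The step I expect to need the most care is verifying that the Lipschitz constant of $\psi_m$ genuinely does not degenerate as $m$ ranges over all M\"obius transformations with $m(z_i) = w_j$ — the point is that $\nu\circ(m\circ\wt m_i)$ has Lipschitz constant controlled by $\mathrm{Lip}(\nu)$ alone because $m\circ\wt m_i$ is a hyperbolic isometry regardless of how ``wild'' $m$ is, so the potential blow-up never happens; but one must be careful that the relevant Lipschitz bound for $\nu$ is the one with respect to hyperbolic distance on all of $\D$ (not just on $\Omega_{0,R}$), since $m$ can map $\Omega_{0,R}$ anywhere in $\D$. A minor secondary point is justifying the Euclidean-vs-hyperbolic comparison constant $c_R$ on $\Omega_{0,R} = \{|z| \leq \tanh R\}$, which is immediate since $(1-|z|^2)^{-1}$ is bounded above and below by constants depending only on $R$ there.
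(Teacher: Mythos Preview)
Your proof is correct and follows the same overall skeleton as the paper's: reduce to a uniform bound $\sup_m |F(m)-\wh F(m)|$, decompose the quadrature error over the Voronoi cells $\Delta_k$, and control the integrand's oscillation on each cell via a Lipschitz bound that is uniform in $m$. The difference lies in how that uniform Lipschitz bound is obtained. The paper proves a separate lemma (Lemma~\ref{lem:mobius_is_lipschitz}) giving an explicit Euclidean Lipschitz constant $\frac{1-|a|^2}{(1-r_R|a|)^2}\le (1-r_R)^{-2}$ for any $m\in\Md$ restricted to $\Omega_{0,R}$, and then composes this with the (Euclidean) Lipschitz constants $C_\mu,C_\nu$ of $\mu,\nu$ on $\D$. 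You instead exploit that every $m\in\Md$ is a hyperbolic isometry, so $\nu\circ m\circ\wt m_i$ inherits the \emph{hyperbolic} Lipschitz constant of $\nu$ with no dependence on $m$ at all, and only afterwards pass to the Euclidean metric via the fixed comparison constant $c_R$ on the compact set $\Omega_{0,R}$. Your route is more conceptual and sidesteps the computation in the paper's lemma; it also makes transparent \emph{why} nothing blows up as $m$ varies. One small point: the paper takes ``Lipschitz'' to mean Euclidean-Lipschitz on $\D$ (as its proof shows), whereas you use hyperbolic-Lipschitz; since $|z-w|\le d_H(z,w)$ on $\D$, Euclidean-Lipschitz implies hyperbolic-Lipschitz with the same constant, so your argument goes through under the paper's hypothesis, and in fact under the weaker hyperbolic-Lipschitz hypothesis as well.
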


In practice, the minimization over $M_{\D,z_i,w_j}$ (the set of all
\Mbs transformations that map $z_i$ to $w_k$) in the computation of
$\wh{d}^R_{\mu,\nu}$ is discretized as well: instead of minimizing
over all $M_{\D,z_i,w_j}$, we minimize over only the \Mbs
transformations
$\left(m_{z_i,w_j,2\pi\ell/L}\right)_{\ell=0,1,..,L-1}$, defined by
\begin{equation}\label{e:def_m_zi_wi_2pi l_div_L}
    m_{z_i,w_j,2\pi\ell/L} = \wt{m}_j \circ \mathfrak{R}_\ell \circ \wt{m}_i^{-1},
\end{equation}
with $\wt{m}_i$ as defined above, $\mathfrak{R}_\ell(z)=e^{\bfi
2\pi\ell/L}z$ , $L$ a parameter that reflects how many points we use
to discretize $[0,2\pi)$, and $\wt{m}_j \in \Md$ an arbitrary but
fixed \Mbs map that takes $0$ to $w_j$.

%

Taking this into account as well, we have thus
\begin{eqnarray}
d^R_{\mu,\nu}(z_i,w_j)\approx \hh{d}^R_{\mu,\nu}(z_i,w_j) :=
\min_{\ell=1..L} \sum_k \,\alpha_k \,\Big|\,\mu(\widetilde{m}_i
(p_k)) - \nu(m_{z_i,w_j,2\pi \ell/L}(\widetilde{m}_i
(p_k)))\,\Big|~;\label{e:discrete_approx__d_mu_nu(z_i,w_j)_bis}
\end{eqnarray}
as we prove in Appendix~\ref{a:appendix A} the error made in
approximation (\ref{e:discrete_approx__d_mu_nu(z_i,w_j)_bis}) is

\begin{thm}\label{thm:final_approx_to_d^R_mu,nu}
For Lipschitz continuous $\mu,\nu$,
\[
\abs{\,d^R_{\mu,\nu}(z_i,w_j)- \hh{d}^R_{\mu,\nu}(z_i,w_j)} \leq
C_1\,\varphi_E\left( \set{p_k} \right) + C_2 L^{-1}~,
\]
where the constants $C_1,C_2$ depends only on $\mu,\nu,R$.
\end{thm}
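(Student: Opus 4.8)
The plan is to prove the estimate by inserting the intermediate quantity $\wh d^R_{\mu,\nu}(z_i,w_j)$ — the approximation that already discretizes the integral but still minimizes over \emph{all} admissible \Mbs maps — and using the triangle inequality
\[
\abs{d^R_{\mu,\nu}(z_i,w_j)-\hh d^R_{\mu,\nu}(z_i,w_j)}\ \le\ \abs{d^R_{\mu,\nu}(z_i,w_j)-\wh d^R_{\mu,\nu}(z_i,w_j)}+\abs{\wh d^R_{\mu,\nu}(z_i,w_j)-\hh d^R_{\mu,\nu}(z_i,w_j)}.
\]
The first term is bounded by $C\,\varphi_E(\set{p_k})$ with $C=C(\mu,\nu,R)$ — this is exactly Theorem~\ref{thm:convergence_of_numerical_quadrature}. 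Hence everything comes down to showing $\abs{\wh d^R_{\mu,\nu}(z_i,w_j)-\hh d^R_{\mu,\nu}(z_i,w_j)}\le C_2 L^{-1}$, i.e.\ that replacing the infimum over $\set{m\in\Md:m(z_i)=w_j}$ in $\wh d$ by the minimum over the finite family $(m_{z_i,w_j,2\pi\ell/L})_{\ell}$ in $\hh d$ costs at most $O(L^{-1})$.

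One inequality is free: each $m_{z_i,w_j,2\pi\ell/L}=\wt m_j\circ\mathfrak R_\ell\circ\wt m_i^{-1}$ sends $z_i\mapsto 0\mapsto 0\mapsto w_j$ (since $\mathfrak R_\ell$ fixes the origin), so $\hh d$ is the very same quadrature functional as $\wh d$, only minimized over a \emph{subset} of the admissible maps; thus $\hh d^R_{\mu,\nu}(z_i,w_j)\ge\wh d^R_{\mu,\nu}(z_i,w_j)$. For the reverse bound, fix $\eps>0$ and choose $m^\ast\in\Md$ with $m^\ast(z_i)=w_j$ realizing $\wh d^R_{\mu,\nu}(z_i,w_j)$ to within $\eps$. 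The structural observation is that the stabilizer of the origin in $\Md$ is precisely the rotation group $\set{z\mapsto e^{\bfi\vartheta}z}$; therefore $\wt m_j^{-1}\circ m^\ast\circ\wt m_i$ fixes $0$, hence is a rotation, and $m^\ast=\wt m_j\circ(z\mapsto e^{\bfi\vartheta}z)\circ\wt m_i^{-1}$ for some $\vartheta\in[0,2\pi)$ — the same sandwich form as $m_{z_i,w_j,2\pi\ell/L}$. Now pick an index $\ell$ whose sample angle $2\pi\ell/L$ lies within $\pi/L$ of $\vartheta$ on the circle.

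With this $\ell$ I compare the two quadrature sums node by node using $\abs{a-b'}\le\abs{a-b}+\abs{b-b'}$ with $a=\mu(\wt m_i(p_k))$, $b=\nu(m^\ast(\wt m_i(p_k)))$, $b'=\nu(m_{z_i,w_j,2\pi\ell/L}(\wt m_i(p_k)))$; the $a$-term is common to $\wh d$ and $\hh d$ and cancels, giving
\[
\hh d^R_{\mu,\nu}(z_i,w_j)\ \le\ \sum_k\alpha_k\abs{a-b'}\ \le\ (\wh d^R_{\mu,\nu}(z_i,w_j)+\eps)+\sum_k\alpha_k\abs{b-b'}.
\]
Since $m^\ast(\wt m_i(p_k))=\wt m_j(e^{\bfi\vartheta}p_k)$ and $m_{z_i,w_j,2\pi\ell/L}(\wt m_i(p_k))=\wt m_j(e^{\bfi 2\pi\ell/L}p_k)$, and $\wt m_j\in\Md$ is a hyperbolic isometry (while $d_E\le d_H$ on $\D$, writing $d_H,d_E$ for the hyperbolic and Euclidean distances), Lipschitz continuity of $\nu$ yields $\abs{b-b'}\le\mathrm{Lip}(\nu)\,d_H(e^{\bfi\vartheta}p_k,e^{\bfi 2\pi\ell/L}p_k)$. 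Finally, the circular arc from $e^{\bfi\vartheta}p_k$ to $e^{\bfi 2\pi\ell/L}p_k$ has Euclidean length $\abs{p_k}\,\abs{\vartheta-2\pi\ell/L}\le\tanh R\cdot\pi/L$ and stays on $\set{\abs z=\abs{p_k}}\subset\set{\abs z\le\tanh R}$, where $(1-\abs z^2)^{-1}\le(1-\tanh^2 R)^{-1}$; hence $d_H(e^{\bfi\vartheta}p_k,e^{\bfi 2\pi\ell/L}p_k)\le\pi\tanh R\,(1-\tanh^2 R)^{-1}L^{-1}$, uniformly in $k$. Summing and using $\sum_k\alpha_k=\vol_H(\Omega_{0,R})$ gives $\sum_k\alpha_k\abs{b-b'}\le C_2 L^{-1}$ with $C_2:=\pi\,\mathrm{Lip}(\nu)\,\tanh R\,(1-\tanh^2 R)^{-1}\vol_H(\Omega_{0,R})$, which depends only on $\nu,R$. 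Letting $\eps\downarrow 0$ and combining with $\hh d\ge\wh d$ gives $\abs{\wh d-\hh d}\le C_2 L^{-1}$; together with the first term this proves the theorem, with $C_1=C$.

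The step I expect to be the genuine (if minor) obstacle is not any single estimate but the two bookkeeping points that make the cancellation clean: (i) the representation $m^\ast=\wt m_j\circ(z\mapsto e^{\bfi\vartheta}z)\circ\wt m_i^{-1}$ of an arbitrary admissible \Mbs map, which rests on the one-parameter (rotational) structure of the point-stabilizer in $\Md$; and (ii) the \emph{uniform-in-$k$} control $d_H(e^{\bfi\vartheta}p_k,e^{\bfi 2\pi\ell/L}p_k)=O(L^{-1})$, which works precisely because every node $p_k$ lies in $\Omega_{0,R}=\set{\abs z\le\tanh R}$, a set compactly contained in $\D$, so hyperbolic and Euclidean distances there are comparable with constants depending only on $R$. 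One should also check that the quadrature weights satisfy $\sum_k\alpha_k=\vol_H(\Omega_{0,R})$ (or are at least uniformly bounded in total), which is immediate from their definition as hyperbolic areas of a Voronoi partition of $\Omega_{0,R}$.
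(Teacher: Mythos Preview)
Your proof is correct and follows essentially the same approach as the paper: both split via the intermediate $\wh d^R_{\mu,\nu}$, invoke Theorem~\ref{thm:convergence_of_numerical_quadrature} for the quadrature error, use the rotational parametrization $m^\ast=\wt m_j\circ(z\mapsto e^{\bfi\vartheta}z)\circ\wt m_i^{-1}$ of the admissible \Mbs maps, pick the nearest sample angle, and bound the resulting $\nu$-difference via Lipschitz continuity. The only minor technical difference is that the paper controls $\abs{\wt m_j(e^{\bfi\vartheta}p_k)-\wt m_j(e^{\bfi 2\pi\ell/L}p_k)}$ by applying the explicit Euclidean Lipschitz bound of Lemma~\ref{lem:mobius_is_lipschitz} (yielding a factor $(1-r_R)^{-2}$), whereas you use the hyperbolic-isometry property of $\wt m_j$ together with $d_E\le d_H$ and the comparison of hyperbolic and Euclidean metrics on $\Omega_{0,R}$; both routes give a constant depending only on $\nu,R$.
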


\subsection{\emph{Step 3}: Solving a linear program.}
\label{ss:step_3_linear_programming}

We now have in place all the ingredients to formulate the final
linear programming problem, the solution of which approximates the
distance $\d^R(\M,\N)$. The final step is to solve a discrete
optimal transportation problem between the discrete measures $\mu_Z$
and $\nu_W$ with the approximated cost function
$\hh{d}^R_{\mu,\nu}(z_i,w_j)$:
\begin{equation}\label{e:discrete_kantorovich}
  \sum_{i,j}\hh{d}^R_{ij}\pi_{ij} \rightarrow \min
\end{equation}
\begin{equation}\label{e:discrete_kantorovich_CONSTRAINTS}
\begin{array}{l}
  \left \{
  \begin{array}{l}
    \sum_i \pi_{ij} = \nu_j \\
    \sum_j \pi_{ij} = \mu_i \\
    \pi_{ij} \geq 0
  \end{array}
  \right . ,
\end{array}
\end{equation}
where $\mu_i=\mu(\Xi_i)$ and $\nu_j=\nu(\Upsilon_j)$, and
$\hh{d}^R_{ij} = \hh{d}^R_{\mu,\nu}(z_i,w_j)$.

The optimal transportation plan $\pi^*$ then furnishes our final
approximation: $T_{\hh{d}}(\mu_Z,\nu_W) = \sum_{ij}
\hh{d}^R_{ij}\pi^*_{ij}$. The approximation result will be expressed
in terms of the modulus of continuity of our cost function:
$\omega_{d^R_{\mu,\nu}}$. Our result will use the following
regularity theorem of mass transportation, proved in Appendix
\ref{a:approximating_optimal_transport},
\begin{thm}\label{t:convergence_optimal_cost}
Suppose $c:\X\times\Y \To \Real_+$ is a continuous function, with
$\X,\Y$ compact complete separable metric spaces, $S$ and $T$ are
sample sets in $\X,\Y$ (resp.), $\mu,\nu$ are probability measures
on $\X,\Y$.
\\(A) if $c$ is uniformly
continuous then $$\T_c(\mu_S, \nu_T) \too \T_c(\mu,\nu), \ \ \ as \
h \too 0,$$\\ (B) if $c$ is Lipschitz continuous with a constant
$\lambda$, then
$$\abs{\T_c(\mu,\nu) - \T_c(\mu_S,\nu_T) } < 2\lambda h,$$
where, $h=\max \set{\varphi_\X(S),\varphi_\Y(T)}$, and $\mu_S,\nu_T$
are as defined similarly to (\ref{e:mu_Z}). (See
Appendix~\ref{a:approximating_optimal_transport} for precise
definition.)
\end{thm}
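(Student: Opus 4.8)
The plan is to construct, in each direction, an explicit transport plan linking the continuous problem to the discretized one, and to bound the resulting change in cost through the (uniform) continuity of $c$. Write $S=\set{s_i}$, $T=\set{t_j}$ (finite), let $\set{A_i}$ and $\set{B_j}$ be the Voronoi partitions of $\X$ and $\Y$ attached to $S$ and $T$ --- with ties broken by the index, so the cells are Borel --- and set $\mu_S=\sum_i \mu(A_i)\,\delta_{s_i}$, $\nu_T=\sum_j \nu(B_j)\,\delta_{t_j}$, in analogy with (\ref{e:mu_Z}). The single geometric input is that the nearest-point projections $P_S:\X\too S$ and $P_T:\Y\too T$ (each constant on every cell) displace points by no more than the fill distance: $d_\X(x,P_S x)=d_\X(x,S)\le\varphi_\X(S)\le h$ for all $x$, and likewise $d_\Y(y,P_T y)\le h$, so that $d_\X(x,P_S x)+d_\Y(y,P_T y)\le 2h$ everywhere. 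Let $\omega_c$ be the modulus of continuity of $c$ for the sum metric on $\X\times\Y$, so that $\lambda$-Lipschitz continuity means $\omega_c(t)\le\lambda t$.

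For the bound $\T_c(\mu_S,\nu_T)\le\T_c(\mu,\nu)+\omega_c(2h)$ I would take a near-optimal $\pi\in\Pi(\mu,\nu)$ and push it forward by $P_S\times P_T$: the image lies in $\Pi(\mu_S,\nu_T)$, and its cost is $\int c(P_S x,P_T y)\,d\pi\le\int\parr{c(x,y)+\omega_c(2h)}\,d\pi=\T_c(\mu,\nu)+\omega_c(2h)$.

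The reverse inequality is where the real work lies, and it is the step I expect to be the main obstacle: one must \emph{disaggregate} an optimal plan $\pi^\ast=(\pi^\ast_{ij})$ for $\T_c(\mu_S,\nu_T)$ --- a finite array with $\pi^\ast_{ij}\ge0$, $\sum_j \pi^\ast_{ij}=\mu(A_i)$, $\sum_i \pi^\ast_{ij}=\nu(B_j)$ --- back into a coupling of the continuous measures. Here I would use the gluing
\[
\pi\;=\;\sum_{i,j\,:\,\mu(A_i)\nu(B_j)>0}\frac{\pi^\ast_{ij}}{\mu(A_i)\,\nu(B_j)}\;\parr{\mu|_{A_i}}\otimes\parr{\nu|_{B_j}},
\]
noting that $\pi^\ast_{ij}=0$ whenever $\mu(A_i)=0$ or $\nu(B_j)=0$, so nothing is lost. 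A short verification shows $\pi\in\Pi(\mu,\nu)$ and $(P_S\times P_T)_*\pi=\pi^\ast$; in particular $\pi$ moves mass only within cell-pairs $(A_i,B_j)$ with $(s_i,t_j)$ in the support of $\pi^\ast$, so the $2h$ displacement bound applies termwise and $\T_c(\mu,\nu)\le\int c\,d\pi\le\sum_{i,j}\pi^\ast_{ij}\parr{c(s_i,t_j)+\omega_c(2h)}=\T_c(\mu_S,\nu_T)+\omega_c(2h)$.

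Combining the two estimates gives $\abs{\T_c(\mu,\nu)-\T_c(\mu_S,\nu_T)}\le\omega_c(2h)$, from which (B) follows at once via $\omega_c(2h)\le 2\lambda h$, and (A) follows because $c$, being continuous on the compact space $\X\times\Y$, is uniformly continuous, so $\omega_c(2h)\too 0$ as $h\too 0$. Beyond setting up the disaggregation cleanly, the only thing to watch is measurability --- of the Voronoi cells and of the glued plan $\pi$ --- which is routine here since $S,T$ are finite: tie-breaking by a fixed index order makes the cells Borel, and the displayed $\pi$ is patently Borel. (I would also check separately that an optimizer for the continuous problem exists, or simply work with $\eps$-optimal plans and let $\eps\too 0$ in the upper-bound step.)
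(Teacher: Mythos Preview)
Your argument is correct and essentially identical to the paper's: the pushforward $(P_S\times P_T)_*\pi$ is exactly the paper's $\Lambda=\sum_{i,j}\pi(V_i\times W_j)\,\delta_{s_i,t_j}$, and your disaggregation formula coincides verbatim with the paper's construction $\pi=\sum_{i,j}\frac{\Lambda(V_i\times W_j)}{\mu(V_i)\nu(W_j)}\,\mu\times\nu\big|_{V_i\times W_j}$. The only cosmetic differences are that the paper bounds the full Hausdorff distance between the two sets of achievable costs (slightly stronger than the infimum comparison you need) and handles the Voronoi-cell diameter via an $\epsilon$-lemma rather than your direct observation $d_\X(x,S)\le\varphi_\X(S)$; your treatment of the zero-measure cells and measurability is in fact more careful than the paper's.
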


Our main approximation result is as follows:

\begin{thm}\label{thm:final_approx_discrete_transportation}
Let $\mu,\nu$ be Lipschitz continuous probability densities (w.r.t.
the hyperbolic measure) defined over $\D$. Let $\M,\N$ be the
disk-type surfaces defined by the metric tensors
$g=\mu(z)\,(1-|z|^2)^{-2} \delta_{ij} dx^i\otimes dx^j, h=\nu(w)\,
(1-|w|^2)^{-2} \delta_{ij} dy^i\otimes dy^j$ (resp.). Let $\pi^*$ be
the minimizer of the linear program defined by
\textrm{(\ref{e:discrete_kantorovich})-(\ref{e:discrete_kantorovich_CONSTRAINTS})},
then
$$\Big | \d^R(\M,\N) - T_{\hh{d}}(\mu_Z,\nu_W)  \Big | \leq
\omega_{d^R_{\mu,\nu}}\parr{2\max \parr{\varphi_g(Z),\varphi_h(W)}}
+ C_1\varphi_E  \parr{\set{p_k}} + C_2 \, L^{-1},$$ where
$\omega_{d^R_{\mu,\nu}}$ denotes the modulus of continuity of the
function $d^R_{\mu,\nu}$, $C_1,C_2$ are constants dependent only
upon $\mu,\nu,R$.
\end{thm}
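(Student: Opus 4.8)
The plan is to chain the two approximation results already in hand --- Theorem~\ref{thm:final_approx_to_d^R_mu,nu} for the cost function and Theorem~\ref{t:convergence_optimal_cost} for the transportation functional --- through a single triangle inequality whose intermediate quantity is $T^R_d(\mu_Z,\nu_W)$, the optimal transport cost between the \emph{discrete} measures computed with the \emph{exact} cost $d^R_{\mu,\nu}$. Recalling $\d^R(\M,\N)=T^R_d(\mu,\nu)$ from~(\ref{e:the_distance}), I write
\[
\Big|\,\d^R(\M,\N) - T^R_{\hh{d}}(\mu_Z,\nu_W)\,\Big| \;\le\; \Big|\,T^R_d(\mu,\nu) - T^R_d(\mu_Z,\nu_W)\,\Big| \;+\; \Big|\,T^R_d(\mu_Z,\nu_W) - T^R_{\hh{d}}(\mu_Z,\nu_W)\,\Big|,
\]
and bound the two terms on the right separately.

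For the first term I would apply Theorem~\ref{t:convergence_optimal_cost} with $\X=\M$, $\Y=\N$ carrying the metrics $g,h$ (so that the relevant metrics are the surface distances $d_\M,d_\N$), $S=Z$, $T=W$, $\mu=\Vol_\M$, $\nu=\Vol_\N$, and cost $c(z,w)=d^R_{\mu,\nu}(z,w)$ transported to $\M\times\N$ via uniformization. By the companion paper this $c$ is uniformly continuous on the compact $\bbar{\D}\times\bbar{\D}$, and its modulus of continuity with respect to $d_\M+d_\N$ is exactly $\omega_{d^R_{\mu,\nu}}$. Since $c$ is only known to be uniformly continuous, part~(B) of that theorem does not apply verbatim, but the proof of part~(A) in Appendix~\ref{a:approximating_optimal_transport} in fact yields the quantitative rate $\omega_{d^R_{\mu,\nu}}\parr{2\max(\varphi_g(Z),\varphi_h(W))}$: one pushes the continuous optimal plan forward under the nearest-sample (Voronoi) maps $z\mapsto z_i$ and $w\mapsto w_j$ --- whose pushforwards of $\Vol_\M,\Vol_\N$ are \emph{exactly} $\mu_Z,\nu_W$, so the resulting plan is feasible for the discrete problem --- and conversely spreads an optimal discrete plan over the product cells $\Xi_i\times\Upsilon_j$ weighted by $\mu|_{\Xi_i}\otimes\nu|_{\Upsilon_j}$; in either direction a pair $(z,w)$ is displaced by at most $d_\M(z,z_i)+d_\N(w,w_j)\le\varphi_g(Z)+\varphi_h(W)\le 2\max(\varphi_g(Z),\varphi_h(W))$, so the integrand changes by at most $\omega_{d^R_{\mu,\nu}}$ evaluated there.

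For the second term I use that the minimizer $\pi^*$ of the linear program~(\ref{e:discrete_kantorovich}) for cost $\hh{d}$ and the minimizer of the analogous program for cost $d^R_{\mu,\nu}$ are both feasible couplings of $\mu_Z$ and $\nu_W$ of total mass $1$ (standing normalization $\Vol_\M(\M)=\Vol_\N(\N)=1$). Theorem~\ref{thm:final_approx_to_d^R_mu,nu} gives $\abs{d^R_{\mu,\nu}(z_i,w_j)-\hh{d}^R_{\mu,\nu}(z_i,w_j)}\le C_1\varphi_E(\set{p_k})+C_2L^{-1}$ uniformly in $i,j$; substituting one cost for the other in each of the two optimal values and paying at most this uniform error times the transported mass $1$ yields $\abs{T^R_d(\mu_Z,\nu_W)-T^R_{\hh{d}}(\mu_Z,\nu_W)}\le C_1\varphi_E(\set{p_k})+C_2L^{-1}$. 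Adding the two estimates gives the claimed inequality.

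The main obstacle is the first term: upgrading the bare convergence statement of Theorem~\ref{t:convergence_optimal_cost}(A) to the explicit rate $\omega_{d^R_{\mu,\nu}}\parr{2\max(\varphi_g(Z),\varphi_h(W))}$ requires executing the two coupling constructions carefully and, in particular, verifying that the Voronoi-weighted measures $\mu_Z,\nu_W$ are precisely the nearest-point pushforwards of $\Vol_\M,\Vol_\N$ (so the transported plans are genuinely, not approximately, feasible), together with the elementary facts that $d_\M(z,z_i)\le\varphi_g(Z)$ for $z\in\Xi_i$ and that $\omega_{d^R_{\mu,\nu}}$ is nondecreasing. Everything else is routine: the continuous optimal plan exists by compactness of $\bbar{\D}\times\bbar{\D}$ and continuity of the cost, and the perturbation estimate for the second term is purely arithmetic.
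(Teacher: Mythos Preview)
Your proposal is correct and follows essentially the same approach as the paper: the same triangle-inequality split through $T^R_d(\mu_Z,\nu_W)$, the same Voronoi pushforward/spreading constructions from Appendix~\ref{a:approximating_optimal_transport} for the first term (the paper cites this as Theorem~\ref{thm:main_theorem} rather than extracting the rate from the proof of Theorem~\ref{t:convergence_optimal_cost}(A), but these are the same argument), and the same cross-substitution of optimal plans combined with Theorem~\ref{thm:final_approx_to_d^R_mu,nu} for the second term.
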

\begin{proof}
First,
\begin{align*}
\Big | \d^R(\M,\N) - T_{\hh{d}}(\mu_Z,\nu_W) \Big | & \leq \Big |
\d^R(\M,\N) - T_d(\mu_Z,\nu_W) \Big | + \Big | T_{d}(\mu_Z,\nu_W) -
T_{\hh{d}}(\mu_Z,\nu_W) \Big | \\ & = I + II.
\end{align*}
where $T_d(\mu_Z,\nu_W)$ is the optimal transport cost between the
discrete measures $\mu_z,\nu_W$ using the exact cost function
$d^R_{\mu,\nu}(z_i,w_j)$.

In Appendix \ref{a:approximating_optimal_transport} (Theorem
\ref{thm:main_theorem}) we prove that
$$ I \leq \omega_{d^R_{\mu,\nu}}(2
\max\set{\varphi_g(Z),\varphi_h(W)});$$ the result is proved in the
more general context of compact complete separable metric spaces; we
believe this result may be useful, independently of the remainder of
this paper, to approximate optimal transport cost in more general
contexts (see Appendix \ref{a:approximating_optimal_transport} for
more details).

To bound $II$, denote by $\pi'_{ij}$ the optimal plan in
$T_{\hh{d}}(\mu_Z,\nu_W)$, then,
\begin{align*}
T_{d}(\mu_Z,\nu_W) - T_{\hh{d}}(\mu_Z,\nu_W) & = \inf_\pi
\sum_{i,j}\pi_{i,j} d^R_{i,j} -  \sum_{i,j}\pi'_{i,j} \hh{d}^R_{i,j}
\\ & \leq \sum_{i,j} \pi'_{i,j} \parr{d^R_{i,j} - \hh{d}^R_{i,j}} \\
& \leq C_1 \, \varphi_E(\set{p_k}) + C_2 \, L^{-1},
\end{align*}
where in the last inequality we used Theorem
\ref{thm:final_approx_to_d^R_mu,nu}. The symmetric inequality can be
achieved similarly. This completes the proof.
\end{proof}

It is proved in \cite{Lipman_Daubechies:2010:polytimesurfcomp} that
$d^R_{\mu,\nu}(\cdot,\cdot)$ is uniformly continuous on
$\bbar{\D}\times \bbar{\D}$. Therefore, the above theorem implies
convergence of our discrete approximation. More specifically, our
approximation will converge like the modulus of continuity of
$d^R_{\mu,\nu}(\cdot,\cdot)$; remember that for uniformly continuous
functions $f$, the modulus of continuity satisfies $\lim_{r\too
0}\omega_f(r) = 0$. As mentioned in the introduction, we believe
that $d^R_{\mu,\nu}(\cdot,\cdot)$ is actually Lipschitz continuous,
in that case the above theorem actually implies linear convergence
rate. We leave the question of higher regularity of the cost
function $d^R_{\mu,\nu}$ to future work.

In the remaining part of this subsection we discuss some variations
and properties of the linear program formulation
eq.(\ref{e:discrete_kantorovich})-(\ref{e:discrete_kantorovich_CONSTRAINTS}).
In practice, surfaces are often only partially isometric.
Furthermore, the sampled points may also fail to have a good
one-to-one and onto correspondence (i.e. there typically are some
points in both $Z$ and $W$ that do not correspond well to any point
in the other set). In these cases it is desirable to allow the
algorithm to consider transportation plans $\pi$ with marginals
\emph{smaller or equal} to $\mu$ and $\nu$. Intuitively this means
that we allow that only some fraction of the mass is transported and
that the remainder can be ``thrown away''. This leads to the
following formulation:
\begin{equation}\label{e:discrete_PARTIAL_kantorovich}
\sum_{i,j}d_{ij}\pi_{ij} \rightarrow \min
\end{equation}
\begin{equation}\label{e:discrete_PARTIAL_kantorovich_CONSTRAINTS}
\begin{array}{l}
  \left \{ \begin{array}{c}
             \sum_i \pi_{ij} \leq \nu_j \\
             \sum_j \pi_{ij} \leq \mu_i \\
             \sum_{i,j} \pi_{ij} = \Q  \\
             \pi_{ij} \geq 0
           \end{array}
   \right .
\end{array}
\end{equation}
where $0 < \Q \leq 1$ is a parameter set by the user that indicates
how much mass \emph{must} be transported, in total.

Since these equations and constraints are all linear, we have the
following theorem:
\begin{thm}
The equations {\rm
(\ref{e:discrete_kantorovich})-(\ref{e:discrete_kantorovich_CONSTRAINTS})}
and
{\rm
(\ref{e:discrete_PARTIAL_kantorovich})-(\ref{e:discrete_PARTIAL_kantorovich_CONSTRAINTS})} admit a global
minimizer that can be computed in polynomial time, using standard
linear-programming techniques.
\end{thm}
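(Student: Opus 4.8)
The plan is to observe that both optimization problems are linear programs over a polytope, and then invoke two standard facts: (i) a feasible, bounded linear program attains its optimum at a vertex of the feasible polytope, and (ii) linear programs can be solved in polynomial time. First I would verify feasibility: for the classical formulation (\ref{e:discrete_kantorovich})-(\ref{e:discrete_kantorovich_CONSTRAINTS}), the product plan $\pi_{ij} = \mu_i \nu_j$ (recall $\sum_i \mu_i = \sum_j \nu_j = 1$ since $\M,\N$ have unit area) is feasible, so $\Pi(\mu_Z,\nu_W)$ is nonempty; for the partial formulation (\ref{e:discrete_PARTIAL_kantorovich})-(\ref{e:discrete_PARTIAL_kantorovich_CONSTRAINTS}), the scaled product plan $\pi_{ij} = \Q\,\mu_i\nu_j$ satisfies $\sum_i\pi_{ij} = \Q\,\nu_j \le \nu_j$, $\sum_j\pi_{ij}=\Q\,\mu_i\le\mu_i$, and $\sum_{ij}\pi_{ij}=\Q$, hence is feasible whenever $0<\Q\le 1$. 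Next I would note boundedness: all variables satisfy $0 \le \pi_{ij} \le 1$ and the costs $\hh{d}^R_{ij}$ (resp.\ $d_{ij}$) are finite nonnegative reals, so the objective is bounded below by $0$ on a nonempty feasible set and the infimum is attained.

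Having established that each problem is a feasible, bounded LP with rational data (the coefficients $\mu_i,\nu_j$ being finite sums/integrals that we represent to the working precision, and the costs $\hh{d}^R_{ij}$ computed by the finite procedure of Step 2), I would then simply cite that linear programming is solvable in polynomial time in the bit-size of the input --- e.g.\ via the ellipsoid method or interior-point methods --- which also returns an exact optimal vertex solution. The number of variables is $np$ and the number of constraints is $n+p$ (resp.\ $n+p+1$), both polynomial in the problem size, so the overall running time is polynomial. This gives both the existence of a global minimizer (LP optima are global, there being no spurious local minima in a linear objective over a convex polytope) and the polynomial-time computability claimed in the theorem.

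The only point requiring a little care --- and the place I would expect a referee to push back --- is the distinction between the \emph{combinatorial} polynomial-time guarantees of simplex-type pivoting (which are not known to be polynomial in the worst case) and the genuinely polynomial-time guarantees of ellipsoid/interior-point methods. I would phrase the statement and proof so as to invoke the latter: the theorem as stated says ``using standard linear-programming techniques,'' which I would interpret as including polynomial-time LP algorithms. A secondary minor issue is that the weights $\mu_i,\nu_j$ are defined via hyperbolic-area integrals and are in principle real numbers; in the algorithm they are themselves approximated (this is folded into the quadrature/discretization error already analyzed), so for the complexity statement one works with their rational approximations, and the LP is then a bona fide rational-data LP. Neither of these is a serious obstacle --- the proof is essentially a one-line appeal to LP theory once feasibility and boundedness are checked --- which is presumably why the authors state it as an immediate corollary of the linearity of the constraints.
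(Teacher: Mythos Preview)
Your proposal is correct and matches the paper's approach: the paper does not give an explicit proof of this theorem at all, merely prefacing it with ``Since these equations and constraints are all linear, we have the following theorem'' and then moving on. Your write-up is in fact more careful than the paper's, since you verify feasibility explicitly (via the product plan $\pi_{ij}=\mu_i\nu_j$ and its scaled version $\Q\,\mu_i\nu_j$), note boundedness, and distinguish between simplex-type methods and genuinely polynomial-time LP algorithms --- all of which the paper leaves implicit.
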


When correspondences between surfaces are sought, i.e. when one
surface is viewed as being transformed into the other, one is
interested in restricting $\pi$ to the class of permutation matrices
instead of allowing all bistochastic matrices. (This means that each
entry $\pi_{ij}$ is either 0 or 1.)  In this case the number of
centers $z_i$ must equal that of $w_j$, i.e. $n=N=p$, and it is best
to pick the centers so that $\mu_i=\frac{1}{N}=\nu_j$, for all $i,\
j$. It turns out that these restrictions are sufficient to {\em
guarantee} (without restricting the choice of $\pi$ in any way) that
the minimizing $\pi$ is a permutation:

\begin{thm}
If $n=N=p$ and $\mu_i=\frac{1}{N}=\nu_j$, then
\begin{enumerate}
\item
There exists a global minimizer of
{\rm(\ref{e:discrete_kantorovich})} that is a permutation matrix.
\item
If furthermore $\Q = \frac{M}{N}$, where $M< N$ is an integer, then
there exists a  global minimizer of {\rm
(\ref{e:discrete_PARTIAL_kantorovich})} $\pi$ such that $\pi_{ij}
\in \{0,1\}$ for each $i,\,j$.
\end{enumerate}
\label{t:relaxation}
\end{thm}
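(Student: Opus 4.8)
The plan is to invoke the classical theory of linear programs with totally unimodular constraint matrices, namely the Birkhoff--von Neumann circle of ideas, adapted to the partial-transport setting. First I would observe that under the hypothesis $n = N = p$ and $\mu_i = \nu_j = 1/N$, the constraint polytope of \textrm{(\ref{e:discrete_kantorovich})-(\ref{e:discrete_kantorovich_CONSTRAINTS})} is exactly $\tfrac{1}{N}$ times the Birkhoff polytope of doubly stochastic $N\times N$ matrices. The linear objective $\sum_{ij}\hh d^R_{ij}\pi_{ij}$ attains its minimum at a vertex of this polytope; by the Birkhoff--von Neumann theorem the vertices are precisely the scaled permutation matrices $\tfrac1N P$ with $P$ a permutation. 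This yields part (1): there is a global minimizer $\pi^* = \tfrac1N P^*$, and rescaling back (or rather, recognizing that the stated normalization makes $\mu_i = 1/N$, so the ``permutation matrix'' claim should be read with the appropriate $1/N$ weighting, or one multiplies through) gives a minimizer whose support is a permutation. I would phrase this via total unimodularity: the incidence matrix of the complete bipartite graph $K_{N,N}$ is totally unimodular, and the right-hand side vector $(\nu_j,\mu_i) = (1/N,\dots,1/N)$ combined with the structure forces integrality of vertices after clearing the common denominator $N$.

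For part (2) the constraint set is the transportation polytope with \emph{inequality} row- and column-sum constraints plus the single extra equality $\sum_{ij}\pi_{ij} = Q = M/N$. The key step is to show this polytope's vertices are still $\tfrac1N$-integral. I would do this by the standard reduction: introduce slack variables $s_j \geq 0$ with $\sum_i \pi_{ij} + s_j = 1/N$ and $t_i \geq 0$ with $\sum_j \pi_{ij} + t_i = 1/N$, turning all inequalities into equalities. The resulting constraint matrix is still an incidence-type matrix of a bipartite graph (now with auxiliary degree-one nodes for the slacks), hence still totally unimodular; the extra aggregate constraint $\sum \pi_{ij} = M/N$ is a single additional row which, being a $0/1$ row, preserves total unimodularity in this particular configuration — or, more carefully, one argues directly that any vertex corresponds to a spanning-forest support in the associated bipartite graph and therefore has all coordinates in $\tfrac1N\mathbb Z$. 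Since all coordinates of a feasible point lie in $[0,1/N]$, $\tfrac1N$-integrality forces each $\pi_{ij} \in \{0, 1/N\}$ (equivalently $N\pi_{ij} \in \{0,1\}$), and feasibility $\sum_{ij}\pi_{ij} = M/N$ forces exactly $M$ of the entries to equal $1/N$; this is the asserted ``$\pi_{ij}\in\{0,1\}$'' statement up to the global normalization. Finally, as in part (1), the linear objective is minimized at a vertex, so a global minimizer of this form exists.

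The main obstacle is the verification that adding the single scalar constraint $\sum_{ij}\pi_{ij} = M/N$ does not destroy integrality of the vertices — total unimodularity is not automatically preserved under appending an arbitrary $0/1$ row. I expect the cleanest route is not to argue via total unimodularity of the augmented matrix at all, but rather combinatorially: take any vertex $\pi$ of the partial polytope, consider the bipartite multigraph $G$ on $Z \sqcup W$ (plus slack vertices) whose edges are the pairs with $0 < \pi_{ij} < 1/N$, and show that if $G$ contained a cycle one could perturb $\pi$ along that cycle in both directions while preserving \emph{all} constraints including $\sum \pi_{ij} = M/N$ (the perturbation adds $+\epsilon, -\epsilon$ alternately around the cycle, so it has zero net effect on the aggregate sum), contradicting vertexhood. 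Hence $G$ is a forest, every fractional edge is forced, and one propagates integrality from the leaves inward. I would also need to double-check the degenerate cases where some $t_i$ or $s_j$ vanishes, but these are handled by the same forest argument. The remaining pieces — that an LP attains its optimum at a vertex, and that all this is polynomial-time (interior-point or simplex with the standard bounds) — are standard and I would simply cite them, as the preceding theorem in the excerpt already does.
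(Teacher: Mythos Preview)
Your proposal is correct. For part~(1) it is identical to the paper's argument: rescale by $N$, invoke Birkhoff--von Neumann, and use that a linear functional on a polytope is minimized at a vertex. For part~(2) you and the paper share the same target---showing the vertices of the rescaled polytope are $0/1$---but diverge in execution. The paper does \emph{not} introduce slacks; it proves directly, in an appendix, that the $(2N{+}1)\times N^2$ matrix obtained by appending an all-ones row to the incidence matrix of $K_{N,N}$ is totally unimodular (via an induction on submatrix size), and then invokes Hoffman--Kruskal. Your slack-variable route is a genuine alternative and can in fact be made cleaner than you suggest: once $t_i,s_j$ are introduced, the aggregate constraint $\sum\pi_{ij}=M$ becomes $\sum_i t_i=N-M$, which is nothing but the degree constraint at a sink vertex in the extended bipartite graph on $\{1,\dots,N,\text{sink-row}\}\times\{1,\dots,N,\text{sink-col}\}$. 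The feasible set is then the face $\{x_{\text{sink},\text{sink}}=0\}$ of an ordinary $(N{+}1)\times(N{+}1)$ transportation polytope with integer supplies and demands $(1,\dots,1,N-M)$; vertices of a face are vertices of the ambient polytope, and those are classically integral. So the obstacle you flagged---that appending a $0/1$ row need not preserve TU---evaporates after the embedding, and your forest/cycle-cancelling sketch is just the standard proof of integrality for transportation polytopes. The paper's route buys self-containment at the cost of a somewhat delicate TU induction; yours, once the embedding is made explicit, reduces everything to a textbook fact.
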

\begin{rem}
In the second case, where $\pi_{ij} \in \{0,1\}$ for each $i,\,j$,
and $\sum_{i,j=1}^N \pi_{ij}=M$, $\pi$ can still be viewed as a
permutation of $M$ objects, ``filled up with zeros''.  That is, if
the zero rows and columns of $\pi$ (which must exist, by the pigeon
hole principle) are removed, then the remaining $M \times M$ matrix
is a permutation.
\end{rem}
\begin{proof}
We first note that in both cases, we can simply renormalize each
$\mu_i$ and $\nu_j$ by $N$, leading to the rescaled systems
\begin{equation}
  \left \{ \begin{array}{c}
             \sum_i \pi_{ij} = 1 \\
             \sum_j \pi_{ij} = 1 \\
             \pi_{ij} \geq 0
           \end{array}
   \right . \mbox{\hspace{1 in}}
  \left \{\begin{array}{c}
         \sum_i \pi_{ij} \leq 1 \\
             \sum_j \pi_{ij} \leq 1 \\
             \sum_{i,j} \pi_{ij} = M  \\
             \pi_{ij} \geq 0
\end{array}
   \right .
\label{e:discrete_kantorovich_CONSTRAINTS_disk}
\end{equation}
To prove the first part, we note that the left system in
(\ref{e:discrete_kantorovich_CONSTRAINTS_disk}) defines a convex
polytope in the vector space of matrices that is exactly the
Birkhoff polytope of bistochastic matrices. By the Birkhoff-Von
Neumann Theorem \cite{Lovasz86} every  bistochastic matrix is a
convex combination of the permutation matrices, i.e. each $\pi$
satisfying the left system in
(\ref{e:discrete_kantorovich_CONSTRAINTS_disk}) must be of the form
$\sum_k c_k\tau^k$, where the $\tau^k$ are the $N!$ permutation
matrices for $N$ objects, and $\sum_k c_k = 1$, with  $c_k \geq 0$.
The minimizing $\pi$ in this polytope for the linear functional
(\ref{e:discrete_kantorovich}) must thus be of this form as well. It
follows that at least one $\tau^k$ must also minimize
(\ref{e:discrete_kantorovich}), since otherwise we would obtain the
contradiction
\begin{equation}\label{e:linear_extrermas_at_vertices}
    \sum_{ij}d_{ij}\pi_{ij} = \sum_k c_k \Big (\sum_{ij}d_{ij} \tau^k_{ij} \Big ) \geq \min_k \Big \{ \sum_{ij}d_{ij} \tau^k_{ij} \Big \} > \sum_{i,j}\,d_{ij}\,\pi_{ij} ~.
\end{equation}

The second part can be proved along similar steps: the right system
in (\ref{e:discrete_kantorovich_CONSTRAINTS_disk}) defines a convex
polytope in the vector space of matrices; it follows that every
matrix that satisfies the system of constraints is a convex
combination of the extremal points of this polytope. It suffices to
prove that these extreme points are exactly those matrices that
satisfy the constraints and have entries that are either 0 or 1
(this is the analog of the Birkhoff-von Neumann theorem for this
case; we prove this generalization in a lemma in Appendix C); the
same argument as above then shows that there must be at least one
extremal point where the linear functional
(\ref{e:discrete_kantorovich}) attains its minimum.
\end{proof}

When we seek correspondences between two surfaces, there is thus no
need to {\em impose} the (very nonlinear) constraint on $\pi$ that
it be a permutation matrix; one can simply use a (standard) linear
program and Theorem \ref{t:relaxation} then guarantees that the
minimizer for the ``relaxed'' problem {\rm
(\ref{e:discrete_kantorovich})-(\ref{e:discrete_kantorovich_CONSTRAINTS})}
or {\rm (\ref{e:discrete_PARTIAL_kantorovich})-
(\ref{e:discrete_PARTIAL_kantorovich_CONSTRAINTS})} is of the
desired type if $n=N=p$ and $\mu_i=\frac{1}{N}=\nu_j$.

\subsection{Consistency}
In our schemes to compute the surface transportation distance, for
example by solving (\ref{e:discrete_PARTIAL_kantorovich}), we have
so far not included any constraints on the regularity of the
resulting optimal transportation plan $\pi^*$. When computing the
distance between a surface and a reasonable deformation of the same
surface, one does indeed find, in practice, that the minimizing
$\pi^*$ is fairly smooth, because neighboring points have similar
neighborhoods. There is no guarantee, however, that this has to
happen. Moreover, we will be interested in comparing surfaces that
are far from (almost) isometric, given by noisy datasets. Under such
circumstances, the minimizing $\pi^*$ may well ``jump around''. In
this subsection we propose a regularization procedure to avoid such
behavior.

Computing how two surfaces best correspond makes use of the values
of the ``distances in similarity'' $d^R_{\mu,\nu}(z_i,w_j)$ between
pairs of points that ``start'' on one surface and ``end'' on the
other; computing these values relies on finding a minimizing \Mbs
transformation for the functional (\ref{e:d_mu,nu(z,w)_def}). We can
keep track of these minimizing \Mbs transformations $m_{ij}$ for the
pairs of points $(z_i,w_j)$ proposed for optimal correspondence by
the optimal transport algorithm described above. Correspondence
pairs $(i,j)$ that truly participate in some close-to-isometry map
will typically have \Mbs transformations $m_{ij}$ that are very
similar. This suggests a method of filtering out possibly mismatched
pairs, by retaining only the set of correspondences $(i,j)$ that
cluster together within the M\"{o}bius group.

There exist many ways to find clusters. In our applications, we
gauge how far each \Mbs transformation $m_{ij}$ is from the others
by computing a type of $\ell_1$ variance:
\begin{equation}\label{e:variance_function}
    E_V(i,j) = \sum_{(k,\ell)}\norm{m_{ij} - m_{k\ell}},
\end{equation}
where the norm is the Frobenius norm (also called the
Hilbert-Schmidt norm) of the $2\times 2$ complex matrices
representing the M\"{o}bius transformations, after normalizing them
to have determinant one. We then use $E_V(i,j)$ as a consistency
measure of the corresponding pair $(i,j)$.

\section{Generalization to sphere-type surfaces}
\label{s:generalization_to_sphere_type}
So far we have restricted ourselves to disk-type surfaces, which is
somewhat limiting in practice. It is fairly straightforward to
generalize the ideas presented in
\cite{Lipman_Daubechies:2010:polytimesurfcomp} to other types of
surfaces; in this part of the paper we show how this can be done. We
choose to concentrate on the common case of sphere-type surfaces,
that is, genus zero surfaces. We will start by making the necessary
theoretical changes, and then we will present the numerical
algorithm; an example will be given in Section \ref{s:examples},
alongside examples for disk-type surfaces.

\subsection{Generalization of the distance function.}
The Uniformization theory for sphere-type surfaces ensures a
conformal one-to-one and onto mapping of the surface to the 2-sphere
or equivalently to the extended complex plane $\eC=\C \cup
\set{\infty}$, the Stone-\v{C}eck compactification of $\C$.
\nomenclature{$\eC$}{The extended complex plane, $\eC = \C \cup
\set{\infty}$.} The group of automorphisms of the extended plane are
the M\"{o}bius transformations $m:\eC \too \eC$, given by:
\begin{equation}\label{e:general_mobius_trans}
    m(z)=\frac{az+b}{cz+d},
\end{equation}
where $a,b,c,d \in \C$ and $ad-bc \ne 0$. In other words, any
bijective conformal mapping taking the extended plane to itself is a
M\"{o}bius transformation, and vice-versa any M\"{o}bius
transformation is a bijective conformal map of the extended plane.

The key to successful generalization to this case is choosing the
neighborhoods $\Omega_{z_0,R}$ for a point $z_0$ in a
M\"{o}bius-invariant way. In contrast to the situation on the disk,
where the hyperbolic distance is invariant under the group $\Md$,
the extended complex plane $\eC$ does not posses a distance
invariant under the full M\"{o}bius group $\Mec$. This can be
understood by noting that there is no non-constant continuous
two-argument function $f(z,w)$ such that $f(m(z),m(w))=f(z,w)$ for
all M\"{o}bius transformations $m\in \Mec$ and all $z,w\in \eC$.
Therefore, the neighborhoods must be constricted in a different way.

We tackle this problem by starting with the most basic invariant of
M\"{o}bius geometry, namely (generalized) circles. The neighborhood
of a point $z_0$ will then be defined as the interior of a
particular circle in the extended plane.

Let us first define the collection of \emph{circles} in $\eC$ plane
with prescribed \emph{orientation} by $\CC$. The role of the
orientation attached to circles will become clear momentarily.

\begin{defn}\label{def:mobius_circle}
\textbf{ }
\begin{enumerate}
\item
A \emph{circle} $c \in \CC$ is defined as the set of complex numbers
satisfying an equation of the type
$$\mathfrak{A}z\bar{z} + \mathfrak{B}z + \bbar{\mathfrak{B}z}+\mathfrak{D}=0,$$
where $\mathfrak{A},\mathfrak{D} \in \R$ and $\mathfrak{B}\in \C$.
Note that we define $\infty \in c$ if $\infty$ is an accumulation
point of $c$ (in the extended complex plane topology).
\item
The \emph{orientation} of a circle $c\in \CC$ is defined by labeling
the ``inside'' and ``outside'' connected parts of $\eC \setminus c$.
\end{enumerate}
\end{defn}
\nomenclature{$\CC$}{The collection of circles endowed with
orientation in $\eC$.}

Note that this definition of a ``circle'' includes straight lines
that can be thought of as circles through infinity (which is a
legitimate point in the extended complex plane), and also ``empty
circles'' that are empty sets (e.g. if
$\mathfrak{A}=\mathfrak{D}=1$, $\mathfrak{B}=1$).

The candidate neighborhoods, needed to generalize the definition of
$d^R_{\mu,\nu}(\cdot,\cdot)$, will be defined by selecting
particular oriented circles in $\CC$; since M\"{o}bius
transformations already take circles to circles, we need to ensure
only that our selection criterium is invariant under M\"{o}bius
transformations as well. We shall of course pick the neighborhood of
$z_0 \in \eC$ from the collection of oriented circles $\CC$ that
contain $z_0$. In addition, the choice should be: 1)
isometry-invariant (i.e., if $\M,\N$ are isometric sphere-type
surfaces, and $z_0, w_0 \in \D$ are corresponding points under the
isometry on their uniformizations on $\eC$, then the M\"{o}bius
transformation $m$ that corresponds to that isometry should map the
neighborhood $\Omega_{z_0,A}$ around $z_0$ to the neighborhood
$\Omega_{w_0,A}$ around $w_0$), 2) robust to noise, and 3)
characterized by a ``size'' parameter similar to $R$ in the
disk-type surfaces.

The key idea is to single out from the collection of circles $\CC$ a
single or discrete number of circles using the surface's metric
tensor. We will outline one possible construction as an example but
other construction are certainly possible.

To characterize ``size'' we shall use area: we consider circles such
that their area (w.r.t the surface's metric) is of prescribed
magnitude $A$, that is  $\set{c\ \mid \ \Vol_\M(\bbar{c})=A}$, where
$\bbar{c}$ denotes the union of the interior of the circle $c$
(defined by its orientation) and the set $c$. Since we assume our
surfaces have unit area, $A\in (0,1]$.\nomenclature{$\bbar{c}$}{The
union of the interior of the set $c$ and the set $c$ (used for
circles).}

The neighborhood $\Omega_{z_0,A}$ is then defined by
\begin{align}  \nonumber
 \Omega_{z_0,A}  =  & \bbar{\mathop{\mathrm{argmin}}_{c\in\CC_{z_0,A}}
 \mathrm{length}_\M(c)},\\ \label{e:def_nigh_in_sphere}
 & \mathrm{where}\\ \nonumber
 & C_{z_0,A} = \set{c\in \CC \ \mid \ \Vol_\M(\bbar{c}) = A \ \mathrm{and} \
 z_0 \in \bbar{c}},\ \mathrm{and} \nonumber
\end{align}
where $\mathrm{length}_{\M}(c)$ denotes the length of the curve $c$
based on the metric of surface $\M$.
\nomenclature{$\mathrm{length}_\M (c)$}{The length of a curve $c$
w.r.t the metric of surface $\M$.}

A few remarks are in order. Let us consider the collection of
circles $c\in\CC$ such that $\Vol_\M(\bbar{c})=A$. We can use the
Riemann sphere model which can be thought of as the standard $S^2
\subset \R^3$. Each circle $c\in\CC$ is either s standard circle on
$S^2$ or a point or an empty set. For every point $p \in S^2$ there
is a one dimensional family of circles $c_{p,t}$ defined by
$c_{p,t}=\set{q \in S^2 \ \mid \ \ip{q,p}=1-t}$, where $t\in(0,2)$:
the interior is taken to be the part that contains the point $p$, so
that $\bbar{c}_{p,t} = \set{q \in S^2 \ \mid \ \ip{q,p} \geq 1-t}$.
Obviously, $v:t\mapsto \Vol_\M(c_{p,t})$ is a monotone function, and
$\mathop{\lim}_{t\too 0}v(t) = 0$, $\mathop{\lim}_{t\too 1}v(t) =
1$. Lastly, $v(t)$ is a continuous function and therefore there
exists a unique value $t_A$ (depending on $p$) such that $v(t_A)=A$.
This means that for every point $p\in S^2$ we can find a unique
circle from the concentric family $\set{c_{p,t}}_{p\in S^2,
t\in(0,2)}$ that has area $A$. Since every non-empty and non-point
circle in $\CC$ can be identified as $c_{p,t}$ for some point $p\in
S^2$ and some $t\in(0,2)$, the collection $c_{p,t_A(p)}$, $p\in S^2$
is a parametrization of the collection $\set{c\in\CC \ \mid \
\Vol_\M(\bbar{c})=A}$. Now, the extra restriction $z_0\in \bbar{c}$
defines a subset of $S^2$ in the sense that we consider only $p\in
S^2$ such that $z_0\in \bbar{c}_{p,t_A}$. Since $t_A$ is continuous
as a function of $p$ this restriction defines a compact subset of
$S^2$, which in turn implies that the minimum in
eq.(\ref{e:def_nigh_in_sphere}) is achieved. Lastly, on this two
dimensional manifold we consider the function $c\mapsto
\mathrm{length}_\M(c)$ which is a smooth function and in the generic
case has a unique global minimum. We will henceforth assume that
eq.(\ref{e:def_nigh_in_sphere}) has a unique minimizer.

Since the neighborhoods $\Omega_{z_0,A}$ are chosen from the circle
collection, using only intrinsic properties they are invariant to
M\"{o}bius changes of coordinates of the metric; in other words if
two isometric surfaces $\M,\N$ are compared at a pair of isometric
points $z_0,w_0$ ($w_0$ is the image of $z_0$ under the isometry)
then the isometry mapping $\M$ to $\N$ is a M\"{o}bius
transformation taking not only $z_0$ to $w_0$ but also
$\Omega_{z_0,A}$ to $\Omega_{w_0,A}$.

Once the neighborhoods are set, the definition of
$d^A_{\mu,\nu}(z_0,w_0)$ is straightforward: denote by
$M^A_{z_0,w_0}$ the collection of M\"{o}bius transformations $m$
that take the interior of the circle $\Omega_{z_0,A}$ to the
interior of $\Omega_{w_0,A}$, and for which $m(z_0)=w_0$. This is
again a one parameter subgroup parameterized over the unit circle
(angle) $\theta \in [0,2\pi)$, as in the disk-type surface case we
then define

\begin{equation}\label{e:d_mu,nu(z,w)_def_sphere_type}
    d^A_{\mu,\nu}(z_0,w_0) :=
\mathop{\inf}_{m \in M^A_{z_0,w_0}}\int_{\Omega_{z_0,A}} \,\Big |\,1
- \frac{ \nu((m(z)) \abs{m'(z)}^2}{\mu(z)}\,\Big|\, d\vol_M(z).
\end{equation}
This is indeed the analog to eq.~(\ref{e:d_mu,nu(z,w)_def}) for
disk-type surfaces since both integrals can be written in the
invariant form
$$\int_{\Omega_{z_0,A}} \norm{\wt{g}(z) -
\parr{m^*\wt{h}}(z)}_{\wt{g}} d\vol _\M (z),$$ where $\wt{g}$ (resp. $\wt{h}$) is the
push-forward metric of $\M$ (resp. $\N$) on $\eC$, and the norm
$\norm{\cdot}_{\wt{g}}$ is the standard one, induced by $\wt{g}$:
for a tensor $t_{ij}dx^i\otimes dx^j$, we denote $\wt{g}=\wt{g}_{ij}
dx^i \otimes dx^j$, then $\norm{t}^2_{\wt{g}} =
t_{ij}t_{k\ell}\wt{g}^{ik}\wt{g}^{j\ell}$.

This area-based definition of the neighborhoods could also be used
in the disk-type case; this would yield a unified definition for
both cases. The difference between the new definition (above) and
the old definition (hyperbolic geodesic disk) in the case of
disk-type surfaces is that the old definition provides smaller
neighborhoods near the boundary for disk type surfaces, while the
new definition will maintain constant area neighborhoods even
arbitrarily close to the surface's boundary; depending on the
application and data properties, one or the other selection may be
preferable.

\subsection{Numerical details.}

The algorithm for the sphere-type case is basically the same as for
the disk-type; That is, we first sample $N=n=p$ equally distributed
points (as described in Section
\ref{s:the_discrete_case_implementation}) $Z$,$W$ on the surfaces
$\M$,$\N$ (respectively). Second, the cost function
$d^A_{\mu,\nu}(z_i,w_j)$ is computed between every pair of sample
points $(z_i,w_j) \in Z\times W$, and finally, a discrete
mass-transportation problem is solved between the discrete measures
$\mu_Z$ and $\nu_W$ to output the distance and the correspondences.
A few adjustments need to be made to this algorithm for the
sphere-type case: 1) precomputing (approximating) the neighborhoods
$\Omega_{z_i,A}$, $z_i\in Z$, and $\Omega_{w_j,A}$, $w_j\in W$,
which involves more computation than for the disk-type surfaces
case, 2) representing the conformal density on the extended complex
plane rather than the unit disk, and approximating the local
distance $d^R_{\mu,\nu}(z_i,w_j)$, and 3) calculating the optimal
transport between the discrete densities. Next we describe these
adjustments in more detail.

\par\textbf{Computing the neighborhoods $\Omega_{z_i,A}$.}
We describe the construction of neighborhoods $\Omega_{z_i,A}$ for
every sample point $z_i\in Z$ in $\M$. The construction in $\N$ is
identical.  We want to find the neighborhood $\Omega_{z_i,A}$
(assumed unique) based on the definition
eq.(\ref{e:def_nigh_in_sphere}). That is, $\Omega_{z_i,A}$ is the
interior of a conformal circle, has surface area $A$, and has
minimal circumference compared to all other such circles. A circle
(on $S^2$ or equivalently on $\eC$) is defined by a triplet of
points $z_j,z_k,z_\ell$ in the usual way. Adding the orientation,
each triplet provides us with two choices of conformal circle
neighborhoods. In our implementation we considered all $2{N \choose
3}$ circles generated by the sample set $Z$. For each such circle
(endowed with orientation) we estimate the surface area inscribed in
it. If it is $\epsilon-$close to the prescribed amount $A$ we
estimate its circumference on the surface. We define
$\Omega_{z_i,A}$ the one with smallest circumference that contains
$z_i$.

\par\textbf{Approximating $d^R_{\mu,\nu}(z_i,w_j)$.}
The second issue that arises when generalizing to sphere-type
surfaces is the representation of the conformal density
$\mu(z),\nu(w)$. One option is to use a spherical interpolation
scheme and repeat the steps as described in Appendix~\ref{a:appendix
B}. However, one can also pick a different path that is very simple
and offers an alternative to the smooth TPS approximation described
in Appendix~\ref{a:appendix B}. The idea is to represent the
conformal density by keeping track of a set of equally spread points
$\wt{Q}=\set{\wt{q}_\ell}_{\ell=1}^\L \subset \M$ on the surface
(similarly to $Z$), where each point represent a surface patch
(Voronoi cell) of size $\frac{1}{\L}$. In our implementation for
sphere-type surfaces, we use this latter choice. We usually take a
set of size $\L\approx 1000$. The discrete density is then
represented on the extended plane as $Q=\set{q_\ell}_{\ell=1}^\L =
\Phi(\wt{Q}) \subset \eC$. It can be shown that given a domain
$\Omega \subset \eC$ we have the approximation:
\begin{equation}\label{e:points_density_tto_approximate_volume}
    \abs{\frac{1}{\L} \sum_{q_i\in \Omega}1 - \int_\Omega d\vol_\M } \leq C \varphi
    (\set{\wt{q}_\ell}).
\end{equation}
To justify this approximation result let us denote by $V_i$ the
Voronoi cells of the set $\wt{Q}$ based on the metric of surface
$\M$. Lemma \ref{lem:voronoi_in_balls} (proved in Appendix
\ref{a:approximating_optimal_transport}) then implies that
$\frac{1}{\L}\sum_{q_i\in \Omega}1 = \Vol_\M(\cup V_i) \leq
\Vol_\M\parr{\mathop{\cup}_{p\in\M}
B_g(p,2\varphi_g(\wt{Q})+\epsilon)}$, for arbitrary $\epsilon>0$,
and it is not hard to see that for domains $\Omega$ with regular
boundary curves $\abs{\Vol_\M\parr{\mathop{\cup}_{p\in\M}
B_g(p,2\varphi_g(\wt{Q})+\epsilon)} - \Vol_\M(\Omega)} \leq C
\varphi(\set{\wt{q}_\ell})$.

Note that the above arguments do not use the uniform property of
$\wt{Q}$, and will work also when the sampling is not uniform, as
long as the fill distance $\varphi_g(\wt{Q})\too 0$.

Figure \ref{f:sphere_density_and_neighborhood} shows the density
points spread on a cat model and on its uniformization sphere, as
well as the neighborhood (in red) of a single point on the cat's
front leg. Denote by $P=\set{p_\ell}_{\ell=1}^\L \subset \eC$ the
density points for surface $N$. To approximate
$d_{\mu,\nu}^A(z_0,w_0)$ as defined in
eq.(\ref{e:d_mu,nu(z,w)_def_sphere_type}) we follow the following
steps: first, we map $Q \cap \Omega_{z_0,R}$ to the unit disk $\D$
via a M\"{o}bius transformation that is defined by taking $z_0$ to
the origin and $\Omega_{z_0,A}$ to the unit disk $\D$. Denote the
resulting unit disk points by $Q_{z_0}$. Similarly we map $P\cap
\Omega_{w_0,A}$ to the unit disk (taking $w_0$ to the origin). We
denote the resulting set by $P_{w_0}$. Second, for each $\theta \in
\set{0,\frac{2\pi}{L},2\frac{2\pi}{L},...,(L-1)\frac{2\pi}{L}}$ we
rotate $Q_{z_0}$ by $\theta$ around the origin, $e^{\bfi \theta}
Q_{z_0}$, and compare to the second density $P_{w_0}$. The way we
compare the two densities is justified by
eq.~(\ref{e:points_density_tto_approximate_volume}), that is we
subdivide the unit disk (actually the entire square $[-1,1]^2$) into
bins and count, for each of the two densities $e^{\bfi \theta}
Q_{z_0}, P_{w_0}$ the number of points in each bin. Then, we can sum
the absolute value of the difference to achieve our approximation to
$d_{\mu,\nu}^A(z_0,w_0)$. To smoothen the approximation, it is
useful to convolve the bins' structure with some kernel (this is
analog to the smoothing splines used in the disk case). In our
experiments (presented in Section \ref{s:examples}) we used a
$30\times30$ bin structure and convolved with the kernel
$\frac{1}{9}(1,1,1)\otimes(1,1,1)$.

\par\textbf{Solving the linear programming for spheres.}
Once we have defined $Z=\set{z_i}_{i=1}^n, W=\set{w_j}_{j=1}^p$,
$\mu_Z,\nu_W$ and $\hh{d}^A_{\mu,\nu}(z_i,w_j)$ we can go ahead and
calculate $T_{\hh{d}}(\mu_Z,\nu_W)$ as explained in Section
\ref{ss:step_3_linear_programming}. Since our analysis in Appendix
\ref{a:approximating_optimal_transport} is for general compact
separable and complete metric spaces it will hold also for the
sphere case. Hence, once the approximation error of
$\hh{d}^A(z_i,w_j) \approx d^A_{\mu,\nu}(z_i,w_j)$ is set (as
outlined above), Theorem
\ref{thm:final_approx_discrete_transportation} can be applied to
yield the convergence result.

\begin{figure}[t]
\centering \setlength{\tabcolsep}{0.4cm}
\includegraphics[width=1\columnwidth]{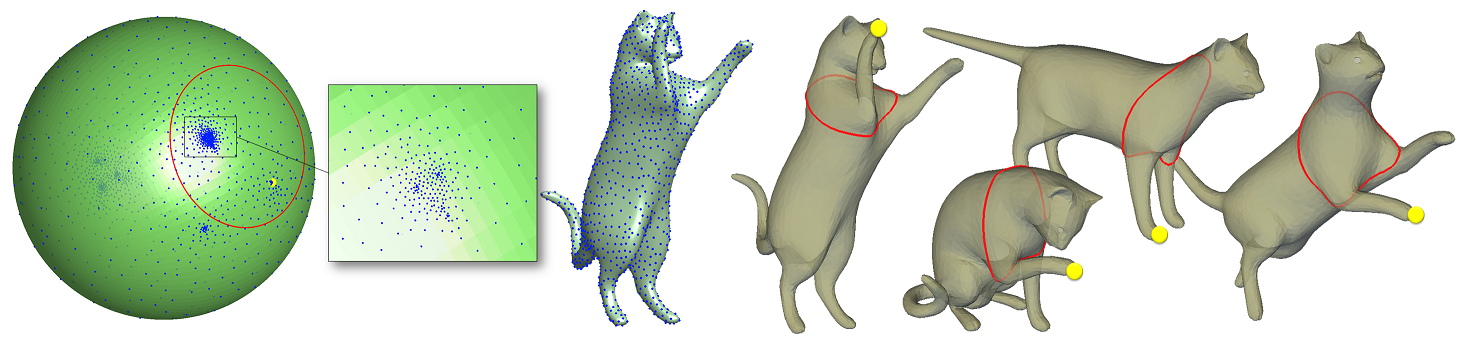}
\caption{Sampling of the sphere-type cat surface (third from the
left), and the sampling shown on the uniformization sphere (left).
Note the zoom-in inset of the cat's head. We also show the
neighborhood (marked with red on the sphere) for $A=0.3$ of a point
(marked with yellow ball on the sphere) on the cat's front leg (the
model is taken from the Non-rigid World data-set
\protect{\cite{BBK06b}}). On the right we show the neighborhood
corresponding to the yellow point on the original cat surface model
(fourth from the right) and on few other surfaces from the same
class (the neighborhoods were computed independently in each
surface). Note the invariance of this neighborhood under
nearly-isometric deformations.}
\label{f:sphere_density_and_neighborhood}

\end{figure}

\section{Stability}
\label{s:stability}

In this section we prove Theorem \ref{thm:stability_4.1}; this is a
first result connecting the new distance with local geodesic
distortion. We will prove it for the disk case, but similar
arguments can be used for the sphere case. 

\begin{proof}
(of Theorem \ref{thm:stability_4.1}) We will use three different
metrics on $\D$: the metrics of $\M,\N$ represented by the tensors
$\wt{g},\wt{h}$ (resp.), and the hyperbolic metric
$d_H(\cdot,\cdot)$.

The main idea of the proof is to use that a small value for $\d^R(\M,\N)$
means that there exists a $\pi \in \Pi(\mu,\nu)$ with respect to which
the integral  (\ref{e:generalized_Kantorovich_transportation}) of the cost
function $d^R_{\mu,\nu}(z,w)$ is small as well; by the
definition (\ref{e:d_mu,nu(z,w)_def}) of this cost function there must therefore
be many corresponding neighborhoods $\Omega_{z_0,R}$ and $\Omega_{w_0,R}$ in
$\M$ and $\N$ respectively that are very similar; we shall use these similarities
to build local isometries.

Denote $\d^R(\M,\N)=\eps$. Fix $K\subset \D$ to be a hyperbolic disk
centered at the origin with an arbitrarily large (but finite)
radius, $K=\Omega_{0,L}$. (Note: we could equally well have picked
$K$ to be an arbitrary set that is compact in the hyperbolic metric;
this particular choice alleviates notations.) Because
$\Omega_{0,L+R}$ is a compact subset of $\D$, there exists a
constant $C=C(L)< \infty$ such that $d_H(z,z') \leq C d_g(z,z')$ for
all $z,\,z'\in \Omega_{0,L+R}$. Similarly, there exist positive
constants $C'=C'(L),\,C_1=C_1(L)<\infty$ such that $\mu(z)\geq
\frac{1}{C'}$ for all $z\in \Omega_{0,L+R}$ and $\Vol_\M(\Omega_{z,R/2})
\geq \frac{2}{ C_1}$ for all $z \in \Omega_{0,L} = K$
.

Now set $r < R/2C$. We will prove the desired
bounds for arbitrary points $z_1,z_2\in K$ such that $d_g(z_1,z_2)<r$. Let us pick such an
arbitrary pair, which we shall keep fixed for the moment. We immediately note that
$d_H(z_1,z_2)< R/2$.

Let now $\gamma_{z_1,z_2}$ be the minimal-length-geodesic curve
connecting $z_1$ and $z_2$ (in terms of the metric corresponding to
surface $\M$); by taking $r>0$ sufficiently small we can ensure that
this geodesic is unique. Since $d_g(z_1,\xi)<r$ and thus
$d_H(z_1,\xi)<R/2$ for all $\xi \in \gamma_{z_1,z_2}$, it follows
that $\gamma_{z_1,z_2} \subset \Omega_{z_1,R/2}$. Morever, by a
simple application of the triangle inequality, we have
$\gamma_{z_1,z_2} \subset \Omega_{z_0,R}$ for all $z_0$ such that
$d_H(z_1,z_0)<R/2)$, i.e. for all $z_0 \in \Omega_{z_1,R/2}$. It
follows that
$$
\Omega_{z_1,R/2}\subset B := \set{z_0 \ \mid  \ \gamma_{z_1,z_2} \subset
\Omega_{z_0,R} }\,.
$$
On the other hand,
$\Vol_\M(\Omega_{z_1,R/2})\geq$ $\inf_{z\in K}\Vol_\M(\Omega_{z,R/2})$
$= \min_{z\in K}\Vol_\M(\Omega_{z,R/2})>\frac{2}{ C_1}$. This
implies that the volume of $B$ on the
surface $\M$ can be bounded from below by
$$
\Vol_\M(B)\geq
\Vol_\M(\Omega_{z_1,R/2}) \geq \frac{2}{ C_1}\,.
$$

We can use this lower bound to show that there must be points $z_0$ in $B$,
and corresponding points $w_0$ in $\D$,
for which $d^R_{\mu,\nu}(z_0,w_0)$ is small. Indeed, let
be $\pi^*$ an optimal transportation plan realizing the
minimal transportation cost, then:
\begin{align*}
\eps & = \int_{\D\times \D}d^R_{\mu,\nu}(z,w)d\pi^*(z,w)
 \geq \int_{B\times \D}d^R_{\mu,\nu}(z,w) d\pi^*(z,w) \\
& \geq \inf_{(z,w)\in B\times \D}\brac{d^R_{\mu,\nu}(z,w)}
\int_{B\times \D}d\pi^*(z,w) =\inf_{(z,w)\in B\times
\D}\brac{d^R_{\mu,\nu}(z,w)} \vol_\M(B).
\end{align*}

There thus exists some point $(z_0,w_0)\in B\times \D$ such that
$$d^R_{\mu,\nu}(z_0,w_0) \leq \frac{2\,\eps}{\vol_\M(B)} \leq C_1 \,\eps.$$

Next, we note that $d^R_{\mu,\nu}$ can be written as (see
\cite{Lipman_Daubechies:2010:polytimesurfcomp}, and
eq.(\ref{e:d_mu,nu(z,w)_def}))
$$
d^R_{\mu,\nu}(z_0,w_0) = \inf_{m\in \Md, m(z_0)=w_0}
\int_{\Omega_{z_0,R}}\norm{\wt{g}-m^*\wt{h}}_{\wt{g}(z)}d\vol_\M(z)
= \int_{\Omega_{z_0,R}} \abs{\mu(z)-\nu(m(z))} \, d\vol_H(z),
$$
where $\|\cdot\|_{\wt{g}(z)}$ is the norm in the relevant tensor space
as defined in Section \ref{s:generalization_to_sphere_type} (and in
\cite{Lipman_Daubechies:2010:polytimesurfcomp}). The $m\in \Md$ that
satisfy $m(z_0)=w_0$ constitute a one-parameter compact family, so
that the infimum is achieved; let us call $f^{z_0}$ this minimizing
M\"{o}bius transformation from $\Omega_{z_0,R}$ to $\Omega_{w_0,R}$.
We have thus
$$
\int_{\Omega_{z_0,R}}\norm{\wt{g}-\left(f^{z_0}\right)^*\wt{h}}_{\wt{g}(z)}d\vol_\M(z)
= \int_{\Omega_{z_0,R}} \abs{\mu(z)-\nu(f^{z_0}(z))} \, d\vol_H(z)
\leq C_1 \,\epsilon\,.
$$
Now
$$
s(z) = \abs{\mu(z)- \nu(f^{z_0}(z))}
$$
is Lipschitz as a function of the argument $z$, since $\mu,\nu$ are
Lipschitz on $\Omega_{0,L+2R}\subset \D$, 
and $f^{z_0}$ is analytic; moreover, by observing
that $\Omega_{z_0,R}\subset \Omega_{0,L+2R}$, we can
bound the Lipschitz constant for $s$ independently of the particular
choices made so far, i.e., for all $z,\,z' \in \Omega_{z_0,R}$,
$$
|s(z)-s(z')|\le \kappa \, |z-z'|\,.
$$
Take now any $u$ in $\Omega_{z_0,R}$, and set $S=s(u)$. Then
$$
C_1 \, \epsilon \ge \int_{\Omega_{z_0,R}} \max(0, S- \kappa
|u-z|)\,d\vol_H(z) \geq C'' \, S^3\,,
$$
where $C''>0$ can be picked independently of the location of $u$ within 
$\Omega_{z_0,R}$, and uniformly for $z_0 \in \Omega_{0,L+R}$. 
It follows that $S \le C_2\, \epsilon^{1/3}$. This shows that
\begin{equation}\label{e:first_bound_on_norm_mu-f*nu}
    \max_{z\in \Omega_{z_0,R}}\norm{\wt{g}-m^*\wt{h}}_{\wt{g}(z)} \leq  C_2
    \,\eps^{1/3}\,,
\end{equation}
for some constant $C_2$ that doesn't depend on $\epsilon$ or $z_1,\,z_2$.

Given arbitrary $z_1$, $z_2$ in $K$, we have thus found $\Omega_{z_0,R}$
that contains the full geodesic $\gamma_{z_1,z_2}$ and a M\"{o}bius map
$f^{z_0}$ from $\Omega_{z_0,R}$ to a corresponding $\Omega_{w_0,R}$ that,
within to a small error controlled by the small quantity $\d^R(\M,\N)$,
maps the local geometry in $\M$ to that in $\N$.

To alleviate notations in what follows, we drop the superscript
$z_0$ on $f^{z_0}$. Our plan is to use the minimizing geodesic path
$\gamma_{z_1,z_2}(t):[0,1]\too \M$ between the points
$\gamma_{z_1,z_2}(0)=z_1$, and $\gamma_{z_1,z_2}(1)=z_2$ to compute
bounds on $d_h(f(z_1),f(z_2))$. Using the differential $[Df]$ of the
map $f$, which is a linear map between the tangent spaces $T_{z}\M$
(with the metric $\wt{g}$) and $T_w\N$ (with the metric $\wt{h}$),
we have 
\begin{align}\label{e:prf_lower_bnd} \nonumber
d_h(f(z_1),f(z_2))  & \leq \int_0^1 \norm{\frac{d}{dt}
f(\gamma_{z_1,z_2}(t))}_h dt \\ \nonumber
& \leq \int_0^1 \norm{[Df]} \norm{\dot{\gamma}_{z_1,z_2} (t)}_g dt \\
& \leq \max_{z\in\gamma_{z_1,z_2}} \norm{[Df(z)]} d_g(z_1,z_2) \,,
\end{align}
which shows that an upper bound on $\max_{z\in\gamma_{z_1,z_2}}
\norm{[Df(z)]}$ will give us one of the desired inequalities.

The second inequality is achieved using that
\begin{align}\label{e:prf_upper_bnd} \nonumber
d_g(f^{-1}(f(z_1)),f^{-1}(f(z_2)))  & = \int_0^1 \norm{\frac{d}{dt}
f^{-1}(\gamma_{f(z_1),f(z_2)}(t))}_g dt \\ \nonumber
& \leq \int_0^1 \norm{[Df^{-1}]} \norm{\dot{\gamma}_{f(z_1),f(z_2)} (t)}_h dt \\
& \leq \max_{w\in\gamma_{f(z_1),f(z_2)}} \norm{[Df(w)]^{-1}}
d_h(f(z_1),f(z_2)) .
\end{align}
To use this, we thus need to upper bound
$\max_{w\in\gamma_{f(z_1),f(z_2)}} \norm{[Df(w)]^{-1}}$.

In the remainder of this proof, we show how bounds on
$\max_{z\in\gamma_{z_1,z_2}} \norm{[Df(z)]}$ and
$\max_{w\in\gamma_{f(z_1),f(z_2)}} \norm{[Df(w)]^{-1}}$ can be
derived from (\ref{e:first_bound_on_norm_mu-f*nu}).

First, we take an orthonormal basis $E=\set{e_1,e_2} \subset
T_{z}\M$. That is,
\begin{equation}\label{e:ortho_basis_e_i}
    e_k^i e_\ell^j g_{ij} = e_k^i e_\ell^j \wt{\mu}(z) \delta_{i,j} =
\delta_{k,\ell}.
\end{equation}
Similarly we take orthonormal basis $B = \set{b_1,b_2} \subset
T_{w}\N$.

We will denote the matrix $[Df] = [Df_{z}]$ representing the
differential $Df_z$ of $f$ at the point $z$, in the bases $E,B$. The
norm of $Df_z$ is the induced norm
$$\norm{Df_z} = \max_{\xi \in T_{z}\M, \xi \ne 0}\frac{\norm{Df(\xi)}_{\wt{h}(f(z))}}{\norm{\xi}_{\wt{g}(z)}}.$$

Writing the tensor $\wt{g}$ in the basis $E$ we get the Euclidean
form $\wt{g}(z) = de_1\otimes de_1 + de_2\otimes de_2$, and the
tensor $\wt{h}$ in the basis $B$ will have the same form $\wt{h}(w)
= db_1\otimes db_1 + db_1\otimes db_2$. The pull-back $f^*\wt{h}(z)$
will have the form $f^*\wt{h}(z) =
\parr{[Df_z]^t[Df_z]}_{ij} de_i\otimes de_j$. Therefore in the basis $E$
we have $$\norm{\wt{g}-f^* \wt{h}}_{\wt{g}(z)} =
\norm{Id-[Df]^t[Df]}_F,$$ where $Id$ is the $2\times 2$ identity
matrix, and $\norm{\cdot}_F$ denotes the Frobenius norm.

Writing the singular value decomposition of $[Df]$ we have
$$[Df] = Q \, \mbox{diag}
\parr{\sigma_1(z),\sigma_2(z)}R^T,$$ where $Q,R$ are orthogonal and
$\sigma_1\leq\sigma_2$ are the respective singular values. Then we
have
\begin{equation}\label{e:second_bound_on_norm_mu-f*nu}
    \norm{\wt{\mu}-f^*\wt{\nu}}_{g(z)}^2 = \norm{Id - [Df]^T [Df]}^2_F =
    \norm{R\parr{Id -  \left(
                              \begin{array}{cc}
                                (\sigma_1(z))^2 & 0 \\
                                0 & (\sigma_2(z))^2 \\
                              \end{array}
                            \right)
    }R^T}^2_F=(1-\sigma_1(z)^2)^2 + (1-\sigma_2(z)^2)^2,
\end{equation}

Using
(\ref{e:first_bound_on_norm_mu-f*nu}),(\ref{e:second_bound_on_norm_mu-f*nu})
we get
$$\abs{1-\sigma_2(z)^2}\leq C_2 \eps^{1/3}.$$
From this last bound on $\sigma_2(z)$ for $z\in \Omega_{z_0,R}$ there
exists a constant $C_3>0$ such that
$$0 \leq  \sigma_2(z) \leq 1+ C_3 \eps^{1/3}.$$
Since $\norm{[Df]}= \sigma_2$, this is the desired estimate for the first inequality.

For the second inequality, we need to bound $\norm{[Df]^{-1}}= \sigma_1^{-1}$. The
computation (\ref{e:second_bound_on_norm_mu-f*nu}) shows that
$$\abs{1-\sigma_1(z)^2}\leq C_2 \eps^{1/3}\,,$$
from which we obtain
$$0 \leq  \frac{1}{\sigma_1(z)}\leq 1+ C_4 \eps^{1/3}\,,$$
which concludes our argument.

\end{proof}

\section{Experimental validation and comments}
\label{s:examples}
In this section we perform experimental validation of our
algorithms. We have tested and experimented with our algorithms on four
different data-sets:
\begin{enumerate}
\item
\textsl{Non-rigid World data-set \cite{BBK06b}.} This data-set was
distributed by Bronstein,Bronstein and Kimmel and was specifically
constructed for evaluating shape comparison algorithms in the
scenario of non-rigid shapes; it contains meshes of different
objects (cats, dogs, wolves, humans, etc...) in different poses. We
compare our results on this data-set to the Gromov-Hausdorff
algorithm suggested in \cite{BBK06,BBK06b}.

\item
\textsl{SHREC 2007 Watertight Benchmark \cite{Giorgi07}} This
data-set contains meshes of several objects within a given \emph{semantic}
class for several different classes, such as chairs, 4-legged animals, humans, etc...
It is more challenging for
isometric-invariant matching algorithms since most of the
objects are far from isometric to the objects in the same
semantic class, for example the 4-legged animals class contains a
giraffe and a dog.

\item
\textsl{Synthetic.} We constructed this data-set to test the effect
of the ``size'' parameter $R$ on the distance behavior.

\item
\textsl{Primate molar teeth.} This data-set originates
from a real biological problem/application; it consists of molar teeth
surface for different primates. It was communicated to us by
biologists who compare these shapes for
characterization and classification of mammals.

\end{enumerate}

\begin{rem}
For all data-sets, we scaled the meshes to have unit area, because our goal is to compare
surfaces solely based on shape, regardless of size.
\end{rem}

\textbf{Non-rigid World data-set and comparison to
Gromov-Hausdorff-type distance.}
In the first experiment we ran our sphere-type algorithm to determine
conformal Wasserstein distances for all pairs in the Non-rigid World
data-set, distributed by Bronstein, Bronstein and Kimmel \cite{BBK06,BBK06b,BBK2007non_rigid_book};
we compare the results to those obtained using the code for the (symmetrized)
partial embedding Gromov-Hausdorff (speGH) distance distributed by the same authors.
The speGH distance has been used with great success in surface comparison
\cite{BBK06,BBK06b, BBK2007non_rigid_book} and can handle situations beyond the scope
of our, more limited algorithm, it can e.g., compare surfaces of different genus.
We therefore consider it as a state-of-the-art algorithm. In order to compare our
Conformal Wasserstein distance with speGH for applications of interest to us, we use both
algorithms on Non-rigid World data-set where all the surfaces are scaled to have unit area
and where 100 sample points are chosen on each surface.

The Non-rigid World data-set contains meshes of different poses of the following
articulated objects: a centaur, a cat, a dog, a horse, a human female, and two human males
(``Michael'' and ``David'').
%

The two resulting dissimilarity matrices are shown in Figure
\ref{f:GHvsOURS_dissim_mat_and_knn_mat} (a,d). The dissimilarity
matrices are both normalized by translating the minimal
value to zero and scaling the maximal value to one. The color scheme
is Matlab's ``Jet'' where dark blue represents low (close to zero)
value, and dark red indicates high (close to one) values. The
Sphere-type algorithm used $A=0.3$, and a  $30\times 30$ bins discretization with the
convolution kernel $(1,1,1)\otimes(1,1,1)$ (all the sphere-type
examples use these bin settings) to obtain the discretizations
of the conformal density. The timing for running
one comparison $\d^A(\M,\N)$ was 90 seconds on 2.2GHz AMD Opteron
processor. Figure
\ref{f:GHvsOURS_dissim_mat_and_knn_mat} (b-c) and (e-f) shows two nearest neighbors
classifications tests where a white square inside a dark-red area means success and white
square on black area is a failure.

The structure of the dissimilarity
matrix is illustrated in the two plots in Figure \ref{f:knn_tests}.
Figure \ref{f:knn_tests}
(a) shows the classification rates as a function of the number $K$ of
nearest neighbors, where for each fixed $K$ we calculated the
classification rate as follows. For each object we counted how many from
its $K$-nearest neighbors are of the same class. We summed all these numbers
and divided by the total number of possible correct classifications.
The blue curve shows the analysis of
the dissimilarity matrix output by our distance algorithm and
the red curve by the speGH distance code. In (b) we show the ROC curve
where for every $K=1,2,..,10$ we plot the True Positive Rate (TPR), that is
the number of true positives divided by the number of positives, as a function of the
False Positive Rate (FPR), that is the number of false positives divided by the number of negatives.

\begin{figure}[t]
\centering \setlength{\tabcolsep}{0.4cm}
\begin{tabular}{ccc}
\includegraphics[width=0.25 \columnwidth]{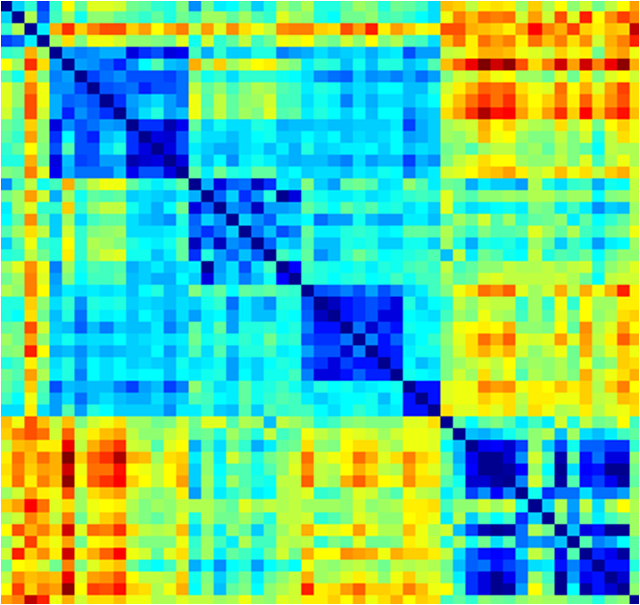} &
\includegraphics[width=0.25 \columnwidth]{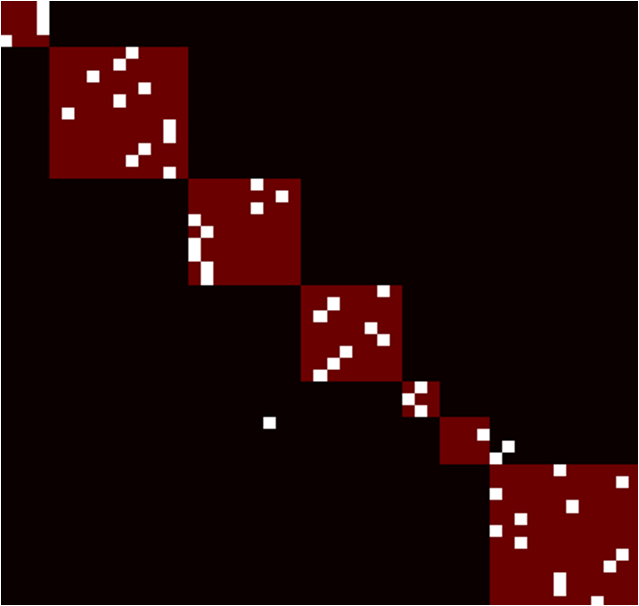} &
\includegraphics[width=0.25 \columnwidth]{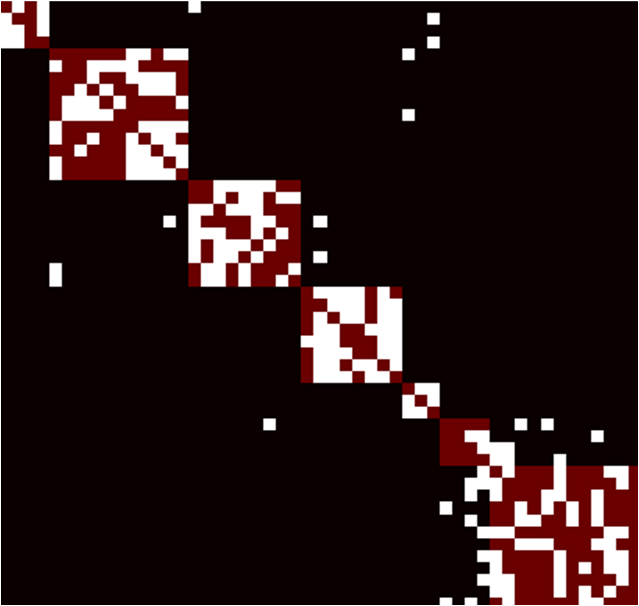} \\
(a) This paper's dissimilarity matrix & (b) $1^{st}$ nearest neighbor  & (c) $5$ nearest neighbors \\
     &  $94\%$ classification rate  & $87\%$ classification rate\\
\includegraphics[width=0.25 \columnwidth]{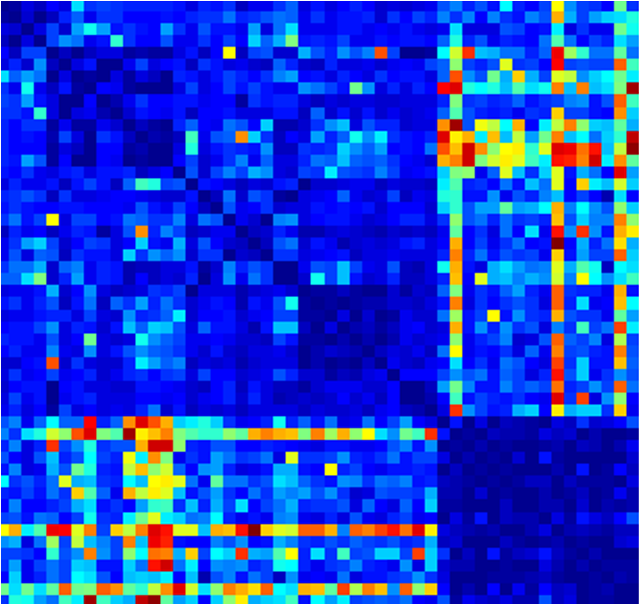} &
\includegraphics[width=0.25 \columnwidth]{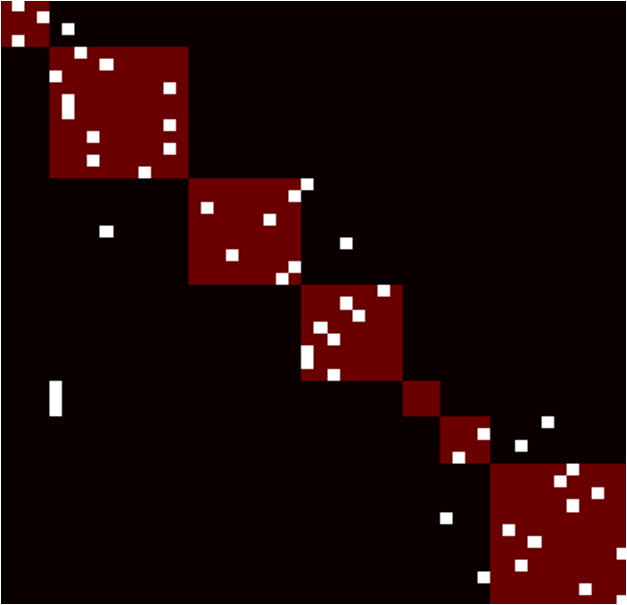} &
\includegraphics[width=0.25 \columnwidth]{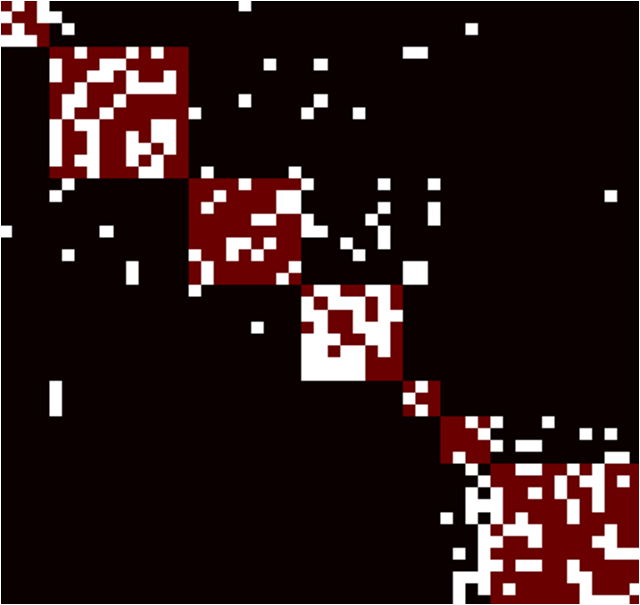} \\
(d) speGH dissimilarity matrix & (e) $1^{st}$ nearest neighbor & (f) $5$ nearest neighbors \\
     & $78\%$ classification rate&  $68\%$ classification rate\\
\end{tabular}
\caption{The dissimilarity matrices for the unit area scaled
Non-rigid World data-set \protect{\cite{BBK06,BBK06b}} calculated
with the conformal Wasserstein (CW) distance given by
our algorithm (a) and the symmetric partial-embedding
Gromov-Hausdorff (speGH) distance \protect{\cite{BBK06,BBK06b}} (d). In the
second column (b,e) we show the ground truth classification matrix
(dark-red) and the first nearest neighbor (white) according to the CW (in b)
and to the speGH (in e) distance for each row. The
third column (c,f) shows the five nearest neighbors (when there
are fewer than five in some category we simply limit ourselves to the number
in that category). Note that white squares should be in dark red
regions to indicate correct classification. Note that with only 100 sampling points,
``Michael'' and ``David'' are not distinguishable, so if a pose of Michael is
among the nearest neighbors of a pose of David, we still count it as a correct classification.}
\label{f:GHvsOURS_dissim_mat_and_knn_mat}
\end{figure}

\begin{figure}[t]
\centering \setlength{\tabcolsep}{0.4cm}
\begin{tabular}{cc}
\includegraphics[width=0.45 \columnwidth]{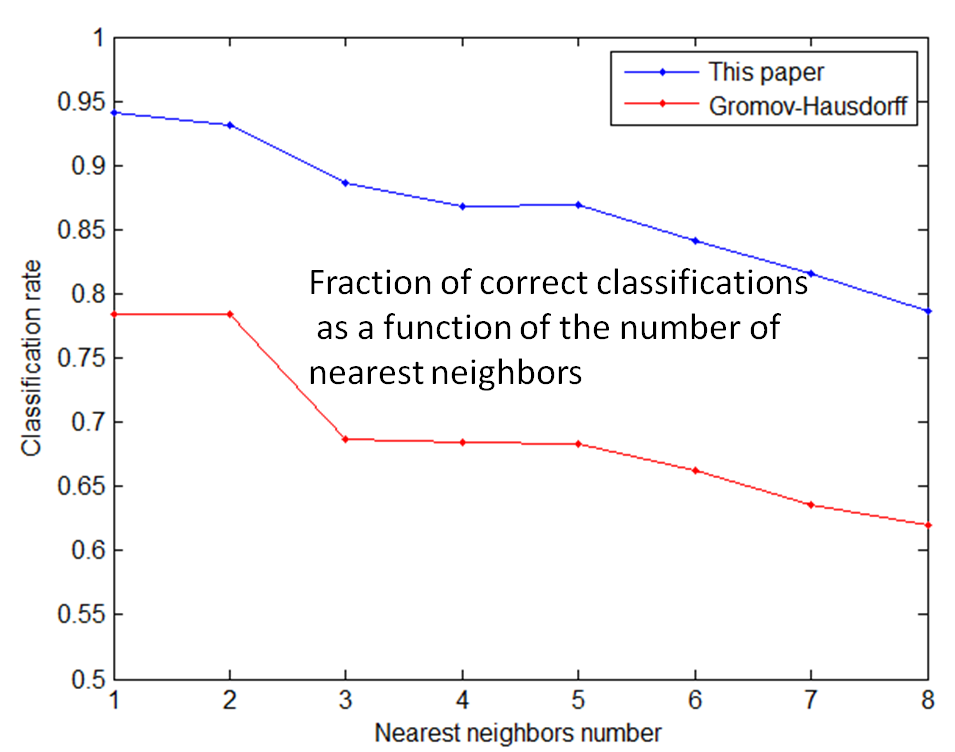} &
\includegraphics[width=0.45 \columnwidth]{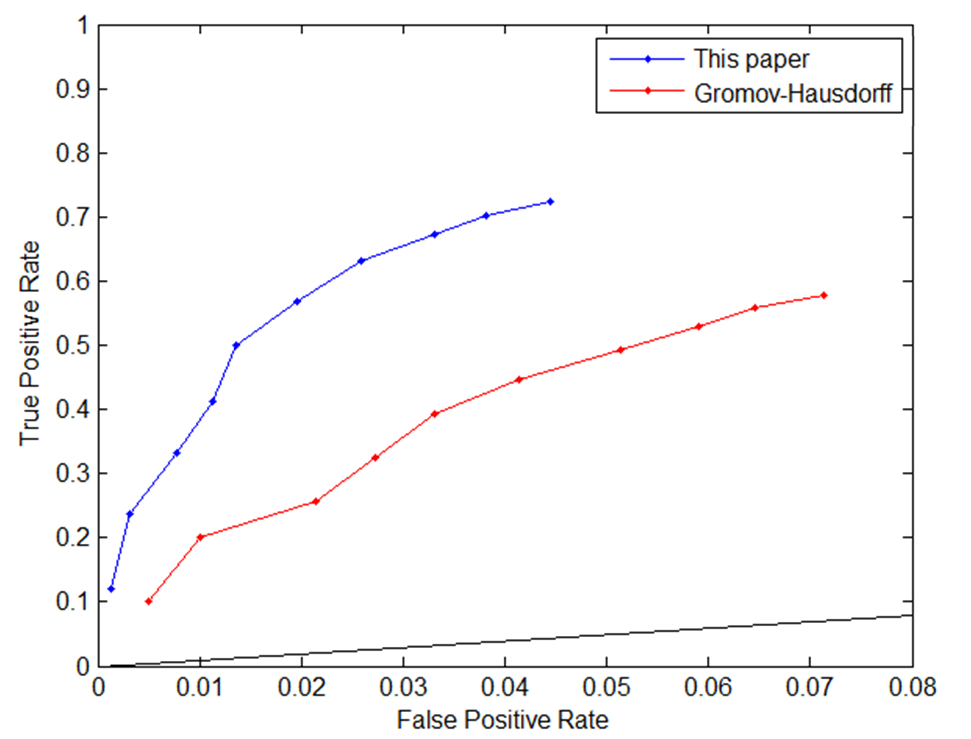}
\\
(a) Correct Classification Rate  & (b) ROC curve (TPR/FPR) \\
\end{tabular}
\caption{Correct Classification Rate and ROC curves for the dissimilarity matrices
produces by our method (blue) and Gromov-Hausdorff-type metric (red). See text for
details.} \label{f:knn_tests}
\end{figure}

\textbf{SHREC 2007 Watertight Benchmark \cite{Giorgi07}.} Our next
experiment deals with a data-set with larger in-class variations;
SHREC 2007 contains 20 categories of models with 20 meshes for each category
 (400 meshes in
total) the categories are, e.g., chairs, 4-legged animals, humans, planes, tables,
etc...

For our experiment, we restricted ourselves to all the meshes of 8 categories that
contained only surfaces of genus zero (since our current algorithm does not
support surfaces of higher genus) and that seemed intrinsically similar; these
categories were: humans, 4-legged animals, ants, hands,
airplanes, teddy-bears, pliers, Armadillos. We ran our
sphere-type algorithm to compute the distance between all pairs. We
tested the ``size'' parameters $A=0.3,0.4,0.5$. The bin
was the same as for the previous data-set. However,
to achieve faster running times (we had about 25,000 comparisons...) we took
only $50$ sample
points. The running time for one pair of objects was around $15$ seconds.

Figure \ref{f:shrek_different_R} shows the dissimilarity matrices (top row)
using the three different values if $A$: $0.3,0.4,0.5$, and the dissimilarity matrix
resulting from combining them:
$$T_d(\mu,\nu) = T_d^{0.3}(\mu,\nu)\cdot T_d^{0.4}(\mu,\nu) \cdot T_d^{0.5}(\mu,\nu).$$
Note that $T_d(\mu,\nu)$ is also a metric and suggests a way to remove the influence
of the size parameter if desired.
The combined distance produced the best classification results as seen in the bottom row,
where for each row the white square shows the nearest neighbor to that object. The dark red areas
represent the different categories. The combined
distance reached the very high classification rate of $95\%$ on this challenging data-set.
Figure \ref{f:knn_tests_shrec} shows for one object in each category its four closest neighbors.
Note the non-rigid nature of some of the objects (e.g., humans, hands), and the substantial deviation
from perfect isometry within class (e.g., 4-legged animals). Figure \ref{f:knn_tests_failure}
demonstrates a partial failure case where although the first two nearest neighbors to the Giraffe
 are within the 4-legged animals category, the third nearest neighbor is the one-armed armadillo,
 which belong to a different category (remember
 that the algorithm is size invariant).

\begin{figure}[h]
\centering \setlength{\tabcolsep}{0.4cm}
\begin{tabular}{cccc}
\includegraphics[width=0.2 \columnwidth]{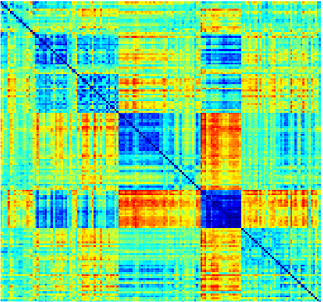} &
\includegraphics[width=0.2 \columnwidth]{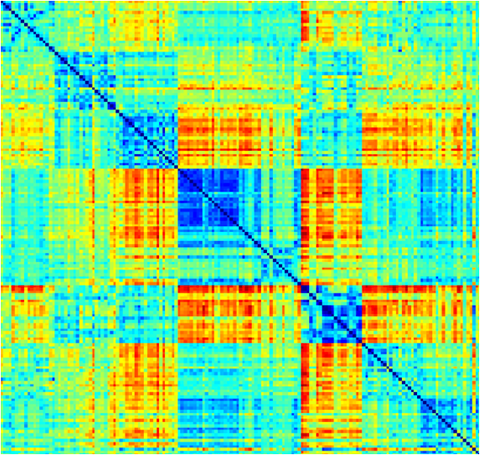}&
\includegraphics[width=0.2 \columnwidth]{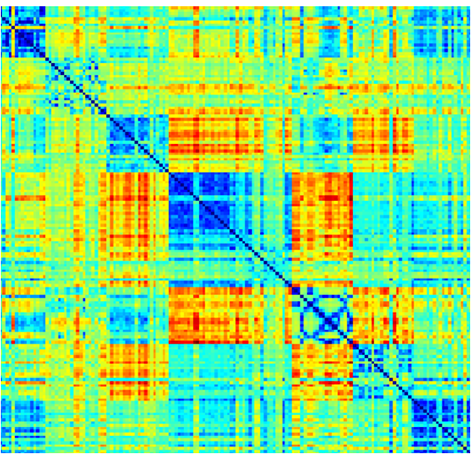}&
\includegraphics[width=0.2 \columnwidth]{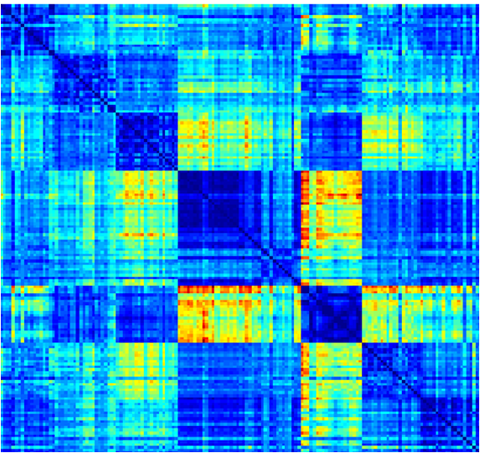}\\
 $R=0.3$ & $R=0.4$  &  $R=0.5$ & combined $R=0.3,0.4,0.5$\\
\includegraphics[width=0.2 \columnwidth]{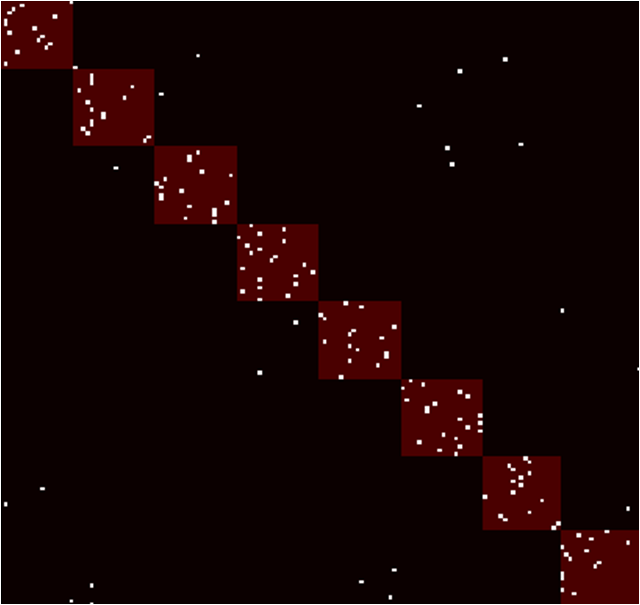} &
\includegraphics[width=0.2 \columnwidth]{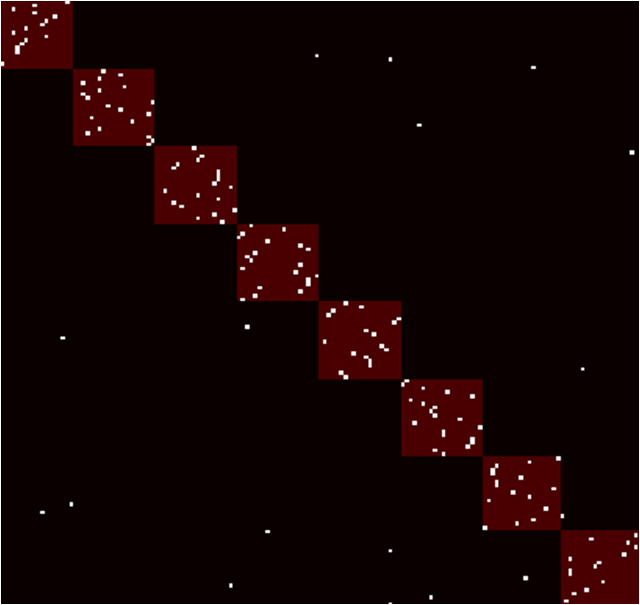}&
\includegraphics[width=0.2 \columnwidth]{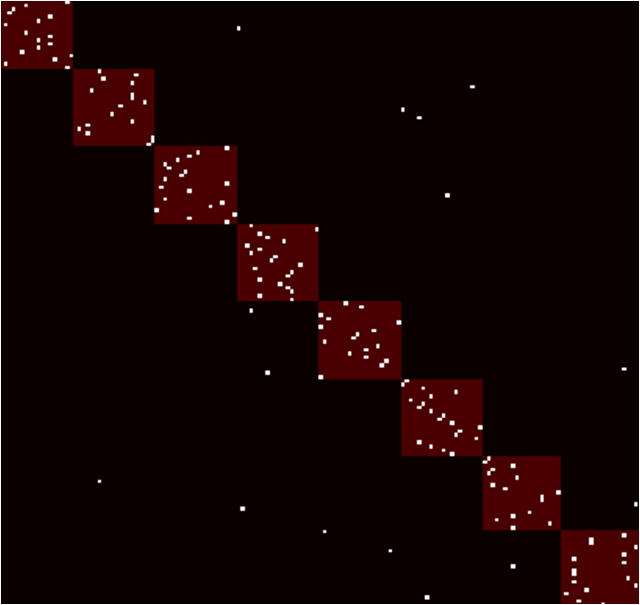}&
\includegraphics[width=0.2 \columnwidth]{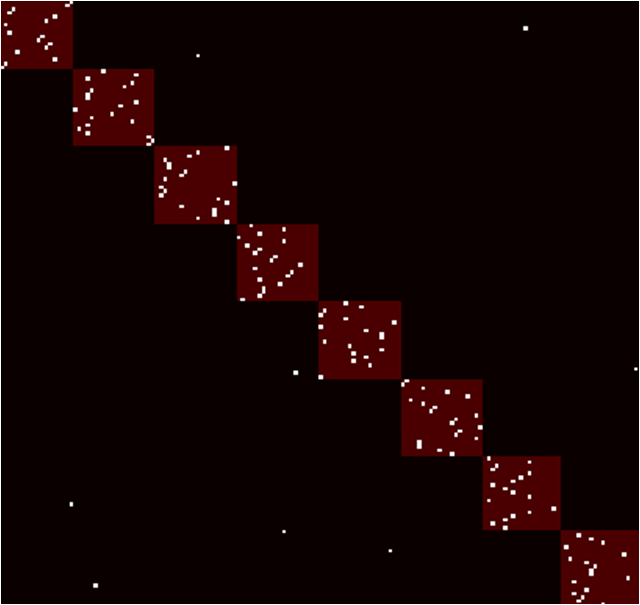}\\
 classification rate: $85\%$ & $89\%$  &  $90\%$ & $95\%$ \\

\end{tabular}
\caption{Dissimilarity matrices (top row) for the SHREC 2007 watertight Benchmark \protect{\cite{Giorgi07}}
with different size parameter: $R=0.3,0.4,0.5$ and their combination (see text for details).
Bottom row shows the first nearest neighbor classification test where white squares denote
the nearest neighbor of that row's object and the dark red area represent correct category classification. } \label{f:shrek_different_R}
\end{figure}

\begin{figure}[h]
\centering \setlength{\tabcolsep}{0.4cm}
\includegraphics[width=0.90\columnwidth]{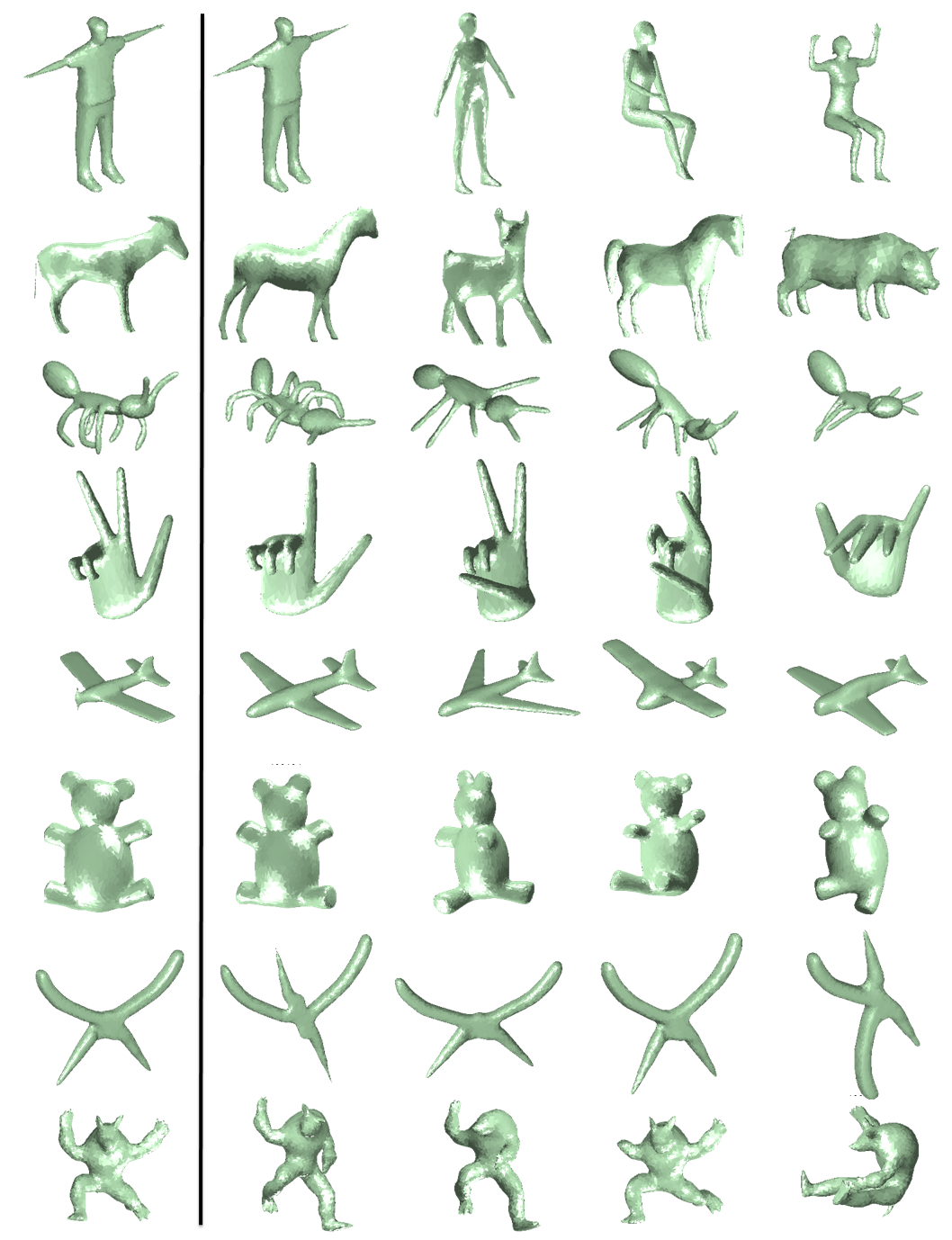}
\caption{SHREC 2007 watertight Benchmark \protect{\cite{Giorgi07}}:
we show the four closest neighbors to each of the objects on the
left side (we show one example from each category). }
\label{f:knn_tests_shrec}
\end{figure}

\begin{figure}[h]
\centering \setlength{\tabcolsep}{0.4cm}
\includegraphics[width=\columnwidth]{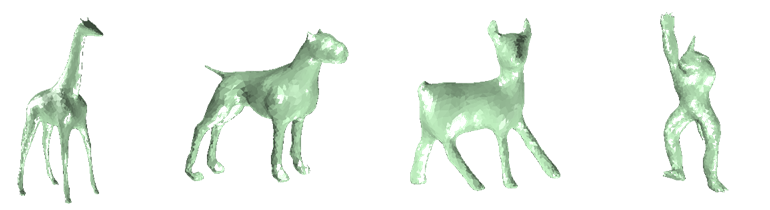}
\caption{SHREC 2007 watertight Benchmark \protect{\cite{Giorgi07}}:
a partially failure case where the giraffe has correct two nearest
neighbors, however its $3^{rd}$ nearest neighbor is a one armed
armadillo. Remember that our matching is scale invariant.}
\label{f:knn_tests_failure}
\end{figure}

\textbf{Synthetic data-set.} This experiment was designed to test the
influence of the size parameter $R$ on the behavior of the distance.
The surfaces we compared are shown in the top row of Figure \ref{f:bumps},
they each have three small bumps, in different positions. At first sight,
one might think that for small $R$, the distance $\d^R(\M,\N)$ based on comparing
neighborhoods of ``size'' $R$, would have trouble distinguishing these objects
from each other. However, one should keep in
mind that the uniformization process is a global one: changing the metric in one
region of the surface would effect the uniformization of other regions
(but influence would decay like appropriate Green's function). Figure \ref{f:bumps}
plot the distance of disk-type model $A$ to the four others, for different $R$ values.
We also show hyperbolic neighborhoods
corresponding to the three different size parameters (color coded the same way as the graphs).
We scaled the distances to have maximum of one (since smaller $R$ results naturally in smaller
distances). Note that even the smallest size value $R=0.25$ already distinguishes between
the different models. Further note, that larger size parameters such as $R=0.75$ results in
slightly more intuitive linear distance behavior. Overall, the size parameter
$R$ does affect the distance, but not in a very significant way.

\begin{figure}[h]
\centering \setlength{\tabcolsep}{0.4cm}
\includegraphics[width=\columnwidth]{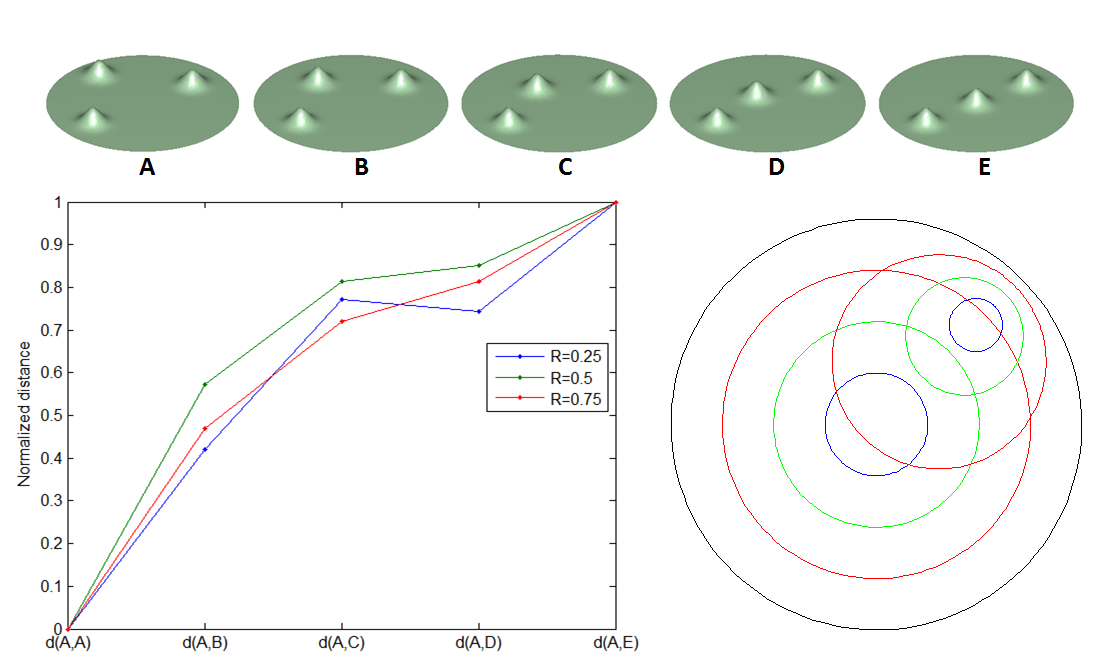}
\caption{Testing influence of the size parameter $R$ to sperate
identical models with small features. We compare the distances of
the disk model marked with $A$ to all other models $A-E$ using three
different size parameters $R=0.25,0.5,0.75$. We also show examples
of hyperbolic disk of these radii as used by our disk-type algorithm
(the are color coded like the graph lines).} \label{f:bumps}
\end{figure}

\begin{figure}[h]
\centering
\begin{tabular}{rcccc}
\includegraphics[width=0.2\columnwidth]{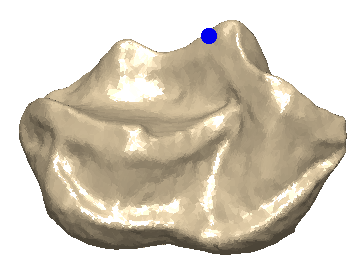} &
\includegraphics[width=0.2\columnwidth]{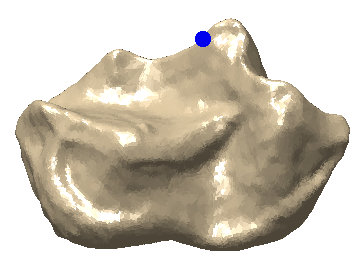} &
\includegraphics[width=0.2\columnwidth]{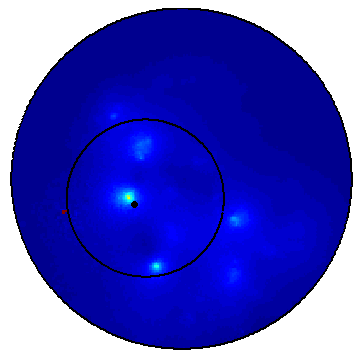} &
\includegraphics[width=0.2\columnwidth]{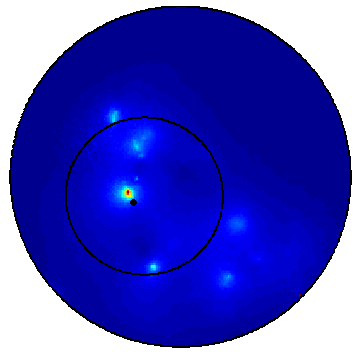} &
\includegraphics[width=0.2\columnwidth]{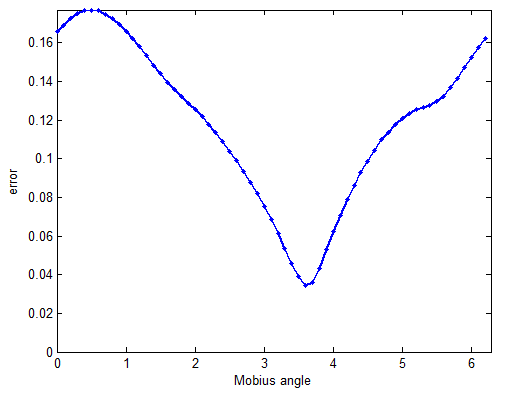} \\
(a) &&&& Good pair (a)\\
\includegraphics[width=0.2\columnwidth]{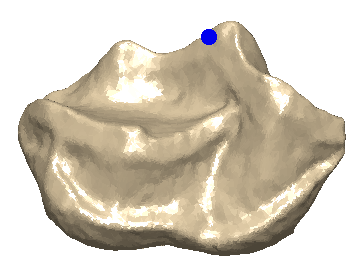} &
\includegraphics[width=0.2\columnwidth]{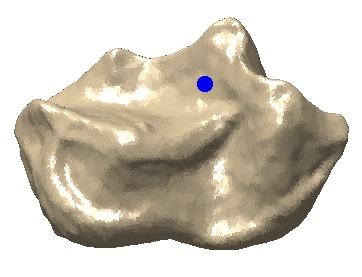} &
\includegraphics[width=0.2\columnwidth]{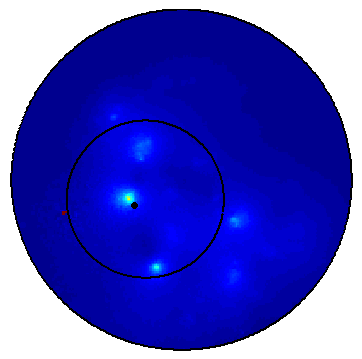} &
\includegraphics[width=0.2\columnwidth]{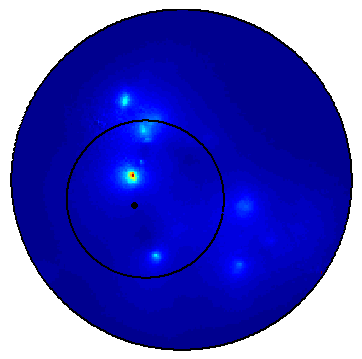} &
\includegraphics[width=0.2\columnwidth]{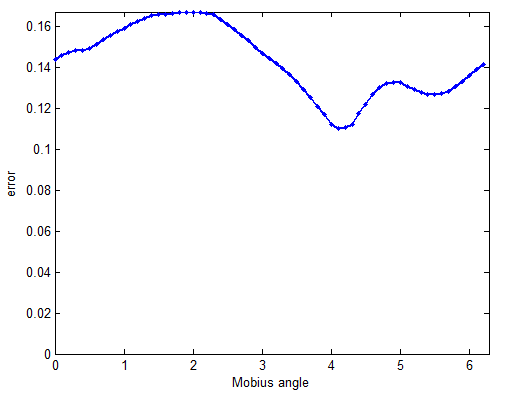} \\
(b) & & &  & Erroneous pair (b)
\end{tabular}
\caption{Calculation of the local distance
$d^R_{\mu,\nu}(\cdot,\cdot)$ between pairs of points on two
different surfaces (each row shows a different pair of points; the
two surfaces are the same in the top and bottom rows). The first row
shows a ``good'' pair of points together with the alignment of the
conformal densities $\mu,m^*\nu$ based on the best M\"{o}bius
transformation $m$ minimizing $d^R_{\mu,\nu}$. The plot of this
latter integral as a function of $m$ (parameterized by $\sigma \in
[0,2\pi)$, see (\ref{e:disk_mobius})) is shown in the right-most
column. The second row shows a ``bad'' correspondence which indeed
leads to a higher local distance
$d^R_{\mu,\nu}$.}\label{fig:good_bad_pair_correspondence}

\end{figure}

\textbf{Primate molar teeth.} Finally, we present a few experimental
results related to a biological application; in a case study of the
use of our approach to the characterization of mammals by the
surfaces of their molars, we compare high resolution scans of the
masticating surfaces of molars of several lemurs, which are small
primates living in Madagascar. Traditionally, biologists
specializing in this area carefully determine landmarks on the tooth
surfaces, and measure characteristic distances and angles involving
these landmarks. A first stage of comparing different tooth surfaces
is to identify correspondences between landmarks.  Figure
\ref{fig:good_bad_pair_correspondence} illustrates how
$d^R_{\mu,\nu}(z,w)$ (disk-type) can be used to find corresponding
pairs of points on two surfaces by showing both a ``good'' and  a
``bad'' corresponding pair. The left two columns of the figure show
the pair of points in each case; the two middle columns show the
best fit after applying the minimizing M\"{o}bius on the
corresponding disk representations; the rightmost column plots $
\int_{\Omega_{z_0,R}} \,|\,\mu(z) -
(m_{z_0,w_0,\sigma}^*\nu)(z)\,|\, d\vol_H(z)$, the value of the
``error'', as a function of parameter $\sigma$, parameterizing the
\Mbs transformations that map a given point $z_0$ to another given
point $w_0$ (they are parameterized over $S^1$, see Lemma 3.5 in
\cite{Lipman_Daubechies:2010:polytimesurfcomp} ). The ``best''
corresponding point $w_0$ for a given $z_0$ is the one that produces
the lowest minimal value for the error, i.e. the lowest
$d^R_{\mu,\nu}(z_0,w_0)$.

Figure \ref{fig:120_corrs} show the top 120 most consistent
corresponding pairs (in groups of 20) for two molars belonging to
lemurs of different species. Corresponding pairs are indicated by
highlighted points of the same color. These correspondences have
surprised the biologists from whom we obtained the data sets; their
experimental measuring work, which incorporates finely balanced
judgment calls, had defied earlier automatization attempts.

Once the differences and similarities between molars from different animals
have been quantified, they can be used (as part of an approach) to
classify the different individuals. Figure
\ref{fig:distance_graph_embedded} illustrates a preliminary result
that illustrates the possibility of such classifications based
on the distance operator between surfaces introduced in this paper.
The figure illustrates the pairwise distance matrix for eight molars,
coming from individuals in four different species (indicated by color).
The clustering was based on only the distances between the molar surfaces;
it clearly agrees with the clustering by species, as communicated to us
by the biologists from whom we obtained the data sets.

One final comment
regarding the computational complexity of our method. There are two main
parts: the preparation of the distance matrix $d_{ij}$ and the
linear programming optimization. For the linear programming
part we used a Matlab interior point implementation with $N^2$
unknowns, where $N$ is the number of points spread on the
surfaces. In our experiments, the optimization typically terminated
after $15-20$ iterations for $N=150-200$ points, which took about 2-3
seconds. The computation of the similarity distance $d_{ij}$
took longer, and was the bottleneck in our experiments. We separate
the disk-type and the sphere-type algorithms.

For the sphere-type algorithm if we use $N=\L$ sample points ($Q$)
on each surface (see Section \ref{s:generalization_to_sphere_type})
then for each pair we compare the difference (using fixed size bin
structure)
 of the
discrete conformal densities for fixed number of M\"{o}bius
transformations. This results in $O(N^3)$ algorithm for computing
the distance matrix $d_{ij}$. In our experiments the total distance
computation time (including linear programming optimization) was
around $15$ seconds for $N=\L=50$ (in the SHREC 2007 data-set), to
$90$ second per comparison for $N=\L=100$ (in the Non-rigid world
data-set). In the sphere-type examples we have used 2.2GHz AMD
Opteron processor. The sphere-type algorithm was coded completely in
Matlab and was not optimized.

For the disk-type algorithm, if we spread $N$ points on each
surface, and use them all to interpolate the conformal factors
$\Gamma_\mu, \Gamma_\nu$, if we use $P$ points in the integration
rule, and take $L$ points in the M\"{o}bius discretization (see
Section \ref{s:the_discrete_case_implementation} for details) then
each approximation of  $d^R_{\mu,\nu}(z_i,w_j)$ by
(\ref{e:discrete_approx__d_mu_nu(z_i,w_j)_bis}) requires $O(L \cdot
P \cdot N)$ calculations, as each evaluation of
$\Gamma_\mu,\Gamma_\nu$ (Thin-Plate Spline approximation we use to
interpolate the conformal densities, described in
Appendix~\ref{a:appendix B}) takes $O(N)$ and we need $L\cdot P$ of
those. Since we have $O(N^2)$ distances to compute, the computation
complexity for calculating the similarity distance matrix $d_{ij}$
is $O(L\cdot P \cdot N^3)$. This step was coded in C++ (and
therefore the time difference to the sphere-type case) and took
$3.5$ seconds for $N=50$, $51$ seconds for $N=100$, under $5$
minutes for $N=150$ and two hours for $N=300$ (in these examples we
took $P\approx N$). However, also in this case we have not
 optimized the algorithm and we believe these times can be
reduced significantly. The disk-type algorithm ran on Intel Xeon (X5650) 2.67GHz processor.


\begin{figure}[h]
\centering
\begin{tabular}{cccc}
\includegraphics[width=0.3\columnwidth]{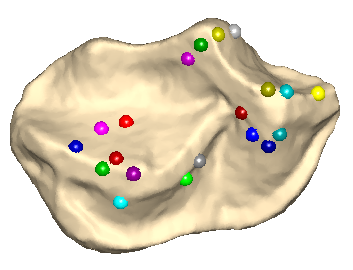} &
\includegraphics[width=0.3\columnwidth]{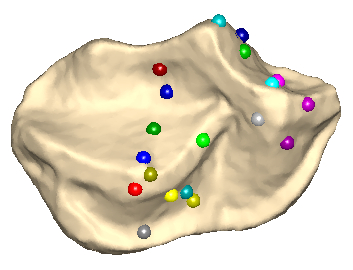} &
\includegraphics[width=0.3\columnwidth]{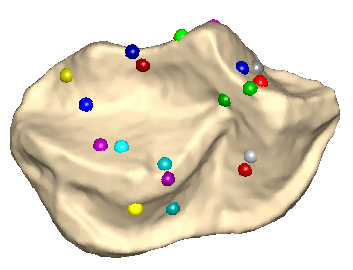} \\
\includegraphics[width=0.3\columnwidth]{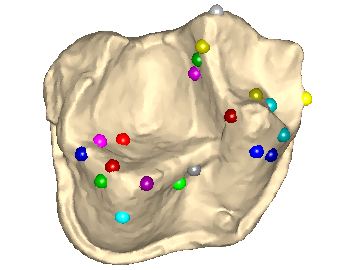} &
\includegraphics[width=0.3\columnwidth]{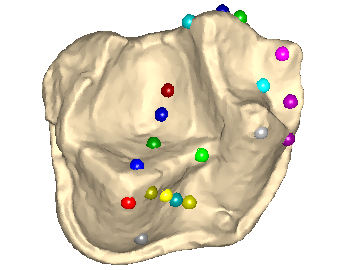} &
\includegraphics[width=0.3\columnwidth]{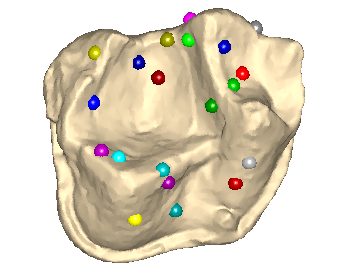} \\
\hline
\includegraphics[width=0.3\columnwidth]{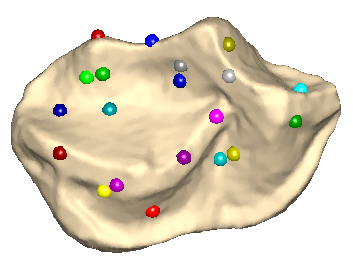} &
\includegraphics[width=0.3\columnwidth]{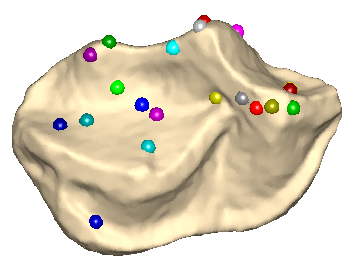} &
\includegraphics[width=0.3\columnwidth]{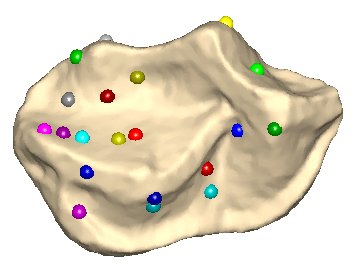} \\
\includegraphics[width=0.3\columnwidth]{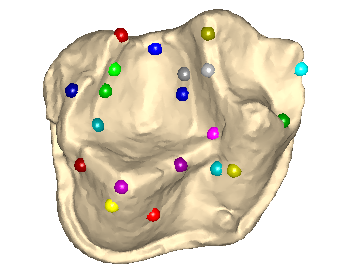} &
\includegraphics[width=0.3\columnwidth]{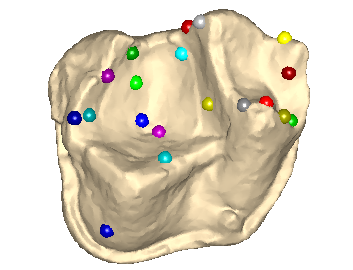} &
\includegraphics[width=0.3\columnwidth]{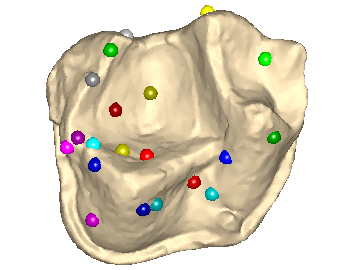} \\
\end{tabular}
\caption{The top 120 most consistent corresponding pairs between two
molar teeth models.} \label{fig:120_corrs}
\end{figure}

\begin{figure}[h]
\centering
\includegraphics[width=0.9\columnwidth]{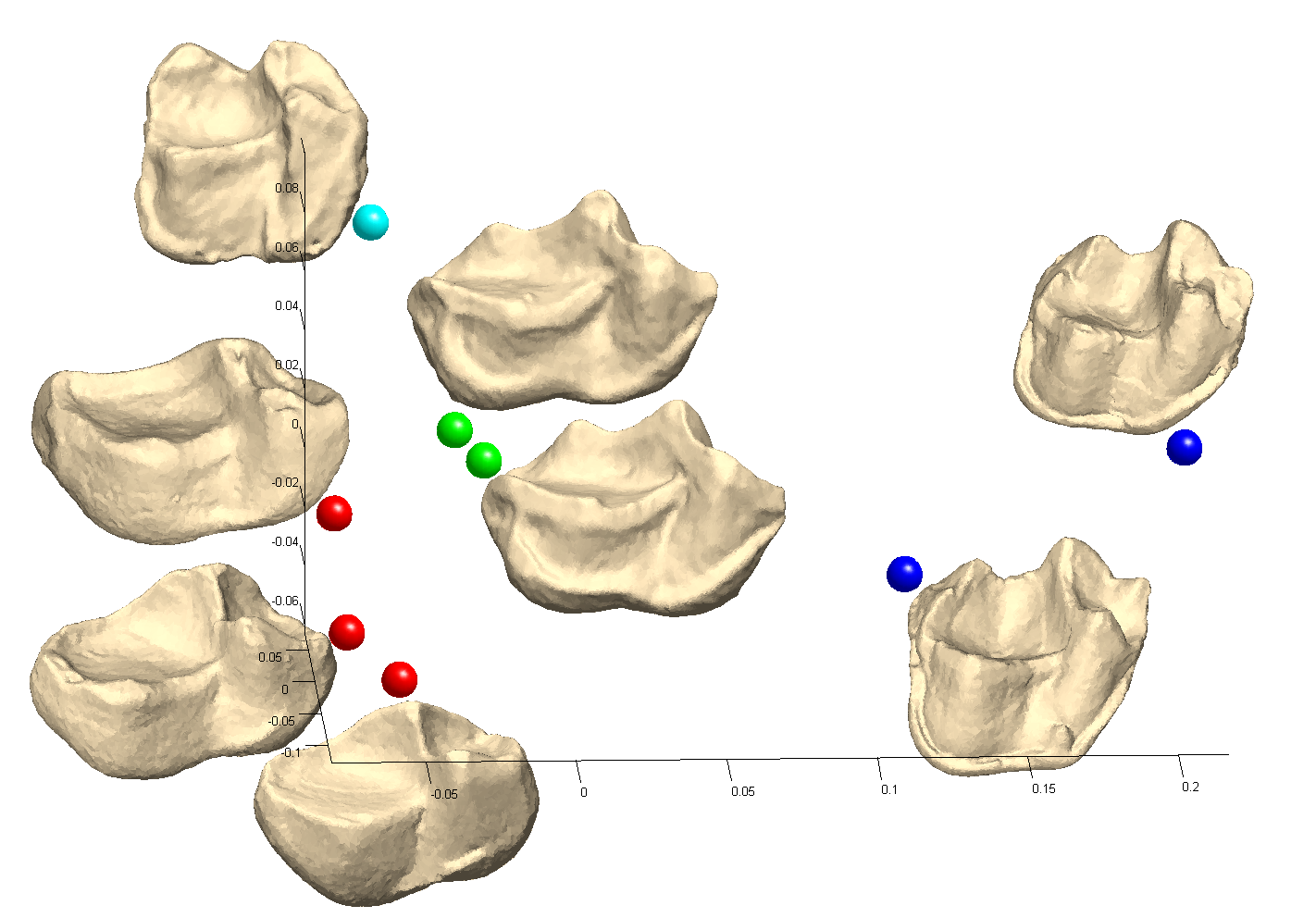}
\caption{Embedding of the distance graph of eight teeth models using
multi-dimensional scaling. Different colors represent different
lemur species. The graph suggests that the geometry of the teeth
might suffice to classify species.}
\label{fig:distance_graph_embedded}
\end{figure}


\section{Acknowledgments}
The authors would like to thank C\'{e}dric Villani and Thomas
Funkhouser for valuable discussions. We are grateful to Jukka
Jernvall, Stephen King, and Doug Boyer for providing us with the
tooth data sets, and for many interesting comments. We would like to
thank the anonymous reviewers that challenged us to improve our
manuscript with excellent comments and suggestions. ID gratefully
acknowledges (partial) support for this work by NSF grant
DMS-0914892, and by an AFOSR Complex Networks grant; YL thanks the
Rothschild foundation for postdoctoral fellowship support.

\bibliographystyle{amsplain}
\bibliography{kantorovich_for_surfaces_II}

\appendix
\renewcommand{\thesection}{ \Alph{section}}
\section{}
\label{a:appendix A}
\renewcommand{\thesection}{\Alph{section}}
We prove Theorem \ref{thm:convergence_of_numerical_quadrature}. We
start with a simple lemma showing that all M\"{o}bius
transformations restricted to $\Omega_{0,T}$, $T < \infty$, are
Lipschitz with a universal constant, for which we provide an upper
bound.
\begin{lem}\label{lem:mobius_is_lipschitz}
A M\"{o}bius transformation $m \in \Md$ restricted to
$\Omega_{0,T}$, $T<\infty$ is Lipschitz continuous with Lipschitz
constant $C_m \leq \frac{1-|a|^2}{(1-r_T|a|)^2}$, where $a =
m^{-1}(0)$ and $r_T=tanh(T)$.
\end{lem}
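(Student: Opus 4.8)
The plan is to reduce the Lipschitz bound to a uniform pointwise bound on the Euclidean derivative $|m'|$ over $\Omega_{0,T}$, and then integrate along chords, which is legitimate because $\Omega_{0,T}=\set{z\mid |z|\le r_T}$ is a Euclidean disk and hence convex.

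First I would use the normalization $a=m^{-1}(0)$ to pin down the form of $m$. By \eqref{e:disk_mobius}, any element of $\Md$ that sends $a$ to $0$ has the form $m(z)=e^{\bfi\theta}\frac{z-a}{1-\bar a z}$ for some $\theta\in[0,2\pi)$. A direct differentiation gives $m'(z)=e^{\bfi\theta}\frac{1-|a|^2}{(1-\bar a z)^2}$, so that $|m'(z)|=\frac{1-|a|^2}{|1-\bar a z|^2}$.

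Next, for $z\in\Omega_{0,T}$ one has $|z|\le r_T=\tanh(T)$, hence $|1-\bar a z|\ge 1-|a|\,|z|\ge 1-r_T|a|>0$, and therefore $|m'(z)|\le \frac{1-|a|^2}{(1-r_T|a|)^2}$ uniformly for $z\in\Omega_{0,T}$. Finally, given $z_1,z_2\in\Omega_{0,T}$, the segment $[z_1,z_2]$ is contained in $\Omega_{0,T}$ by convexity, so $m(z_2)-m(z_1)=\int_0^1 m'\big(z_1+t(z_2-z_1)\big)(z_2-z_1)\,dt$, whence $|m(z_2)-m(z_1)|\le \big(\sup_{\Omega_{0,T}}|m'|\big)\,|z_2-z_1|\le \frac{1-|a|^2}{(1-r_T|a|)^2}|z_2-z_1|$, which is the asserted estimate with $C_m\le \frac{1-|a|^2}{(1-r_T|a|)^2}$.

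I do not expect any genuine obstacle here; the only points requiring minor care are (i) invoking $a=m^{-1}(0)$ to fix the normalized representation of $m$ (the rotation parameter $\theta$ plays no role in $|m'|$), and (ii) noting that $\Omega_{0,T}$ is Euclidean-convex so that the fundamental theorem of calculus along a chord applies without the path leaving the domain.
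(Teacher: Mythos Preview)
Your proof is correct. The paper takes a slightly different, purely algebraic route: instead of bounding $|m'|$ and integrating along a chord, it computes the difference directly,
\[
m(z)-m(w)=e^{\bfi\theta}\,\frac{(z-w)(1-|a|^2)}{(1-\bar a z)(1-\bar a w)},
\]
and then bounds the denominator by $(1-r_T|a|)^2$ exactly as you do. The two arguments are essentially dual: the paper's difference quotient has denominator $(1-\bar a z)(1-\bar a w)$, whereas your derivative has $(1-\bar a z)^2$, but both are controlled by the same estimate $|1-\bar a z|\ge 1-r_T|a|$ on $\Omega_{0,T}$. The algebraic version avoids invoking convexity of $\Omega_{0,T}$ and the fundamental theorem of calculus; your version is the more ``generic'' Lipschitz-from-derivative argument and makes explicit why the Euclidean geometry of $\Omega_{0,T}$ matters. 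Either way the bound $C_m\le \frac{1-|a|^2}{(1-r_T|a|)^2}$ falls out immediately.
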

\begin{proof}
Denote $m(z)=e^{\bfi \theta}\frac{z-a}{1-z\bbar{a}}$. Then, for
$z,w\in \Omega_{0,T}$ we have
\begin{align*}
\babs{m(z)-m(w)} & \leq \babs{e^{\bfi \theta}\frac{z-a}{1-z\bbar{a}}
- e^{\bfi \theta}\frac{w-a}{1-w\bbar{a}} } \leq
\babs{\frac{(z-a)(1-w\bbar{a})-(w-a)(1-z\bbar{a})}{(1-z\bbar{a})(1-w\bbar{a})}}\\
& \leq \babs{\frac{(z-w)(1-\abs{a}^2)}{(1-z\bbar{a})(1-w\bbar{a})}}
\leq |z-w|\frac{1-\abs{a}^2}{(1-r_T\abs{a})^2}.
\end{align*}
\end{proof}

Next we prove: \\
{\bf Theorem \ref{thm:convergence_of_numerical_quadrature}} {\em For
Lipschitz continuous $\mu,\nu$,
\[
\left|\,d^R_{\mu,\nu}(z_i,w_j)- \min_{m(z_i)=w_j}\sum_k \,\alpha_k
\,\left|\,\mu(\widetilde{m}_i (p_k)) - \nu(m(\widetilde{m}_i
(p_k)))\,\right|\, \right| \leq C\,\varphi_E\left( \set{p_k}
\right)~,
\]
where the constant $C$ depends only on $\mu,\nu,R$. }
\begin{proof}
First, denote $f(z) = \babs{\mu(\wt{m}_i (z)) - \nu(m(\wt{m}_i
(z)))}$. Then,
\begin{eqnarray}\label{e:approx_quad_eq1}
\babs{\int_{\Omega_{0,R}}f(z)d\vol_H(z)-\min_{m(z_i)=w_j}\sum_k
\alpha_k
f(p_k)} & \leq \sum_k \int_{\Delta_k} \babs{f(z)-f(p_k)}d\vol_H(z)  \\
\nonumber & \leq \omega^{\Omega_{0,R}}_{f}\parr{\varphi_E
\parr{\set{p_k}}} \int_{\Omega_{0,R}}d\vol_H,
\end{eqnarray}
where $\Delta_k$ are the intersections of $\Omega_{0,R}$ with the Euclidean Voronoi cells defined by the
centers $p_k$, and the modulus of
continuity $\omega^{\Omega_{0,R}}_{f}\parr{h} = \sup_{|z-w|<h;
z,w\in \Omega_{0,R}}\abs{f(z)-f(w)}$ is used. Note that
\begin{equation}\label{e:approx_quad_eq2}
    \omega^{\Omega_{0,R}}_{f} \leq \omega^{\Omega_{0,R}}_{\mu \circ \wt{m}_i} +
\omega^{\Omega_{0,R}}_{\nu \circ m \circ \wt{m}_i }.
\end{equation}
Denote the Lipschitz constants of $\mu,\nu$ by $C_\mu,C_\nu$,
respectively. From Lemma \ref{lem:mobius_is_lipschitz} we see that,
for $z,w \in \Omega_{0,R}$,
\begin{align*}
\babs{\mu(\wt{m}_i(z))-\mu(\wt{m}_i(w))} & \leq C_\mu
\babs{\wt{m}_i(z) - \wt{m}_i(w)}  \leq C_\mu
\frac{1-|a|^2}{(1-r_R|a|)^2} \abs{z-w}  \leq C_\mu
\frac{1}{(1-r_R)^2} \abs{z-w},
\end{align*}
which is independent of $\wt{m}_i$. Similarly,
\begin{align*}
\babs{\nu(m(\wt{m}_i(z)))-\nu(m(\wt{m}_i(w)))} & \leq C_\nu
\babs{m(\wt{m}_i(z)) - m(\wt{m}_i(w))} \leq C_\nu
\frac{1}{(1-r_R)^2} \abs{z-w},
\end{align*}
which is independent of $m,\wt{m}_i$. Combining these with eq.
(\ref{e:approx_quad_eq1}-\ref{e:approx_quad_eq2}) we get
$$\babs{\int_{\Omega_{0,R}}f(z)d\vol_H(z)-\min_{m(z_i)=w_j}\sum_k \alpha_k
f(p_k)} \leq
\parr{C_\mu+C_\nu}\frac{\int_{\Omega_{0,R}}d\vol_H}{(1-r_R)^2}\varphi_E \parr{\set{p_k}},$$
which finishes the proof.
\end{proof}

Finally, we prove: 

{\bf Theorem \ref{thm:final_approx_to_d^R_mu,nu}}
{\em For Lipschitz continuous $\mu,\nu$,
\[
\abs{\,d^R_{\mu,\nu}(z_i,w_j)- \hh{d}^R_{\mu,\nu}(z_i,w_j)} \leq
C_1\,\varphi_E\left( \set{p_k} \right) + C_2 \,L^{-1}~,
\]
where the constants $C_1,C_2$ depend only on $\mu,\nu,R$. }
\begin{proof}
In view of Theorem \ref{thm:convergence_of_numerical_quadrature} it
is sufficient to prove that
$$\abs{\wh{d}^R_{\mu,\nu}(z_i,w_j)-\hh{d}^R_{\mu,\nu}(z_i,w_j)}\leq C \, L^{-1},$$
for an appropriate constant $C$ depending only upon $\mu,\nu,R$. Denote
by $m_{i,j}$ the minimizer of $\wh{d}^R_{\mu,\nu}(z_i,w_j)$. Then
\begin{align*}
\abs{\wh{d}^r_{\mu,\nu}(z_i,w_j) - \hh{d}^r_{\mu,\nu}(z_i,w_j)} & =
\mathop{\min}_{\ell=1..L}\abs{ \sum_k \alpha_k \bigbrac{
\abs{\mu(\wt{m}_i(p_k)) - \nu(m_{i,j}(\wt{m}_i(p_k))) } -
\abs{\mu(\wt{m}_i(p_k)) - \nu(m_{z_i,w_j,2\pi \ell/
L}(\wt{m}_i(p_k))) }} }\\
& \leq \mathop{\min}_{\ell=1..L} \sum_k \alpha_k \Big |
\nu(m_{i,j}(\wt{m}_i(p_k))) - \nu(m_{z_i,w_j,2\pi \ell/
L}(\wt{m}_i(p_k)))\Big | \\
& \leq C_\nu \mathop{\min}_{\ell=1..L} \sum_k \alpha_k \Big |
m_{i,j}(\wt{m}_i(p_k)) - m_{z_i,w_j,2\pi \ell/ L}(\wt{m}_i(p_k))\Big
| \\
& \leq \frac{C_\nu}{(1-r_R)^2} \mathop{\min}_{\ell=1..L} \sum_k
\alpha_k \Big | \wt{m}_j^{-1}(m_{i,j}(\wt{m}_i(p_k))) -
\wt{m}_j^{-1}(m_{z_i,w_j,2\pi \ell/ L}(\wt{m}_i(p_k)))\Big |
\end{align*}
where, as in previous theorem, we denote by $C_\nu$ the Lipschitz
constant of $\nu$ in $\D$, and in the last inequality we have used
Lemma \ref{lem:mobius_is_lipschitz} while taking $\wt{m}_j^{-1}$ as
defined in Section \ref{s:the_discrete_case_implementation}. From
eq.(\ref{e:def_m_zi_wi_2pi l_div_L}) we have that
\begin{align*}
\abs{\wh{d}^r_{\mu,\nu}(z_i,w_j) - \hh{d}^r_{\mu,\nu}(z_i,w_j)} &
\leq \frac{C_\nu}{(1-r_R)^2} \mathop{\min}_{\ell=1..L} \sum_k
\alpha_k \Big | \wt{m}_j^{-1}(m_{i,j}(\wt{m}_i(p_k))) - e^{\bfi 2\pi
\ell/ L} p_k\Big |.
\end{align*}
Now note that $\wt{m}_j^{-1}\circ m_{i,j} \circ \wt{m}_i \in \Md$ also
fixes the origin; it follows that  $\wt{m}_j^{-1}\circ m_{i,j} \circ
\wt{m}_i(z) = e^{\bfi \theta} z$ for some
$\theta\in[0,2\pi)$. We therefore have
\begin{align*}
\abs{\wh{d}^r_{\mu,\nu}(z_i,w_j) - \hh{d}^r_{\mu,\nu}(z_i,w_j)} &
\leq \frac{C_\nu}{(1-r_R)^2} \mathop{\min}_{\ell=1..L} \sum_k
\alpha_k \Big |e^{\bfi \theta}  p_k - e^{\bfi 2\pi \ell/ L} p_k\Big
| \\
& \leq \frac{r_R \, C_\nu}{(1-r_R)^2} \mathop{\min}_{\ell=1..L}
\sum_k \alpha_k  \Big |e^{\bfi \theta}  - e^{\bfi 2\pi \ell/ L} \Big
| \\
& \leq \frac{r_R \, C_\nu}{(1-r_R)^2} \parr{\sum_k \alpha_k}
\mathop{\min}_{\ell=1..L}
 \Big |e^{\bfi \theta}  - e^{\bfi 2\pi \ell/ L} \Big
| \\ & \leq \frac{r_R \, C_\nu \, 2\pi}{L (1-r_R)^2}
\int_{\Omega_{0,R}}d\vol_H.
\end{align*}

\end{proof}

\renewcommand{\thesection}{ \Alph{section}}
\section{}
\label{a:appendix B}
\renewcommand{\thesection}{\Alph{section}}
In this appendix we review a few basic notions such as the
representation of (approximations to) surfaces by faceted, piecewise
flat approximations, called {\em meshes}, and discrete conformal
mappings; the conventions we describe here are the same as adopted
in \cite{Lipman:2009:MVF}.

We denote
a triangular mesh by the triple $M = (\V, \E, \F)$, where
$\V=\{v_i\}_{i=1}^m \subset \Real^3$ is the set of vertices,
$\E=\{e_{i,j}\}$ the set of edges, and $\F=\{f_{i,j,k}\}$  the set
of faces (oriented $i\rightarrow j \rightarrow k$).
\nomenclature{$M=(V,E,F) \ (N)$}{Mesh approximating surface $\M$
($N$) with vertices $V$, edges $E$, and faces $F$.} When dealing
with a second surface, we shall denote its mesh by $N$. In this appendix, we assume
our mesh is homeomorphic to a disk.

Next, we introduce ``conformal mappings'' of a mesh to the unit
disk. Natural candidates for discrete conformal mappings are not
immediately obvious. In particular, it is not possible
to use a continuous
piecewise-affine map the restriction of which to each triangle would be
a (positively oriented) similarity transformation: continuity would force 
the similarity transformations of any two adjacent
triangles to coincide, meaning such
a map would be globally a similarity. A different approach uses the
notion of discrete harmonic and discrete conjugate harmonic
functions due to Pinkall and Polthier \cite{Pinkall93,Polthier05} to
define a discrete conformal mapping on the mid-edge mesh. The
mid-edge mesh $\mM = (\mV, \mE, \mF)$ of a given mesh $M=(\V, \E,
\F)$ is defined as follows. \nomenclature{$\mM = (\mV, \mE, \mF) \
(\mN)$}{The mid-edge mesh.}  For the vertices $\mv_r \in \mV$, we
pick the mid-points of the edges of the mesh $M$; we call these the
mid-edge points of $M$. There is thus a $\mv_r \in \mV$
corresponding to each edge $e_{i,j} \in \E$. If $\mv_s$ and $\mv_r$
are the mid-points of edges in $\E$ that share a vertex in $M$, then
there is an edge $\me_{s,r} \in \mE$ that connects them. It follows
that for each face $f_{i,j,k} \in \F$ we can define a corresponding
face $\mf_{r,s,t} \in \mF$, the vertices of which are the mid-edge
points of (the edges of) $f_{i,j,k}$; this face has the same
orientation as $f_{i,j,k}$. Note that the mid-edge mesh is not a
manifold mesh, as illustrated by the mid-edge mesh in Figure
\ref{f:discrete_type_1}, shown together with its ``parent'' mesh: in
$\mM$ each edge ``belongs'' to only one face $\mF$, as opposed to a
manifold mesh, in which most edges (the edges on the boundary are
exceptions) function as a hinge between two faces. This ``lace''
structure makes a mid-edge mesh more flexible: it turns out that it
is possible to define a piecewise linear map that makes each face in
$\mF$ undergo a pure scaling (i.e. all its edges are shrunk or
extended by the same factor) and that simultaneously flattens the
whole mid-edge mesh (we provide more details on this flattening
below). By extending this back to the original mesh, we thus obtain
a map from each triangular face to a similar triangle in the plane;
these individual similarities can be ``knitted together'' through
the mid-edge points, which continue to coincide (unlike most of the
vertices of the original triangles).

We have thus relaxed the problem, and we define a map via a
similarity on each triangle, with continuity for the complete map at
only \emph{one} point of each edge, namely the mid point. This
procedure was also used in \cite{Lipman:2009:MVF}; for additional
implementation details we refer the interested reader (or
programmer) to that paper, which includes a pseudo-code.

\begin{figure}[ht]
\centering \setlength{\tabcolsep}{0.4cm}
\begin{tabular}{@{\hspace{0.0cm}}c@{\hspace{0.2cm}}c@{\hspace{0.0cm}}}
\includegraphics[width=0.4\columnwidth]{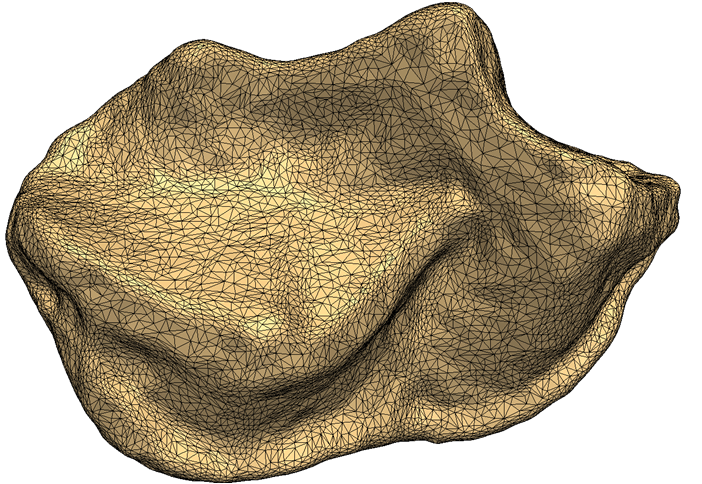}
&
\includegraphics[width=0.4\columnwidth]{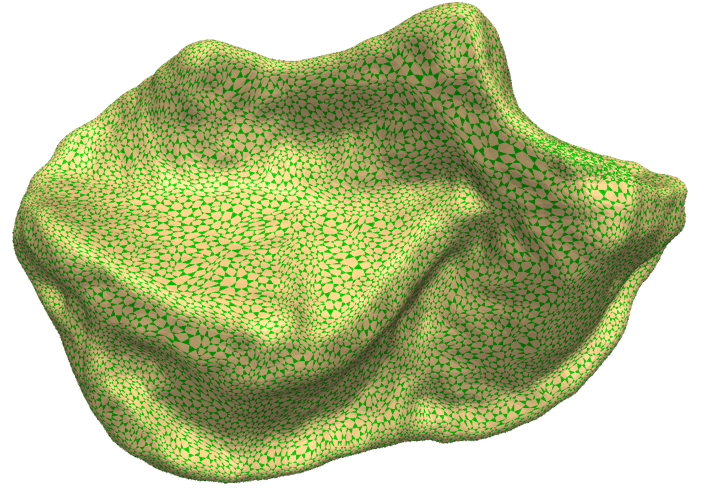} \\
Discrete mesh& Mid-edge mesh\\
\includegraphics[width=0.4\columnwidth]{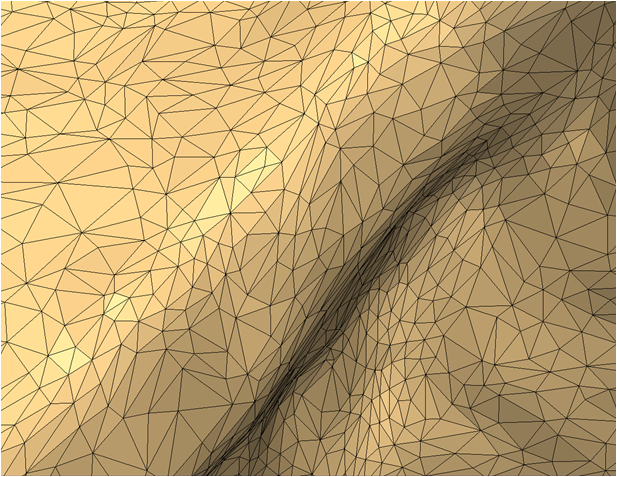}
&
\includegraphics[width=0.4\columnwidth]{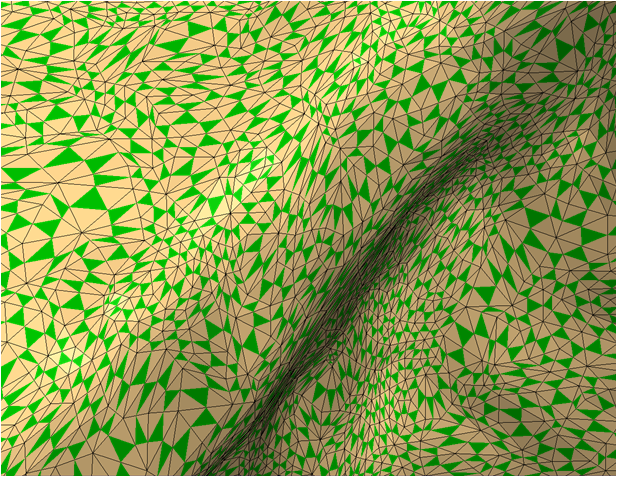} \\
Surface mesh zoom-in & Mid-edge mesh zoom-in
\end{tabular}
\caption{A mammalian tooth surface mesh, with the corresponding
mid-edge mesh. In the mid-edge mesh, the faces are the smaller green
triangles within the faces of the surface
mesh.}\label{f:discrete_type_1}
\end{figure}

This flattening procedure
maps the boundary of the mesh onto a region with a straight
horizontal slit (see Figure \ref{f:discrete_type_2}, where the
boundary points are marked in red) \cite{Lipman:2009:MVF}. 
We can assume, without loss of
generality, that this slit coincides with the interval $[-2, 2]
\subset \C$. Now applying the inverse of the holomorphic map $z=w+\frac{1}{w}$
maps $\mathbb{C}\setminus [-2,2]$ conformally to the disk $\D$,  
with the slit at $[-2,2]$
mapped to the boundary of the disk. It follows that when
this map is applied to our flattened mid-edge mesh, its
image is a mid-edge mesh in the unit disk, with the
boundary of the disk corresponding to the boundary of our
(disk-like) surface. (See Figure \ref{f:discrete_type_2}.) We shall
denote by $\Phi:\mV \rightarrow \C$ the composition of these
different conformal and discrete-conformal maps, from the original
mid-edge mesh to the corresponding mid-edge mesh in the unit disk.
\nomenclature{$\Phi$}{The discrete uniformization map taking
vertices of the mid-edge mesh $\mV \too \D$.}

\begin{figure}[ht]
\centering \setlength{\tabcolsep}{0.4cm}
\begin{tabular}{@{\hspace{0.0cm}}c@{\hspace{0.0cm}}c@{\hspace{0.0cm}}c@{\hspace{0.0cm}}}
\includegraphics[width=0.2\columnwidth]{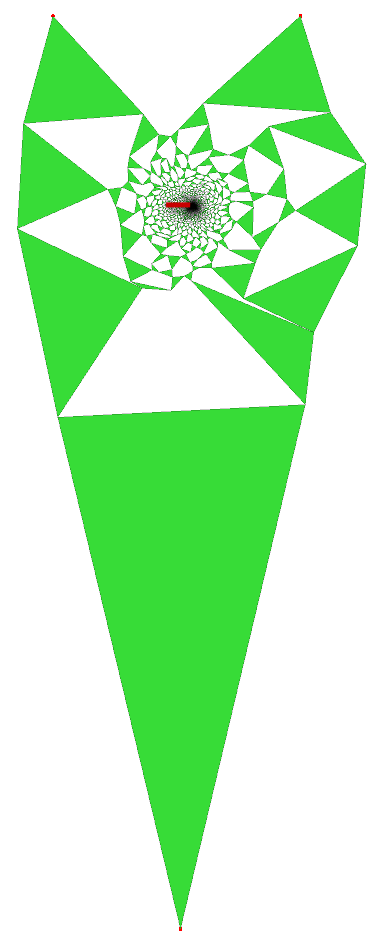} &
\includegraphics[width=0.4\columnwidth]{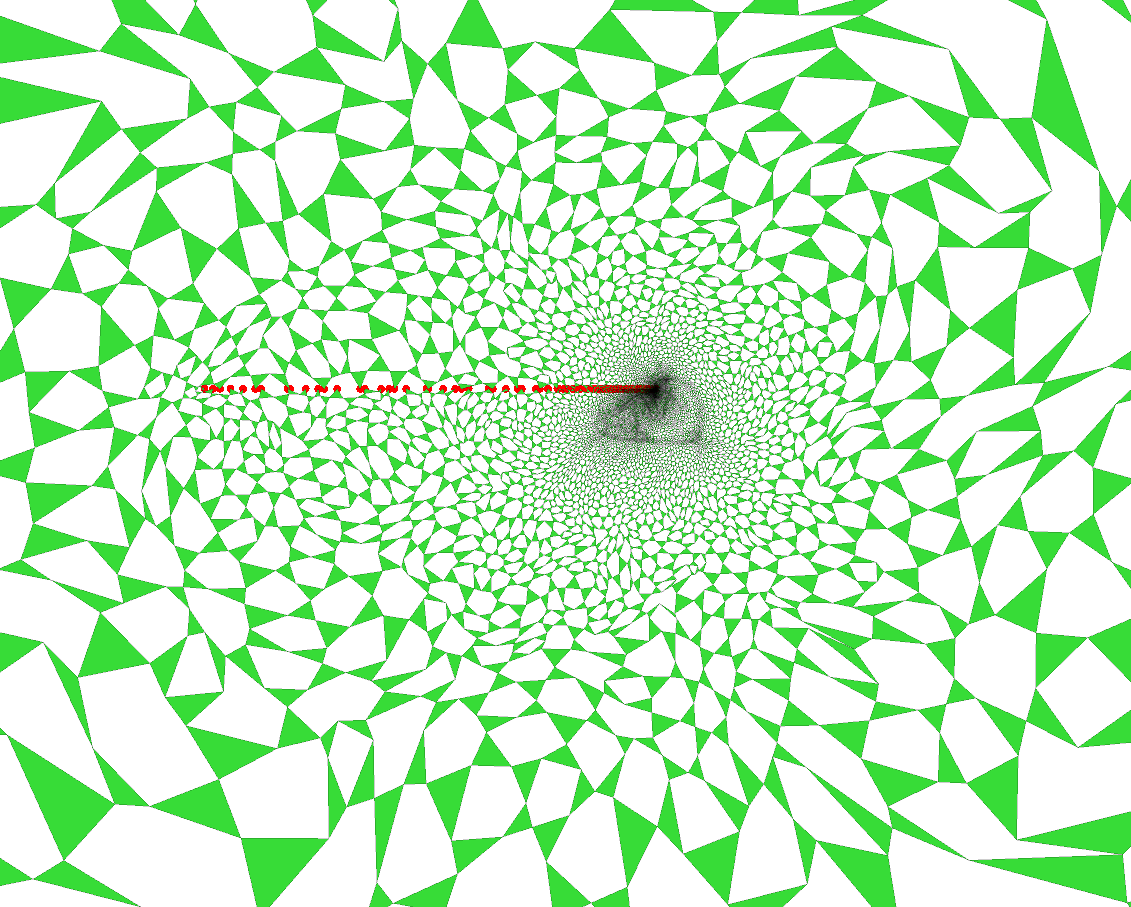}\\
Mid-edge uniformization & Uniformization Zoom-in \\
\includegraphics[width=0.4\columnwidth]{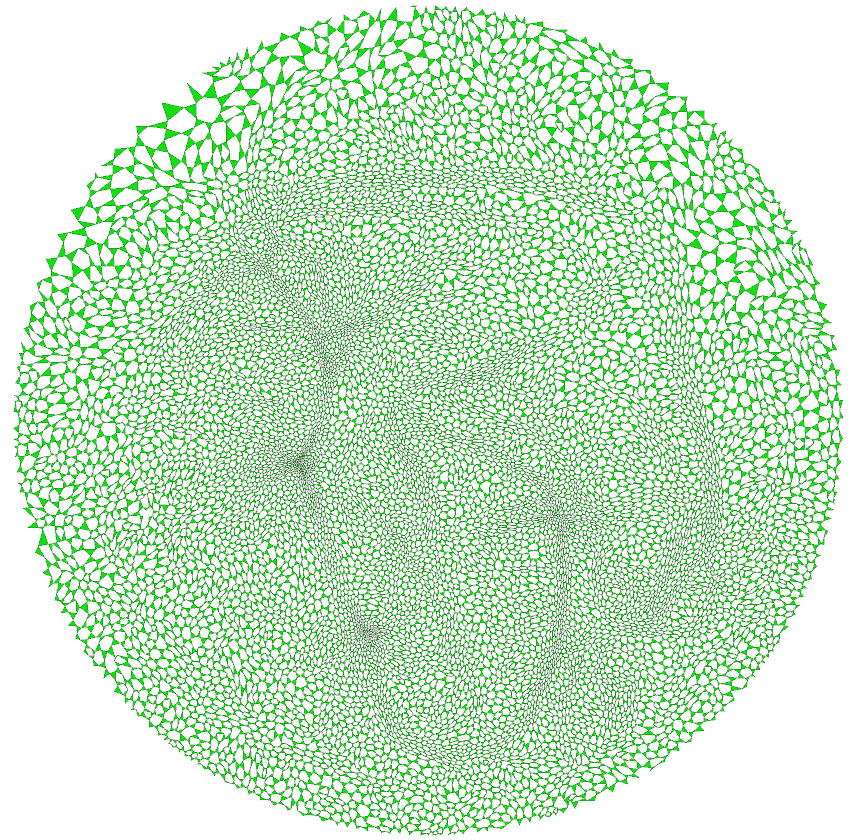}& \includegraphics[width=0.5\columnwidth]{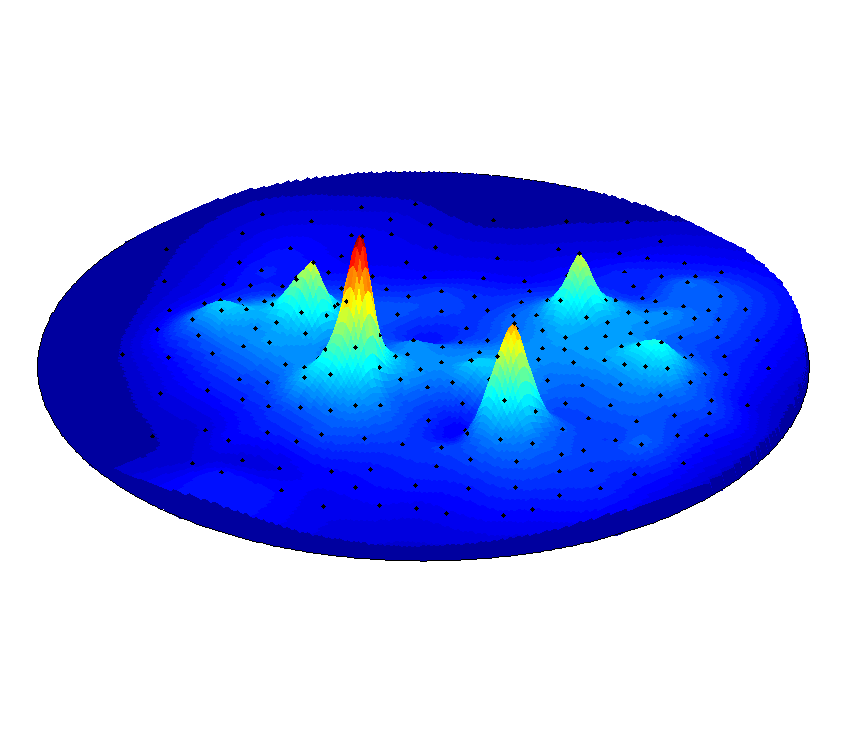}\\
After mapping to the disk & Interpolated conformal factor
\end{tabular}\\
\caption{The discrete conformal transform to the unit disk for the
surface of Figure \ref{f:discrete_type_1}, and the interpolation of
the corresponding discrete conformal factors (plotted with the JET
color map in Matlab). The red points in the top row's images show
the boundary points of the surface.} \label{f:discrete_type_2}
\end{figure}

Next, we define the Euclidean discrete conformal factors, defined as
the density, w.r.t. the Euclidean metric, of the mid-edge triangles
(faces), i.e.
\[
\mu^E_{\mf_{r,s,t}} =
\frac{\Vol_{\mathds{R}^3}(\mf_{r,s,t})}{\Vol(\Phi(\mf_{r,s,t}))}.
\]
Note that according to this definition, we have
\[
\int_{\Phi(\mf_{r,s,t})}\,\mu^E_{\mf_{r,s,t}}\,d\vol_E\, =
\,\frac{\Vol_{\mathds{R}^3}(\mf_{r,s,t})}{\Vol_E\left(\Phi(\mf_{r,s,t})\right)}\,
\Vol_E\left(\Phi(\mf_{r,s,t})\right)\,=\,\Vol_{\mathds{R}^3}(\mf_{r,s,t}),
\]
where $\Vol_E$ denotes the standard Lebesgue (Euclidean) volume
element in $\D$, and $\vol_{\R^3}(\mf)$ stands for the area of $\mf$
as induced by the standard Euclidean volume element in $\R^3$. The
discrete Euclidean conformal factor at a mid-edge vertex $\mv_r$ is
then defined as the average of the conformal factors for the two
faces $\mf_{r,s,t}$ and $\mf_{r,s',t'}$ that touch in $\mv_r$, i.e.
\[
\mu^E_{\mv_r} \,=\,
\frac{1}{2}\,\left(\mu^E_{\mf_{r,s,t}}\,+\,\mu^E_{\mf_{r,s',t'}}\right).
\]
Figure \ref{f:discrete_type_2} illustrates the values of the
Euclidean conformal factor for the mammalian tooth surface of
earlier figures. The discrete hyperbolic conformal factors are
defined according to the following equation, consistent with the
convention adopted in section \ref{s:intro_and_background},
\begin{equation}\label{e:discrete_hyperbolic_conformal_density}
    \mu^H_{\mv_r} \,= \,\mu^E_{\mv_r} \,\left(1 - |\Phi(\mv_r)|^2\right)^2.
\end{equation}
As before, we shall often drop the superscript $H$: unless otherwise
stated, $\mu=\mu^H$, and $\nu=\nu^H$.

The (approximately) conformal mapping of the original mesh to the
disk is completed by constructing a smooth interpolant $\Gamma_\mu
:\D \rightarrow \R$, that interpolates the discrete conformal factor
so far defined only at the vertices in $\Phi(\mV)$; $\Gamma_\nu$ is
constructed in the same way. \nomenclature{$\Gamma_\mu \
(\Gamma_\nu)$}{The smooth interpolant of the discrete conformal
factors.} In practice we use Thin-Plate Splines, i.e. functions of
the type
\begin{equation}\label{e:Gamma_mu_tps}
    \Gamma_\mu(z) = p_1(z)\, +\,\sum_i\, b_i \,\psi(|z-z_i|)\,,
\end{equation}
where $\psi(r)=r^2\log(r^2)$, $p_1(z)$ is a linear polynomial in
$x^1,x^2$, and  $b_i \in \C$; $p_1$ and the $b_i$ are determined by
the data that need to be interpolated. Similarly
$\Gamma_\nu(w)\,=\,q_1(w)\,+\,\sum_j \,c_j\, \psi(|w-w_j|)$ for some
constants $c_j \in \C$ and a linear polynomial $q_1(w)$ in
$y^1,y^2$. We use as interpolation centers two point sets
$Z=\set{z_i}_{i=1}^n,$ and $W=\set{w_j}_{j=1}^p$ that are uniformly
distributed over the surfaces $\M$ and $\N$ (resp.), they are
(relatively small) subsets of the mid-edge mesh vertex sets. In
practice we calculate these (sub) sample sets by starting from an
initial random seed on the surface (which will itself not be
included in the set), and take the geodesic furthest point in $\mV$
(we approximate geodesic distances with Dijkstra's algorithm) from
the seed as the initial point of the sample set. One then keeps
repeating this procedure, selecting at each iteration the point that
lies at the furthest geodesic distance from the set of points
already selected. This algorithm is known as the Farthest Point
Algorithm (FPS) \cite{eldar97farthest}. An example of the output of
this algorithm, using geodesic distances on a disk-type surface, is
shown in Figure \ref{f:discrete_type_3}. Further discussion of
practical aspects of Voronoi sampling of a surface can be found in
\cite{BBK2007non_rigid_book}.

\begin{figure}[h]
\centering \setlength{\tabcolsep}{0.4cm} \hspace*{.5 in}
\begin{minipage}{3 in}
\includegraphics[width=0.8\columnwidth]{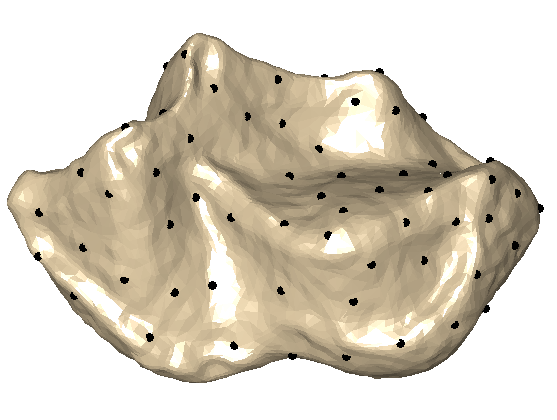}\hspace*{-.5 in}
\end{minipage}
\hspace*{-.5 in}
\begin{minipage}{3 in}
\caption{Sampling of the surface of  Figure \ref{f:discrete_type_1}
obtained by the Farthest Point  Algorithm.}
\label{f:discrete_type_3}
\end{minipage}
\end{figure}

Once the sample sets $Z$ and $W$ are determined we project them to
the uniformization space using $\Phi$. The bottom-right part of
Figure \ref{f:discrete_type_2} shows the result of the interpolation
based on the centers $Z,W$ (shown as black points).

To compute the explicit Thin-Plane-Splines (\ref{e:Gamma_mu_tps}),
we use a standard smoothing Thin-Plate Spline procedure:
$$\Gamma_\mu(z) = \mathop{\mathrm{argmin}}_{\gamma}
\set{ \lambda \sum_{r} \abs{\mu_{\mv_r} - \gamma(\Phi(\mv_r))}^2 +
(1-\lambda) \int_{\D} \parr{\frac{\partial^2 \gamma }{(\partial
x^1)^2}}^2 + \parr{\frac{\partial^2 \gamma }{\partial x^1
\partial x^2}}^2 +
\parr{\frac{\partial^2 \gamma }{(\partial x^2)^2}}^2  dx^1 \wedge
dx^2 },$$ where the minimization is over all $\gamma$ in the
appropriate Sobolev space and where we picked the values $0.95-0.99$
manually (it was fixed per dataset of surfaces) for the smoothing
factor $\lambda$ to avoid over-fitting the data. We noticed that
$\lambda$ does not have large effect on the results.

In our implementation we assumed we have a smooth representation of
the conformal factors $\mu(z)=\Gamma_\mu(z)$, $\nu(w)=\Gamma_\nu(w)$
and we simply use the notation $\mu,\nu$ for these approximations.


To conclude this whirlwind description of the algorithm and ideas we use
for discrete uniformization, we provide a short exposition on discrete and
conjugate discrete harmonic functions on triangular meshes as
in \cite{Dziuk88,Pinkall93,Polthier00,Polthier05}, and 
show how we use them to conformally
flatten disk-type ( or even just simply connected) triangular
meshes.

Discrete harmonic functions are defined using a variational
principle in the space of continuous piecewise linear functions
defined over the mesh $\PL_\M$ (\cite{Dziuk88}), as follows. Let us
denote by $\phi_i(z),$ $i=1,..,m,$ the scalar functions that satisfy
$\phi_j(v_i)=\delta_{i,j}$ and are affine on each triangle
$f_{i,j,k} \in \F$. Then, the (linear) space of continuous
piecewise-linear function on $M$ can be written in this basis:
$$\PL_M = \set{\sum_{i=1}^m u_i\phi_i(z) \ \mid \ (u_1,...,u_m)^T\in \Real^m }.$$
Next, the following quadratic form is defined over $\PL_M$:
\begin{equation}\label{e:discrete_dirichlet_energy}
    E_{Dir}(u) = \sum_{f\in \F} \int_{f} \ip{\nabla u,\nabla u} d\vol_{\Real^3},
\end{equation}
where $\ip{\cdot}=\ip{\cdot}_{\Real^3}$ denotes the
inner-product induced by the ambient Euclidean space, and
$d\vol_{\Real^3}$ is the induced volume element on $f$. This
quadratic functional, the {\em Dirichlet energy}, can be written
as follows:
\begin{equation}\label{e:dirichelet_energy_in_coordinates}
    E_{Dir}\left(\sum_i u_i \phi_i \right) = \sum_{i,j=1}^m u_i u_j \brac{\sum_{f\in\F} \int_{f} \ip{\nabla \phi_i, \nabla \phi_j}}d\vol_{\Real^3} =
    \sum_{i,j=1}^m u_i u_j \int_{M} \ip{\nabla \phi_i, \nabla \phi_j}d\vol_{\Real^3}.
\end{equation}

The discrete harmonic functions are then defined as the functions $u\in PL_M$
that are critical for $E_{Dir}(u)$,
subject to some constraints on the boundary of $M$. The
linear equations for discrete harmonic function $u \in \PL_M$ are
derived by partial derivatives of $E_{Dir}$,
(\ref{e:dirichelet_energy_in_coordinates}) w.r.t. $u_i, i=1,..,m$:
\begin{equation}\label{e:partials_of_dirichlet}
    \frac{\partial E_{Dir}(u)}{\partial u_k} =
2\sum_{i=1}^m u_i \brac{\sum_{f\in\F} \int_{f} \ip{\nabla \phi_i,
\nabla \phi_k}}d\vol_{\Real^3} = 2 \int_M \ip{\nabla u , \nabla
\phi_k}d\vol_{\Real^3} = 2 \int_{R_k} \ip{\nabla u , \nabla
\phi_k}d\vol_{\Real^3},
\end{equation}
where $R_k \subset M$ is the 1-ring neighborhood of vertex $v_k$.
The last equality uses that $\phi_k$ is supported on
$R_k$.

Now, let $u=\sum_i u_i \phi_i$ be a discrete harmonic function.
Pinkall and Polthier observed that conjugating the
piecewise-constant gradient field $\nabla u$ (constant on each
triangle $f \in \F$), i.e. rotating the gradient $\nabla u$ in each
triangle $f$ by $\pi/2$ in the positive ( = counterclockwise) sense (we assume $M$ is
orientable),
results in a new vector field $*du = J du$ with the special property
that its integrals
along (closed) paths that cross edges only at their mid-points are
systematically zero
(see for
example \cite{Polthier05}). This means in particular that we can define a
piecewise linear function $\stu$ such that its gradient satisfies $d\stu
= *du$ and that is furthermore continuous through the mid-edges
$\mv \in \mV$. The space of piecewise-linear functions on meshes
that are continuous through the mid-edges is well-known in the
finite-element literature, where it is called $\ncPL_M$,
the space of non-conforming
finite elements \cite{Brenner:2008:MTF}. The Dirichlet
form (\ref{e:discrete_dirichlet_energy}) is defined over the
space of non-conforming elements $\ncPL_M$ as well; the non-conforming
discrete harmonic functions are defined
to be the functions $v\in\ncPL_M$ that are critical for $E_{Dir}$ and that
satisfy some
constraints on the mid-edges of the boundary of the mesh.
Polthier \cite{Polthier05} shows
that if $u\in \PL_M$ is a discrete harmonic function, then $\stu \in \ncPL_M$ is
also discrete harmonic, with the same Dirichlet energy, and
vice-versa. Solving for the discrete harmonic function after fixing
values at the boundaries amounts to solving a sparse linear system
which is explicitly given in \cite{Polthier05}.

This theory can be used to define discrete conformal mappings, and
used to flatten a mesh in a ``discrete conformal'' manner, as follows. The
flattening is done by constructing a pair of conjugate piecewise
linear functions $(u,\stu)$ where $u\in \PL_M$, $\stu \in \ncPL_M$,
and the flattening map $\Phi:\mM \rightarrow \C$ is given by
\begin{equation}\label{e:Phi_flattening_map}
    \Phi = u+\bfi \stu.
\end{equation}
Since $d\stu = J du$, $\Phi$ is a similarity transformation on each
triangle $f\in \F$. Furthermore, $\Phi$ is continuous through the
mid-edge points $\mv_r\in\mV$; $\Phi$ is thus well-defined on
the points in $\mV$ and maps them to the complex plane.

The function $u$ is defined by choosing an arbitrary triangle
$f_{out} \in F$, excising it from the mesh, setting the
values of $u$ at two of $f_{out}$'s vertices $u_{i_1},u_{i_2}$ to $0$ and $1$,
respectively,  and
then
solving for the discrete harmonic $u$ that satisfies these
constraints. See for example Figure \ref{f:discrete_type_2}
(top-left); the ``missing mid-edge face'' corresponding to the
excised face $f_{out}$ would have connected the three mid-edge vertices
that have a only one mid-edge face touching them. The conjugate
function $\stu$ is constructed by a simple conjugation (and
integration) process as described in \cite{Polthier05} and
\cite{Lipman:2009:MVF}.

A surprising property of the Discrete Uniformization $\Phi$ as it is
defined above, which nicely imitates the continuous theory (see
\cite{Springer57}) is that it takes the boundaries of $\M$ to
horizontal slits, see Figure \ref{f:discrete_type_2}, top row
(boundary vertices colored in red). This property allows us to
easily construct a closed form analytic map (with ``analytic''
in its standard complex analytic sense)
that will further bijectively map the entire complex plane $\C$
minus the slit to the open unit disk, completing our
Uniformization procedure.

The proof of this property is similar to that for
Proposition 35 in \cite{Polthier05}; see also
\cite{Lipman:2009:MVF}. More precisely:
\begin{thm} \label{t:midedge_boundary}
Let $\Phi:\mM \rightarrow \C$ be the flattening
map from the mid-edge mesh $\mM$ of a mesh $\M$ with boundary,
using a discrete harmonic and conjugate harmonic pair as described above.
Then, for each connected component of the boundary of $\M$,  the mid-edge
vertices of boundary edges
are all mapped onto one
line segment parallel to the real axis.
\end{thm}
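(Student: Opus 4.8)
The plan is to reduce the statement to showing that $\stu$ — the imaginary part of $\Phi=u+\bfi\,\stu$ — is constant when restricted to the mid-edge vertices lying on a given connected component of the boundary of $\M$. Once that is established the theorem follows quickly: $\Phi$ is continuous through the mid-edge points $\mv\in\mV$, so as one traces such a boundary component edge by edge the images of its mid-edge vertices all lie on one horizontal line $\{\operatorname{Im}=\text{const}\}$, and connectedness of the component forces the image to be a single segment parallel to the real axis. So the bulk of the work is the constancy claim.

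The engine of the argument is a discrete Stokes identity tying $u$ to its conjugate. For a positively oriented face $f$ with vertices $v_k,v_a,v_b$ (in this cyclic order), the hat function $\phi_k$ is affine and $\nabla\phi_k$ is constant on $f$, and using $d\stu=J\,du$ a direct computation gives, with the orientation conventions fixed above,
\[
\int_f \ip{\nabla u,\nabla\phi_k}\, d\vol_{\Real^3}\;=\;\tfrac12\left(\stu(v_a)-\stu(v_b)\right),
\]
the values being those of the affine restriction $\stu|_f$. Next I would use that the construction (excise $f_{out}$, pin $u$ at two of its vertices, take a critical point of $E_{Dir}$) makes every vertex of the remaining mesh other than the two pinned ones a free critical vertex; in particular every boundary vertex of $\M$ satisfies the full discrete harmonic equation. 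By (\ref{e:partials_of_dirichlet}) this equation at such a vertex $v_k$ reads $\sum_{f\ni v_k}\int_f \ip{\nabla u,\nabla\phi_k}\,d\vol_{\Real^3}=0$, i.e. $\sum_{f\ni v_k}\left(\stu|_f(v_a^f)-\stu|_f(v_b^f)\right)=0$. For an interior $v_k$ the incident faces form a closed fan, and this identity together with continuity of $\stu$ at all mid-edges incident to $v_k$ is precisely the vanishing of the period of $\stu$ around $v_k$ — which is exactly why $\stu$ is well defined in $\ncPL_\M$; I would use the interior case as a template for the boundary case.

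For a boundary vertex $v_k$ the incident faces $f_1,\dots,f_r$ form an open fan: the two extreme edges are boundary edges of $\M$ whose midpoints are the two boundary mid-edge vertices adjacent to $v_k$, while each of the $r-1$ interior edges is shared by consecutive faces $f_i,f_{i+1}$ and hence contributes a continuity relation (since $\stu|_{f_i},\stu|_{f_{i+1}}$ are affine, the average of the endpoint $\stu$-values over the shared edge agrees on both sides). Substituting these $r-1$ relations into the harmonic identity above and telescoping — a short, routine calculation — yields that $\stu$ takes the same value at the two boundary mid-edge vertices neighbouring $v_k$. Walking along a connected boundary component then shows $\stu$ is constant on all of its mid-edge vertices, completing the reduction. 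This mirrors the proof of Proposition 35 in \cite{Polthier05} (see also \cite{Lipman:2009:MVF}).

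I expect the only real obstacle to be bookkeeping rather than anything conceptual: fixing the signs in the discrete Stokes identity consistently with the chosen orientation and sense of conjugation, carefully distinguishing the interior edges of each boundary fan (which supply continuity relations) from the two boundary edges (whose midpoints one wants to compare), and checking the mild technical point that excising $f_{out}$ really does leave every vertex of the boundary of $\M$ free, so that the discrete harmonic equation is indeed available there.
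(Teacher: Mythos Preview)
Your proposal is correct and follows essentially the same approach as the paper: reduce to showing $\stu$ takes the same value at the two boundary mid-edge vertices adjacent to any boundary vertex $v_k$, then derive this from the discrete harmonic equation at $v_k$ via a per-face identity relating $\int_f\ip{\nabla u,\nabla\phi_k}$ to increments of $\stu$. The paper's presentation is slightly more direct---it uses the geometric formula $\nabla\phi_j=\frac{J(v_i-v_k)}{2\,\vol_{\R^3}(f_{i,j,k})}$ to recognise the Euler--Lagrange sum immediately as the line integral $\int_\gamma d\stu$ along the mid-edge path through the fan, so the telescoping is absorbed into the path integral and no separate invocation of the interior continuity relations is needed---but the content is the same.
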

\begin{proof}
Suppose $u=\sum_i u_i \phi_i(\cdot)$ is a discrete harmonic, piecewise linear and
continuous function, defined at each vertex $v_i \in\V$,  excluding the two
vertices of the excised triangle for which
values are prescribed; then we have, by
(\ref{e:partials_of_dirichlet}),
\begin{equation}\label{e:euler-lagrange_discrete_harmonic}
    \int_{R_i}\langle \nabla \phi_i, \nabla u\rangle d\vol_{\R^3} = 0,
\end{equation}
Next, consider a boundary vertex $v_j$ of the mesh $\M$. Denote by
$\mv_r,\mv_s$ the mid-edge vertices on the two boundary edges
touching vertex $v_j$. We will show that $\stu(\mv_r)=\stu(\mv_s)$;
this will imply the theorem, since $\stu$ gives the
imaginary coordinate for the images  of
the mid-edge vertices under the flattening map (see (\ref{e:Phi_flattening_map})) .
Observe that on the triangle $f_{i,j,k}$,
\begin{equation}\label{e:grad_phi_j}
    \nabla \phi_j = \frac{J(v_i-v_k)}{2\,\vol_{\Real^3}(f_{i,j,k})}.
\end{equation}
Recalling that $\nabla \stu= J \nabla u$, using (\ref{e:grad_phi_j}),
and $J^T=-J$, we obtain
\begin{align*}
\stu(\mv_r)-*u(\mv_s) & =\int_\gamma d \stu = \int_\gamma *du = \sum_{f_{i,j,k} \ni v_j} \ip{ J \nabla  u\mid_{f_{i,j,k}} , \frac{1}{2}(v_i-v_k)} \\
& = \sum_{f_{i,j,k}\ni v_j} \ip{  \nabla  u\mid_{f_{i,j,k}} , \frac{1}{2}J^T (v_i-v_k) } 
 =  \sum_{f_{i,j,k}\ni v_j} \ip{  \nabla  u\mid_{f_{i,j,k}} , - \nabla \phi_j \mid_f } \vol_{\Real^3}(f) \\
& = - \int_M \ip{ \nabla u , \nabla \phi_j } d\vol_{\Real^3} 
 = 0,
\end{align*}
where $\gamma$ is the piecewise linear path starting at $\mv_r$ and
passing through the mid-edge vertices of the 1-ring neighborhood of
$v_j$ ending at $\mv_s$. The last equality is due to
(\ref{e:partials_of_dirichlet}).
\end{proof}

A natural question, when dealing with any type of finite-element
approximation, concerns convergence as the mesh is refined:
convergence in what sense, and at what rate? For discrete harmonic
functions over meshes, this convergence is discussed in
\cite{Hildebrandt06,Polthier00}. These convergence results
are in the weak sense; this motivated our defining the discrete
conformal factors $\mu_{\mf}$ via integrated quantities (volumes) in
Section \ref{s:the_discrete_case_implementation}.

Finally, we note that the method presented here for Discrete
Uniformization is just one option among several; other
authors have suggested other techniques; for example \cite{Gu03}.
Typically, this
part of the complete algorithm described in this paper
could be viewed as a ``black box'': the remainder of the algorithm
would not change if one method of  Discrete
Uniformization is replaced by another.

\renewcommand{\thesection}{ \Alph{section}}
\section{}
\label{a:appendix C}
\renewcommand{\thesection}{\Alph{section}}
In this Appendix we prove a lemma used in the proof of Theorem
\ref{t:relaxation}.

{\bf Lemma} {\em The $N \times N$ matrices $\pi$ satisfying
\begin{equation}
     \left \{ \begin{array}{c}
             \sum_i \pi_{ij} \leq 1 \\
             \sum_j \pi_{ij} \leq 1 \\
             \pi_{ij} \geq 0 \\
             \sum_{i,j} \pi_{ij} = M < N
           \end{array}
   \right .
\label{appc:constraints}
\end{equation}
constitute a convex polytope $\P$ of which the  extremal points are
exactly those $\pi$ that satisfy all these constraints, and that
have all entries equal to either 0 or 1. }

{\em Remark}. Note that the matrices $\pi \in \P$ with all entries
in $\{0,1\}$ have exactly $M$ entries equal to 1, and all other
entries equal to zero; if one removes from these matrices all rows
and columns that consist of only zeros, what remains is a $M \times
M$ permutation matrix.

\begin{proof}
$\P$ can be considered as a subset of $\R^{N^2}$, with all entries
nonnegative, summing to $M$. The two inequalities in
(\ref{appc:constraints}) imply that the entries of any $\pi \in \P$
are bounded by 1. These inequalities can also be rewritten as the
constraint that every entry of $A \P - b \in \R^{2N}$ is non
positive, where $A$ is a $\R^{2N}\times \R^{N^2}$ matrix, and $b$ is
a  vector in $\R^{2N}$. It follows that $\P$ is a (bounded) convex
polytope in $\R^{N^2}$.

If $\pi \in \P\subset \R^{N^2}$ has entries equal to only 0 or 1,
then $\pi$ must be an extremal point of $\P$ by the following
argument. If $\pi_{\ell}=1$, and $\pi$ is a nontrivial convex
combination of $\pi^1$ and $\pi^2$ in $\P$, then
\[
\pi=\lambda\,\pi^1\,+\,(1-\lambda)\,\pi^2 \,\mbox{ with }\lambda \in
(0,1)\, \Longrightarrow 1=
\lambda\,\pi^1_{\ell}\,+\,(1-\lambda)\,\pi^2_{\ell} \,\mbox{ with }
\pi^1_{\ell}\,,\,\pi^2_{\ell}\geq 0 \Longrightarrow
\pi^1_{\ell}=\pi^2_{\ell}=1~.
\]
A similar argument can be applied for the entries of $\pi$ that are
0. It follows that we must have $\pi^1=\pi=\pi^2$, proving that
$\pi$ is extremal.

It remains thus to prove only that $\P$ has no other extremal
points. To achieve this, it suffices to prove that the extremal
points of $\P$ are all integer vectors, i.e. vectors all entries of
which are integers -- once this is established, the Lemma is proved,
since the only integer vectors in $\P$ are those with all entries in
$\{0,1\}$.

To prove that the extremal points of $\P$ are all integer vectors,
we invoke the Hoffman-Kruskal theorem (see \cite{Lovasz86}, Theorem
7C.1), which states that, given a $L\times K$ matrix $\MM$, with all
entries in $\{-1,0,1\}$, and a vector $b \in \R^L$ with integer
entries, the vertices of the polytope defined by $\{ x \in \R^K \,;
\, (\MM x)_\ell \le b_\ell \,\mbox{ for }\, \ell=1,\ldots,L\}$ are
all integer vectors in  $\R^K$ if and only if the matrix $\MM$ is
totally unimodular, i.e. if and only if  every square submatrix of
$\MM$ has determinant 1, 0 or $-1$.

We first note that (\ref{appc:constraints}) can indeed be written in
this special form. The equality $\sum_{i,j} \pi_{ij} = M$ can be
recast as the two inequalities $\sum_{i,j} \pi_{ij} \le M$ and
$-\,\sum_{i,j} \pi_{ij} \le - M$. The full system
(\ref{appc:constraints}) can then be written as $(\MM \pi)_\ell \le
b_\ell $ for $\ell=1,\ldots,L$, where $\MM$ is a $(2N+2+N^2)\times
N^2$ matrix constructed as follows. Its first $2N$ rows correspond
to the constraints on the sums over rows and columns; the entries of
the next row are all 1, and of the row after that, all $-1$ -- these
two rows correspond to the constraint $\sum_{i,j} \pi_{ij} = M$; the
final $N^2 \times N^2$ block is diagonal, with all its diagonal
entries equal to $-1$. The first $2N$ entries of $b$ are 1; the next
2 entries are $M$ and $-M$; its final $N^2$ entries are 0. By the
Hoffman-Kruskal theorem it suffices thus to show that $\MM$ is
totally unimodular.

Because the last $N^2$ rows,  the {\em bottom rows} of $\MM$, have
only one non-zero entry, which equals $-1$, we can disregard them.
Indeed, if we take a square submatrix of $\MM$ that includes (part
of) one of these bottom rows, then the determinant of the submatrix
is 0 if only zero entries of the bottom row ended up in the
submatrix; if the one -1 entry of the bottom row is an entry in the
submatrix, then the determinant is, possibly up to a sign change,
the same as if that row and the column of the $-1$ entry are
removed. By this argument, we can remove all the rows of the
submatrix partaking of the bottom rows of $\MM$.

We thus have to check unimodularity only for $\MM'$, the submatrix
of $\MM$ given by its first $2N+2$ rows. If any submatrix contains
(parts of) both the $(2N+1)$st and the $(2N+2)$nd row, then the
determinant is automatically zero, since the second of these two
rows equals the first one, multiplied by -1. This reduces the
problem to checking that $\MM''$, the submatrix of $\MM$ given by
its first $2N+1$ rows, is totally unimodular.

We now examine the top $2N$ rows of $\MM''$ more closely. A little
scrutiny reveals that it is, in fact,  the adjacency matrix $\GG$ of
the complete bipartite graph with $N$ vertices in each
part.\footnote{ The adjacency matrix $A$ for a graph $\mathcal{G}$
has as many columns as $\mathcal{G}$ has edges, and as many rows as
$\mathcal{G}$ has vertices; if we label the rows and columns of $A$
accordingly, then $A_{ve}=1$ if the vertex $v$ is an end point of
the edge $e$; otherwise $A_{ve}=0$. An adjacency matrix thus has
exactly two nonzero entries (both equal to 1) in each column. The
number of nonzero entries in the row with index $v$ is the degree of
$v$ in the graph. } It is well-known (see e.g. Theorem 8.3 in
\cite{Schrijver08}) that this adjacency matrix is  totally
unimodular, so any square submatrix of $\MM''$ that does not involve
the $(2N+1)$st row of $\MM''$ is already known to have determinant
0, 1 or $-1$. We thus have to check only submatrices that involve
the last row, i.e. matrices that consist of a $(n-1)\times n$
submatrix of $\GG$, with an added $n$th row with all entries equal
to 1. We'll denote such submatrices by $\GG'$.

We can then use a simple induction argument on $n$ to finish the proof. 
The case $n=2$ is trivial.
In proving the induction step for $n=m$,
we can assume that each of the top $m-1$ rows
of our $m \times m$ submatrix $\GG'$ 
contains at least two entries 
equal to 1, since
otherwise the determinant of $\GG'$ would automatically 
be 0, 1 or -1 by induction.

The 
first $m-1$ rows of $\GG'$ correspond to vertices in the
bipartite graph, and can thus be partitioned into two sets $S_1$ and $S_2$, 
based on which of the
two parts
of $N$ vertices in the graph they pertain to. Let us
call $S$ the larger of $S_1$ and $S_2$; $S$ consists of at least $\lceil
\frac{m-1}{2} \rceil$ rows.  Let us examine the $(\# S) \times m$
sub-matrix $\GG''$ constructed from exactly these rows.  We know that
each column of $\GG''$ has exactly one entry  $1$, since all the
rows of $\GG''$ correspond to the same group of vertices in the
bipartite graph. Therefore, summing all the rows of $\GG''$ gives a
vector $v$ of only zeros and ones; since each row in $\GG''$
contains at least two entries equal to 1, the sum of all entries in $v$
is at least $2\parr{\lceil
\frac{m-1}{2} \rceil} \geq m-1$. The vector 
$v$ has thus at least $m-1$ entries equal to $1$; the remaining $m$th
entry of this linear combination of the top $m-1$ rows
of $\GG'$ is either 1 or 0. In the first case, the determinant
of $\GG'$ vanishes, since its last row also consists of only ones.
In the second case, we can subtract $v$ from the last row of 
$\GG'$ without changing the value of the determinant; the resulting last 
row has all entries but one equal to 0, with a remaining entry equal
to 1. The determinant is then given by the minor
of this remaining entry, and is thus 0, 1 or -1 by the unimodularity
of $\GG$.
\end{proof}

\renewcommand{\thesection}{ \Alph{section}}
\section{}
\label{a:approximating_optimal_transport}
\renewcommand{\thesection}{\Alph{section}}

In this Appendix we provide a constructive procedure and convergence
analysis for approximating the optimal transport cost between
\emph{general} separable complete compact metric spaces
$(\X,d_\X),(\Y,d_\Y)$ each equipped with a probability measure $\mu
\in P(\X),\nu \in P(\Y)$, where $P(\X)$ ($P(\Y)$) denotes the set of
probability measures on $\X$ ($\Y$). In the context of the algorithm
previously described $\X,\Y$ are the two given surfaces, $d_\X,d_\Y$
the corresponding geodesic distance metric functions, and $\mu,\nu$
the area measures of the surfaces induced from the metric tensors,
respectively. Since $R,\mu,\nu$ are kept fixed through this
discussion, we will denote, for brevity, $c(x,y) =
d^R_{\mu,\nu}(x,y)$.

The Kantorovich optimal transportation cost of the measures
$\mu,\nu$ is defined as
\begin{equation}\label{e:Kantorovich_optimal_transport_functional}
    \T_c(\mu,\nu) =
    \mathop{\inf}_{\pi \in \Pi(\mu,\nu)}\int_{X \times Y}c(x,y)d\pi(x,y),
\end{equation}
where $\Pi(\mu,\nu) \subset P(\X\times \Y)$ is the set of
probability measures on $\X \times \Y$ with marginals $\mu,\nu$,
that is, $\pi\in \Pi(\mu,\nu) \Rightarrow\pi(A \times \Y) = \mu(A)$
and $\pi(\X\times A')=\nu(A')$, for all Borel $A\subset \X,
A'\subset\Y$.

The main goal of this section is to present an \emph{approximation
result} for $\T_c(\mu,\nu)$ in this general framework. In
particular, this result will assure the convergence of our
algorithm.

To our knowledge, the only related result talks merely about
convergence of the optimal cost (e.g., \cite{Villani:2003}, Theorem
5.20). However, for practical applications it is important to
control the \emph{rate} of convergence, and therefore to be able to
compute error-bounded approximations.

In the specific case of $c(x,y)=\norm{x-y}^2$ there are good
approximation techniques that rely on the polar decomposition of
Brenier, for example the work of Haker and collaborators
\cite{Haker04optimalmass}. However, as far as we are aware no
approximation result is known in the general metric case as required
here.

We will show that solving discrete mass-transportation between two
sets of discrete measures $\mu_S,\nu_T$, based on Voronoi diagrams
of two collections of points $S=\set{s_i} \subset \X, T=\set{t_j}
\subset \Y$, achieve linear approximation order to the continuous
limit mass-transport cost $\T_c(\mu,\nu)$:
$$\Big | \T_c(\mu,\nu) - \T_c(\mu_S, \nu_T)  \Big | \leq \omega_c \parr{2h},$$
where $\omega_c(\alpha)$ is the modulus of continuity of $c$ defined
by
$$\omega_c(\alpha) = \sup_{d_\X(x,x')  + d_\Y(y,y') <
\alpha}\abs{c(x,y)-c(x',y')},$$ and $h=\max\{\eta(S),\eta(T)\}$,
where the \emph{fill distances} $\varphi_\X(S),\varphi_\Y(T)$ are
defined as before by
\begin{equation}\label{e:fill_distance}
    \varphi_\X(S)=\sup \Big\{ r\in\Real \ \Big | \
\exists x\in\X \ s.t. \ B_\X(x,r)\cap S = \emptyset    \Big \},
\end{equation}
where $B_\X(x,r) = \{q\in\X \ | \ d_\X(x,q)<r\}$ (and similarly for
$\varphi_\Y(T)$).

In particular, for Lipschitz cost function $c$ with Lipschitz
constant $\lambda$, we have the following bound for the error in the
approximation:
\begin{equation}\label{e:error_bound_in_optimal_approx}
    \Big | \T_c(\mu,\nu) - \T_c(\mu_S, \nu_T)  \Big | \leq 2\lambda h.
\end{equation}

In turn, this result suggests an algorithm for approximating
$\T_c(\mu,\nu)$: simply spread points $S\subset \X$ and $T \subset
\Y$ such that no big empty space is left uncovered, then compute
$\T_c(\mu_S,\nu_T)$ using linear-programming solver.

\subsection{Voronoi cells and discrete measures} Let
$(\X,d_\X,\mu),(\Y,d_\Y,\nu)$ be two compact, complete, separable
metric spaces with probability measures defined over the Borel sets.

The discrete measures will based on discrete sets of points
$S=\{s_i\}_{i=1}^m \subset \X$,$T=\{t_i\}_{i=1}^n \subset \Y$ and
the coarseness of the sets will be measured by means of the
so-called \emph{fill-distance} $\varphi_\X(S),\varphi_\Y(T)$ of the
sets $S,T$. The fill-distance of $S \subset \X$ is defined in
eq.~(\ref{e:fill_distance}). The sets get ``finer'' as
$\max\set{\varphi_\X(S),\varphi_\Y(T)}\too 0$.

\begin{defn}\label{def:voronoi_cells}
For set of points $S=\{s_i\}_{i=1}^m\subset \X$ we define
\begin{equation}\label{e:voronoi_cells_Oi}
    O^S_i = \Big\{ x\in \X \ \Big |
    \ d_\X(x,s_i) < \min_{j \ne i} d_\X(x,s_j)\Big\}.
\end{equation}
\begin{equation}\label{e:voronoi_cells_Ci}
    C^S_i = \Big\{ x\in \X \ \Big |
    \ d_\X(x,s_i) \leq \min_{j \ne i} d_\X(x,s_j)\Big\}.
\end{equation}
Then, {Voronoi cells} for the point set $S$ is any collection of
sets $\{V_i\}_{i=1}^m$ satisfying
\begin{enumerate}
  \item $\cup_{i=1}^m V_i = \X$.
  \item $V_i \cap V_j = \emptyset$ for $i \ne j$.
  \item $O^S_i \subset V_i \subset C^S_i$, for all $i=1,..,m$.
\end{enumerate}
\end{defn}

We will prove now a simple lemma, for later use, connecting the
fill-distance with the geometry of the Voronoi cells.
\begin{lem}\label{lem:voronoi_in_balls}
Let $S = \{s_i\}_{i=1}^m \subset \X$. If $\{V_i\}_{i=1}^m$ is a
collection of Voronoi cells corresponding to $S$, then for all $\eps
>0$,
$$V_i \subset B(s_i,\varphi_\X(S)+\eps), \ \ i=1,..,m$$
\end{lem}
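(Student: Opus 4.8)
The plan is to unwind the two definitions directly; there is essentially no hidden difficulty here beyond being careful that the fill distance is a supremum. Fix an index $i$ and an arbitrary point $x\in V_i$. By property (3) of Definition \ref{def:voronoi_cells} we have $V_i\subset C^S_i$, so $d_\X(x,s_i)\leq d_\X(x,s_j)$ for \emph{every} $j$. Hence it suffices to exhibit a single index $j$ with $d_\X(x,s_j)<\varphi_\X(S)+\eps$, and the triangle-inequality-free estimate $d_\X(x,s_i)\leq d_\X(x,s_j)$ will finish the job.

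To produce such a $j$, I would use the definition (\ref{e:fill_distance}) of the fill distance. Since $\varphi_\X(S)$ is the supremum of all $r$ for which \emph{some} point of $\X$ has its open $r$-ball disjoint from $S$, any radius $r>\varphi_\X(S)$ fails to have this property: for that $r$, \emph{every} point of $\X$ has its open $r$-ball meeting $S$. Apply this to $r=\varphi_\X(S)+\eps$ and to the point $x$ itself: then $B_\X(x,\varphi_\X(S)+\eps)\cap S\neq\emptyset$, i.e. there is an $s_j$ with $d_\X(x,s_j)<\varphi_\X(S)+\eps$.

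Combining the two inequalities gives $d_\X(x,s_i)\leq d_\X(x,s_j)<\varphi_\X(S)+\eps$, so $x\in B(s_i,\varphi_\X(S)+\eps)$. As $x\in V_i$ was arbitrary, $V_i\subset B(s_i,\varphi_\X(S)+\eps)$, and since $\eps>0$ was arbitrary the statement of the lemma follows.

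The one point that deserves attention — and the reason the $\eps$ cannot be removed — is precisely that $\varphi_\X(S)$ is a supremum rather than a maximum: a ball of radius exactly $\varphi_\X(S)$ around $x$ need not meet $S$, so one genuinely has to pass to a slightly larger radius. No use is made of compactness, completeness, or separability of $\X$ in this particular lemma; those hypotheses are needed only elsewhere (for instance to guarantee existence of optimal transport plans in Theorem \ref{t:convergence_optimal_cost}).
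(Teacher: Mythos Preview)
Your proof is correct and follows essentially the same approach as the paper's: both use the fill-distance definition to find a sample point within $\varphi_\X(S)+\eps$ of $x$, then use the Voronoi containment $V_i\subset C^S_i$ to conclude $s_i$ is at least as close. The only cosmetic difference is that the paper argues by contrapositive (assuming $x\notin B(s_i,\varphi_\X(S)+\eps)$ and showing $x\notin V_i$, which forces it to work with radius $\varphi_\X(S)+\eps/2$ to get a strict inequality), whereas your direct argument is slightly cleaner.
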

\begin{proof}
Take $x \notin B(s_i,\varphi_\Y(S)+\eps)$.

By the definition of the fill-distance we have that
$$B\parr{x, \varphi_\X(S)+\frac{\eps}{2}}\cap S \ne \emptyset.$$
That is, there exists $s_k \in S$, $k\ne i$, such that
$$d_\X(x,s_k)<\varphi_\X(S)+\frac{\eps}{2}.$$ However, $$d_\X(x,s_i)\geq
\varphi_\X(S)+\eps > d_\X(x,s_k).$$ Hence from the definition of the
Voronoi cells
$$x \in \X \setminus C^S_i \subset \X \setminus V_i,$$
that is $x \notin V_i$.
\end{proof}

Given a set of points and a Voronoi cell collection
$S,\{V_i\}_{i=1}^m$ we define a discrete measure $\mu_S$ by
\begin{equation}\label{e:mu_S}
    \mu_S = \sum_{i=1}^m \mu(V_i)\delta_{s_i},
\end{equation}
where $\delta_{s_i}$ is the dirac measure centered at $s_i$. That is
$$\int_\X f \ d\mu_S = \sum_{i=1}^m \mu(V_i)f(s_i).$$
Similarly we define Voronoi cell collection $\{W_j\}_{j=1}^n$ for
point set $T \subset \Y$, and corresponding discrete measure
$\nu_T$.

Let us prove that the discrete measures $\mu_S,\nu_T$ converge in
the weak sense to $\mu,\nu$. By weak convergence $\mu_S \too \mu$ of
measure we mean (see for example \cite{Billingsley68}) that for
every continuous bounded function $f:\X \too \Real$ there exists
$$\int_\X f\ d\mu_S \To \int_\X f\ d\mu\ ,  \ \ \mathrm{as }\
\varphi_\X(S)\too 0.$$

\begin{thm}
$\lim_{\varphi_\X(S)\too 0}\int_\X f\ d\mu_S = \int_\X f\ d\mu$, for
all $f:\X\too \Real$ bounded and continuous.
\end{thm}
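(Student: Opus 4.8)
The plan is to exploit the partition property of the Voronoi cells together with the uniform continuity of $f$. First I would rewrite the difference, using $\cup_{i=1}^m V_i = \X$ and $V_i\cap V_j=\emptyset$ for $i\neq j$, as
$$\int_\X f\, d\mu_S - \int_\X f\, d\mu \;=\; \sum_{i=1}^m \mu(V_i)f(s_i) - \sum_{i=1}^m\int_{V_i} f\, d\mu \;=\; \sum_{i=1}^m\int_{V_i}\parr{f(s_i)-f(z)}\,d\mu(z),$$
so that $\abs{\int_\X f\, d\mu_S - \int_\X f\, d\mu} \leq \sum_{i=1}^m\int_{V_i}\abs{f(s_i)-f(z)}\,d\mu(z)$. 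This is the only structural manipulation needed; everything else is a size estimate on the integrand inside each cell.

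Next, since $\X$ is compact and $f$ continuous, $f$ is bounded and uniformly continuous. Fix $\eta>0$ and choose $\delta>0$ so that $\abs{f(x)-f(x')}<\eta$ whenever $d_\X(x,x')<\delta$. Suppose the sampling is fine enough that $\varphi_\X(S)<\delta/2$. Applying Lemma \ref{lem:voronoi_in_balls} with $\eps=\delta/2$ gives $V_i\subset B\parr{s_i,\varphi_\X(S)+\delta/2}\subset B(s_i,\delta)$ for every $i$, hence $\abs{f(s_i)-f(z)}<\eta$ for all $z\in V_i$. Combining with the estimate above and $\sum_{i=1}^m\mu(V_i)=\mu(\X)=1$, we obtain $\abs{\int_\X f\, d\mu_S-\int_\X f\, d\mu}\leq \eta$ whenever $\varphi_\X(S)<\delta/2$. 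Since $\eta>0$ was arbitrary, this is exactly $\int_\X f\, d\mu_S\too\int_\X f\, d\mu$ as $\varphi_\X(S)\too 0$; tracking constants, the same argument yields the quantitative estimate $\abs{\int_\X f\, d\mu_S-\int_\X f\, d\mu}\leq\omega_f(\varphi_\X(S)+\eps)$ for every $\eps>0$, where $\omega_f$ is the modulus of continuity of $f$. (The analogous statement for $\nu_T$ on $\Y$ follows identically, and is what is used elsewhere in this appendix.)

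I do not expect a genuine obstacle here: the claim reduces to uniform continuity on a compact metric space plus the geometric containment already established in Lemma \ref{lem:voronoi_in_balls}. The only minor bookkeeping points are (i) that the $V_i$ should be taken $\mu$-measurable, which is harmless since $O^S_i$ is open and $C^S_i$ closed and any measurable selection sandwiched between them works; and (ii) that the auxiliary $\eps$ appearing in Lemma \ref{lem:voronoi_in_balls} must be absorbed, which is why the quantitative bound is phrased with the extra $\eps$ (and collapses to $\omega_f(\varphi_\X(S))$ when $\omega_f$ is right-continuous, e.g. when $f$ is Lipschitz). Boundedness of $f$ plays no essential role beyond making the integrals finite and is in any case automatic from compactness.
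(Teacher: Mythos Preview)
Your proof is correct and follows essentially the same approach as the paper: rewrite the difference as a sum over Voronoi cells, use compactness to get uniform continuity, and invoke Lemma~\ref{lem:voronoi_in_balls} to bound $\abs{f(s_i)-f(z)}$ on each cell. You are in fact slightly more careful than the paper in absorbing the auxiliary $\eps$ from Lemma~\ref{lem:voronoi_in_balls} by taking $\varphi_\X(S)<\delta/2$, whereas the paper tacitly passes from $V_i\subset B(s_i,\varphi_\X(S)+\eps)$ for all $\eps>0$ directly to $V_i\subset B(s_i,\delta(\eps))$.
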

\begin{proof}
$X$ is compact therefore $f$ is uniformly continuous. Take arbitrary
$\eps>0$, let $\delta(\eps)>0$ be such that
$$x,x'\in\X \ , \ d_\X(x,x') < \delta(\eps) \ \Rightarrow \ \abs{f(x)-f(x')}<\eps.$$
For $S$ with $\varphi_\X(S)<\delta(\eps)$ we have
$$\abs{\int_\X f\ d\mu - \int_\X f\ d\mu_S } = \abs{\sum_{i=1}^m \int_{V_i} \parr{f(x)-f(s_i)} \ d\mu} \leq $$
$$\leq \sum_{i=1}^m \int_{V_i}\abs{f(x)-f(s_i)}\ d\mu \leq \sum_{i=1}^m \eps \mu(V_i) = \eps,$$
where in the second to last transition we used the fact that $V_i
\subset B(s_i,\delta(\eps))$ which we know is the case from Lemma
\ref{lem:voronoi_in_balls}.
\end{proof}

Lastly, we will denote by $\Pi(\mu_S,\nu_T)\subset P(S\times T)$ the
subset of probability measures on the discrete product space
$S\times T$ with marginals $\mu_S,\nu_T$.

\subsection{Approximation of optimal cost} In this subsection we prove
the main result of this appendix. Namely, that the optimal transport
cost $\T_c(\mu_S,\nu_T)$ of the discrete measures $\mu_S,\nu_T$ is
an $\eps-$approximation to the optimal transport cost
$\T_c(\mu,\nu)$ of $\mu,\nu$ if the fill-distance
$h=\max\{\varphi_\X(S),\varphi_\Y(T)\} < \frac{1}{2}\delta(\eps)$,
where $\delta(\eps)$ is the uniform continuity constant of $c$.

Actually, we will prove a slightly stronger result: denote the sets
\begin{equation}\label{e:def_of_A_and_B}
    \A = \Big\{\int_{\X\times\Y}c\ d\pi \ \Big | \ \pi\in\Pi(\mu,\nu)  \Big\} \ \ , \ \ \B=\Big\{\int_{\X\times\Y}c\ d\pi \ \Big | \ \pi \in\Pi(\mu_S,\nu_T)  \Big\}.
\end{equation}
We will show that the Hausdorff distance $d_H(\A,\B) \rightarrow 0$
as $h\rightarrow 0$. Where by Hausdorff distance of two sets
$\A,\B\subset \mathds{R}$ we mean $$d_H(\A,\B) = \inf \Big\{r \Big |
\ \B \subset U(\A,r)\ , \ \A \subset U(\B,r) \Big\},$$ where
$U(\A,r)=\cup_{a\in \A}B(a,r)$ and $B(a,r)$ is the open ball of
radius $r$ centered at $a$. Moreover, we will provide a linear (in
$h$) bound controlling the convergence rate, for Lipschitz cost
function $c$.

\begin{thm}\label{thm:main_theorem}
If $c:\X\times\Y \To \Real_+$ is a continuous function, $\X,\Y$
compact complete separable metric spaces. Let
$h=\max\set{\varphi_\X(S),\varphi_\Y(T)}$ then,
$$d_H(\A,\B) \leq \omega_c(2h),$$ where $\A,\B$ are defined in (\ref{e:def_of_A_and_B}). In particular,
$$\Big | \T_c(\mu,\nu) - \T_c(\mu_S, \nu_T)  \Big | \leq \omega_c (2h).$$
\end{thm}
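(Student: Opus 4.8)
The plan is to establish the two Hausdorff inclusions $\B\subset U\!\big(\A,\omega_c(2h)\big)$ and $\A\subset U\!\big(\B,\omega_c(2h)\big)$ separately, in each case by taking a transport plan on one side and building from it, via the Voronoi partitions, a transport plan on the other side with almost the same cost. The single estimate that drives both directions is supplied by Lemma \ref{lem:voronoi_in_balls}: for every $\eps>0$ one has $V_i\subset B(s_i,\varphi_\X(S)+\eps)$ and $W_j\subset B(t_j,\varphi_\Y(T)+\eps)$, so that whenever $x\in V_i$ and $y\in W_j$,
\[
d_\X(x,s_i)+d_\Y(y,t_j)<\varphi_\X(S)+\varphi_\Y(T)+2\eps\le 2h+2\eps,
\]
and hence $|c(x,y)-c(s_i,t_j)|\le\omega_c(2h+2\eps)$; letting $\eps\downarrow0$ at the end yields the clean bound $\omega_c(2h)$ (in the Lipschitz case one simply bounds by $2\lambda h+2\lambda\eps$, with no limiting subtlety).

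First I would fix, once and for all, Borel Voronoi cell collections $\{V_i\}_{i=1}^m$ for $S$ and $\{W_j\}_{j=1}^n$ for $T$; this is legitimate since the candidate sets $O^S_i,C^S_i$ of Definition \ref{def:voronoi_cells} are respectively open and closed. For the inclusion $\A\subset U(\B,\cdot)$, take any $\pi\in\Pi(\mu,\nu)$ and define the nonnegative matrix $\pi'_{ij}:=\pi(V_i\times W_j)$. Since $\{V_i\}$ partitions $\X$ and $\{W_j\}$ partitions $\Y$, one gets $\sum_j\pi'_{ij}=\pi(V_i\times\Y)=\mu(V_i)=\mu_S(\{s_i\})$ and symmetrically $\sum_i\pi'_{ij}=\nu(W_j)=\nu_T(\{t_j\})$, so $\pi'\in\Pi(\mu_S,\nu_T)$ and $\int c\,d\pi'\in\B$. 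Then, as $\{V_i\times W_j\}$ partitions $\X\times\Y$ and $\pi$ is a probability measure,
\[
\Big|\int c\,d\pi-\int c\,d\pi'\Big|\le\sum_{i,j}\int_{V_i\times W_j}\big|c(x,y)-c(s_i,t_j)\big|\,d\pi\le\omega_c(2h+2\eps),
\]
and pushing $\eps\downarrow0$ gives $\omega_c(2h)$.

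For the reverse inclusion $\B\subset U(\A,\cdot)$ I would run the construction backwards, spreading the discrete mass out over the Voronoi cells. Given $\pi'\in\Pi(\mu_S,\nu_T)$, for each $i$ with $\mu(V_i)>0$ set $\mu_i:=\mu(\,\cdot\cap V_i)/\mu(V_i)$, a probability measure carried by $V_i$, and likewise $\nu_j:=\nu(\,\cdot\cap W_j)/\nu(W_j)$; then put $\pi:=\sum_{i,j}\pi'_{ij}\,\mu_i\otimes\nu_j$, omitting any term whose Voronoi mass vanishes. A direct computation of marginals, $\pi(A\times\Y)=\sum_i\big(\sum_j\pi'_{ij}\big)\mu_i(A)=\sum_i\mu(A\cap V_i)=\mu(A)$ and symmetrically for the second marginal, shows $\pi\in\Pi(\mu,\nu)$, so $\int c\,d\pi\in\A$; and the very same estimate as before, now with $|c(x,y)-c(s_i,t_j)|$ integrated against $\mu_i\otimes\nu_j$ on $V_i\times W_j$, gives $|\int c\,d\pi-\int c\,d\pi'|\le\omega_c(2h)$. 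Combining the two inclusions yields $d_H(\A,\B)\le\omega_c(2h)$, and the final ``in particular'' assertion follows from $\T_c(\mu,\nu)=\inf\A$, $\T_c(\mu_S,\nu_T)=\inf\B$, and the elementary fact that $|\inf\A-\inf\B|\le d_H(\A,\B)$ for nonempty bounded subsets of $\Real$.

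The step I expect to need the most care is the reverse direction: the lifted plan $\pi$ must lie \emph{exactly} in $\Pi(\mu,\nu)$, not merely close to it, and this is precisely what forces the use of the normalized restrictions $\mu_i,\nu_j$ and relies essentially on the weights of $\mu_S$ being precisely the Voronoi masses $\mu(V_i)$ (so that $\sum_j\pi'_{ij}$ matches the total mass $\mu$ assigns to $V_i$). Beyond that, the remaining points are routine bookkeeping: Borel measurability of the chosen $V_i,W_j$, the degenerate cells with $\mu(V_i)=0$ or $\nu(W_j)=0$, and the passage from the $\eps$-inflated radii of Lemma \ref{lem:voronoi_in_balls} to the constant $\omega_c(2h)$ via $\eps\downarrow0$.
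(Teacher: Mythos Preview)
Your proposal is correct and follows essentially the same route as the paper: in both directions you use precisely the constructions the paper uses (pushing $\pi$ forward to $\Lambda=\sum_{i,j}\pi(V_i\times W_j)\,\delta_{(s_i,t_j)}$, and lifting $\Lambda$ back via $\pi=\sum_{i,j}\frac{\Lambda(V_i\times W_j)}{\mu(V_i)\nu(W_j)}\,\mu\times\nu\big|_{V_i\times W_j}$), together with the Voronoi--radius bound from Lemma~\ref{lem:voronoi_in_balls}. If anything you are slightly more careful than the paper about the $\eps$ in that lemma and about cells of zero mass.
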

\begin{proof}

Take arbitrary $S=\{s_i\}_{i=1}^m\subset \X$,
$T=\{t_i\}_{j=1}^n\subset\Y$. Choose collections of Voronoi cells
$\{V_i\}_{i=1}^m$, $\{W_j\}_{j=1}^n$ for $S,T$ respectively.

Now take $\pi\in\Pi(\mu,\nu)$.

Set $$\Lambda=\sum_{i=1}^m \sum_{j=1}^n \pi(V_i \times W_j)
\delta_{s_i,t_j}.$$ Then,
$$\Lambda(A\times \Y) = \sum_{i=1}^m \sum_{j=1}^n \pi(V_i \times W_j) \delta_{s_i}(A) = $$ $$\sum_{i=1}^m \pi(V_i \times \Y) \delta_{s_i}(A) = \sum_{i=1}^m \mu(V_i) \delta_{s_i}(A) = \mu_S(A).$$

Similarly $$\Lambda(\X \times A') = \nu_T(A').$$ Therefore $$\Lambda
\in \Pi(\mu_S,\nu_T).$$

Moreover,
$$\Big|\int_{\X \times \Y} c\ d\pi - \int_{\X \times \Y}c\ d\Lambda\Big|=
\Big|\sum_{i=1}^m\sum_{j=1}^n \int_{V_i \times W_j}
\big[c(x,y)-c(s_i,t_j)\big] \ d\pi \Big|\leq$$
$$\sum_{i=1}^m\sum_{j=1}^n \int_{V_i \times W_j} \big|c(x,y)-c(s_i,t_j)\big | \ d\pi.$$
For $(x,y)\in V_i \times W_j$, we have $d_\X(x,s_i) \leq
\varphi_\X(S)$, $d_\Y(y,t_j) \leq \varphi_\Y(T)$ which implies
\begin{equation}\label{e:main_thm_using_uniform_continuity}
\abs{c(x,y)-c(s_i,t_j)} \leq
\omega_c\parr{\varphi_\X(S)+\varphi_\Y(T)} \leq \omega_c(2h).
\end{equation}
So we have
$$\abs{\int_{\X \times \Y}c\ d\pi - \int_{\X \times \Y}c\ d\Lambda} \leq \omega_c(2h) \sum_{i=1}^n \sum_{j=1}^n \pi(V_i \times W_j) = \omega_c(2h),$$
since $\set{V_i\times W_j}_{i,j=1,1}^{n,m}$ form a partition of
$X\times Y$. So we proved that
$$\A \subset \bbar{U(\B,\omega_c(2h))}.$$

We now prove the other direction. Take $\Lambda \in
\Pi(\mu_S,\nu_T)$.

Denote by $\mu\times\nu \big |_{V_i\times W_j}$ the product measure
$\mu\times\nu$ restricted to $V_i\times W_j \subset \X\times \Y$.

That is, $$\mu\times\nu \big |_{V_i\times W_j}(A\times A') =
\mu\times\nu \left((V_i\times W_j)\cap(A\times A') \right) =
\mu(V_i\cap A)\nu(W_j\cap A').$$

Now pick
$$\pi = \sum_{i=1}^m\sum_{j=1}^n \frac{\Lambda(V_i\times W_j)}{\mu(V_i)\nu(W_j)} \mu\times\nu\big | _{V_i\times W_j}.$$

Then,
$$\pi(A\times \Y) =
\sum_{i=1}^m\sum_{j=1}^n \frac{\Lambda(V_i\times
W_j)}{\mu(V_i)\nu(W_j)}\mu(V_i\cap A)\nu(W_j\cap \Y)=$$
$$\sum_{i=1}^m\sum_{j=1}^n \frac{\Lambda(V_i\times W_j)}{\mu(V_i)}\mu\parr{V_i\cap A}=$$
$$\sum_{i=1}^m \frac{\mu(V_i\cap A)}{\mu(V_i)}\brac{\sum_{j=1}^n \Lambda\parr{V_i\times W_j}}=$$
$$\sum_{i=1}^m \frac{\mu(V_i\cap A)}{\mu(V_i)}\Lambda\parr{V_i\times \Y}=$$
$$\sum_{i=1}^m \frac{\mu(V_i\cap A)}{\mu(V_i)}\mu(V_i)=$$
$$\sum_{i=1}^m \mu(V_i\cap A)=\mu(\X\cap A)=\mu(A).$$
Similarly
$$\pi(\X \times A') = \nu(A').$$
Therefore,
$$\pi \in \Pi(\mu,\nu).$$
Now,
$$\abs{\int_{\X \times \Y} c \ d\pi - \int_{\X \times \Y} c \ d\Lambda}=$$
$$\abs{\sum_{i=1}^m\sum_{j=1}^n \frac{\Lambda\parr{V_i\times W_j}}{\mu(V_i)\nu(W_j)}\int_{V_i\times W_j}c\ d \parr{\mu\times\nu} -
\sum_{i=1}^m\sum_{j=1}^n c(s_i,t_j) \Lambda(V_i\times W_j)}=$$
$$\abs{\sum_{i=1}^m\sum_{j=1}^n \brac{\frac{\Lambda\parr{V_i\times W_j}}{\mu(V_i)\nu(W_j)}\int_{V_i\times W_j}c\ d \parr{\mu\times\nu} -
\frac{\Lambda\parr{V_i\times W_j}}{\mu(V_i)\nu(W_j)}\int_{V_i\times
W_j} c(s_i,t_j) \ d\parr{\mu\times\nu} } }\leq$$
$$\sum_{i=1}^m\sum_{j=1}^n \frac{\Lambda\parr{V_i\times W_j}}{\mu(V_i)\nu(W_j)}\int_{V_i\times W_j}\abs{c(x,y) - c(s_i,t_j)}\ d \parr{\mu\times\nu} .$$
As before, for $(x,y)\in V_i \times W_j$, we have $d_\X(x,s_i) \leq
\varphi_\Y(S)$, $d_\Y(y,t_j) \leq \varphi_\Y(T)$ therefore
$$\abs{c(x,y)-c(s_i,t_j)} \leq \omega_c\parr{\varphi_\X(S) + \varphi_\Y(T)} \leq \omega_c(2h).$$
And therefore
$$\abs{\int_{\X \times \Y} c \ d\pi - \int_{\X \times \Y} c \ d\Lambda} \leq \omega_c(2h)\sum_{i=1}^m\sum_{j=1}^n \frac{\Lambda\parr{V_i\times W_j}}{\mu(V_i)\nu(W_j)}\mu(V_i)\nu(W_j)= $$
$$\omega_c(2h)\sum_{i=1}^m\sum_{j=1}^n \Lambda\parr{V_i\times W_j} = \omega_c(2h).$$ So we proved
$$\B \subset \bbar{U(\A,\omega_c(2h))},$$
and
$$ d_H(\A,\B) \leq \omega_c(2h).$$
In particular this means that
$$\Big | \T_c(\mu,\nu) - \T_c(\mu_S, \nu_T)  \Big | = \Big | \inf(\A) - \inf(\B) \Big | \leq  \omega_c(2h). $$

\end{proof}

We will call $c:\X\times\Y \To \Real_+$ Lipschitz continuous with a
constant $\lambda$ if $$\abs{c(x,y)-c(x',y')}\leq \lambda
\parr{d_\X(x,x')+d_\Y(y,y')}, \ \forall x,x'\in\X \ , \ y,y'\in\Y.$$
 {\bf Theorem \ref{t:convergence_optimal_cost}}
{\em Suppose $c:\X\times\Y \To \Real_+$ is a continuous function,
with $\X,\Y$ compact complete separable metric spaces, $S$ and $T$
are sample sets in $\X,\Y$ (resp.), $\mu,\nu$ are probability
measures on $\X,\Y$.
\\ (A) if $c$ is uniformly
continuous then $$\T_c(\mu_S, \nu_T) \too \T_c(\mu,\nu), \ \ \ as \
h \too 0,$$\\ (B) if $c$ is Lipschitz continuous with a constant
$\lambda$, then
$$\abs{\T_c(\mu,\nu) - \T_c(\mu_S,\nu_T) } < 2\lambda h,$$
where, $h=\max \set{\varphi_\X(S),\varphi_\Y(T)}$, and $\mu_S,\nu_T$
are as defined in (\ref{e:mu_S}).}
\begin{proof}
The result follow from the fact that for uniformly continuous $c$,
$\omega_c(t) \too 0$ as $t \too 0$, and that for Lipschitz $c$ with
constant $\lambda$, $\omega_c(t)\leq \lambda t$.
\end{proof}

Therefore, a simple generalization to the non-compact case, that is
when $\X,\Y$ are complete separable metric spaces, can be achieved
by requiring that $c:\X\times\Y \too \Real_+$ is uniformly
continuous on $\X\times\Y$.
\begin{cor}\label{cor:convergence_optimal_cost}
For $\X,\Y$ complete, separable metric spaces, and $c:\X\times\Y
\too \Real_+$ uniformly continuous $$\T_c(\mu_{S},\nu_T)\To
\T_c(\mu,\nu).$$ Moreover, for Lipschitz $c$ with Lipschitz constant
$\lambda$
$$\abs{\T_c(\mu,\nu) - \T_c(\mu_S,\nu_T) } < 2\lambda h,$$
where, as before, $h=\max \set{\varphi_\X(S),\varphi_\Y(T)}$.
\end{cor}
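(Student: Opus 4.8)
The plan is to observe that the proof of Theorem~\ref{thm:main_theorem} (and hence of Theorem~\ref{t:convergence_optimal_cost}) never genuinely uses the compactness of $\X$ and $\Y$: compactness entered only to guarantee that a continuous cost $c$ is automatically uniformly continuous. Once uniform continuity of $c$ is \emph{imposed} as a hypothesis, the whole argument carries over word for word to complete separable (but possibly non-compact) metric spaces. So I would simply walk through the proof of Theorem~\ref{thm:main_theorem} and flag which facts are actually invoked.

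First, the Voronoi-cell construction of Definition~\ref{def:voronoi_cells} and Lemma~\ref{lem:voronoi_in_balls} make sense in any metric space; in particular the inclusion $V_i\subset B(s_i,\varphi_\X(S)+\eps)$ holds for every $\eps>0$ with no change to the proof, so each cell is bounded whenever $h=\max\set{\varphi_\X(S),\varphi_\Y(T)}$ is finite. As before one takes the (finitely or countably many) cells to be Borel, so that $\mu(V_i)$, $\nu(W_j)$ are defined and $\mu_S,\nu_T$ are genuine probability measures; and $\mu\times\nu\in\Pi(\mu,\nu)$, $\mu_S\times\nu_T\in\Pi(\mu_S,\nu_T)$ show the sets $\A,\B$ of \eqref{e:def_of_A_and_B} are nonempty. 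If $h=\infty$ the claimed bound $2\lambda h$ is vacuous, so we may assume $h<\infty$.

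Second, the two inclusions $\A\subset\bbar{U(\B,\omega_c(2h))}$ and $\B\subset\bbar{U(\A,\omega_c(2h))}$ established in the proof of Theorem~\ref{thm:main_theorem} rely only on: (i) pushing a plan $\pi\in\Pi(\mu,\nu)$ forward to $\Lambda=\sum_{i,j}\pi(V_i\times W_j)\delta_{s_i,t_j}\in\Pi(\mu_S,\nu_T)$, and the reverse disintegration $\pi=\sum_{i,j}\frac{\Lambda(V_i\times W_j)}{\mu(V_i)\nu(W_j)}\,\mu\times\nu\big|_{V_i\times W_j}$ (terms with $\mu(V_i)\nu(W_j)=0$ simply omitted); and (ii) the single estimate $\abs{c(x,y)-c(s_i,t_j)}\leq\omega_c(\varphi_\X(S)+\varphi_\Y(T))\leq\omega_c(2h)$ for $(x,y)\in V_i\times W_j$, which is pure uniform continuity of $c$. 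Neither step uses compactness, so $d_H(\A,\B)\leq\omega_c(2h)$ and hence $\abs{\T_c(\mu,\nu)-\T_c(\mu_S,\nu_T)}=\abs{\inf\A-\inf\B}\leq\omega_c(2h)$ in the general setting, with no appeal to attainment of the infima. Part~(A) then follows from $\lim_{t\too0}\omega_c(t)=0$ (the definition of uniform continuity of $c$ on $\X\times\Y$), and part~(B), the displayed inequality, from $\omega_c(t)\leq\lambda t$ for $\lambda$-Lipschitz $c$, giving $\abs{\T_c(\mu,\nu)-\T_c(\mu_S,\nu_T)}\leq 2\lambda h$.

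The only point requiring any care — the ``main obstacle'', such as it is — is the bookkeeping in the reverse direction when some $\mu(V_i)=0$ or $\nu(W_j)=0$: there $\Lambda(V_i\times W_j)=0$ as well, since $\Lambda$ has marginals $\mu_S,\nu_T$ which put no mass on such $s_i,t_j$, so those terms drop out of the sum for $\pi$ and the marginal computations $\pi(A\times\Y)=\mu(A)$, $\pi(\X\times A')=\nu(A')$ are unaffected. This is identical to the compact case and needs no new idea; it is the one place one must write ``omitting the vanishing terms'' rather than just copying the formulas.
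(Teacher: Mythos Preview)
Your proposal is correct and follows exactly the approach the paper intends: the paper presents this corollary with no separate proof, merely remarking that ``a simple generalization to the non-compact case \ldots\ can be achieved by requiring that $c$ \ldots\ is uniformly continuous on $\X\times\Y$,'' which is precisely your observation that compactness enters the proof of Theorem~\ref{thm:main_theorem} only to ensure uniform continuity of $c$. Your treatment is in fact more careful than the paper's, which does not explicitly flag the $h=\infty$ case or the vanishing-cell bookkeeping.
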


Note that our argument used only one specific property of Voronoi
cells, namely that each is contained in an $h$-size closed ball.
Many other ways to partition $\X$ and $\Y$ can be considered. The
next Lemma uses a property of the Voronoi cells to show that the
proposed discretization with Voronoi cells is in some sense optimal.
%

\begin{lem}
Let $c:\X\times\Y\too \Real_+$ be Lipschitz with constant $\lambda$,
and let $S=\set{s_i}_{i=1}^m\subset \X$, $T=\set{t_j}_{j=1}^n
\subset \Y$ be given point sets. Then, among all the choices of
subdividing $\X$ and $\Y$, $\X=\cup_{i=1}^m Q_i$, and
$\Y=\cup_{j=1}^n R_j$, the Voronoi cells $Q_i=V_i, R_j=W_j$ minimize
a bound on the error term:
$$\abs{\int_{\X\times\Y}c\ d\pi - \int_{\X\times\Y}c\ d\Lambda}.$$
\end{lem}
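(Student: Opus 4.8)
The plan is to unwind the estimate buried in the proof of Theorem~\ref{thm:main_theorem}, but this time keeping explicit track of the dependence on the chosen subdivision, and then to recognize the resulting quantity as one that is pointwise minimized by the Voronoi assignment. First I would redo the first half of that proof verbatim, with general partitions $\X=\cup_{i=1}^m Q_i$ and $\Y=\cup_{j=1}^n R_j$ (disjoint, in the sense of Definition~\ref{def:voronoi_cells}) in place of the Voronoi cells: for any $\pi\in\Pi(\mu,\nu)$ form $\Lambda=\sum_{i,j}\pi(Q_i\times R_j)\delta_{s_i,t_j}$, which lies in $\Pi(\mu_S,\nu_T)$ for the discrete measures induced by this subdivision, and bound $\babs{\int_{\X\times\Y}c\,d\pi-\int_{\X\times\Y}c\,d\Lambda}$ by $\sum_{i,j}\int_{Q_i\times R_j}\abs{c(x,y)-c(s_i,t_j)}\,d\pi$.

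Next I would invoke the Lipschitz hypothesis $\abs{c(x,y)-c(s_i,t_j)}\le\lambda\parr{d_\X(x,s_i)+d_\Y(y,t_j)}$, split the double sum into an $x$-part and a $y$-part, and then — this is the one genuinely useful observation — collapse each part using the marginal constraints on $\pi$: for fixed $i$, $\sum_j\int_{Q_i\times R_j}d_\X(x,s_i)\,d\pi=\int_{Q_i}d_\X(x,s_i)\,d\mu(x)$, and symmetrically for the $y$-part with $\nu$. This shows that the error is bounded by
\[
\Phi\parr{\set{Q_i},\set{R_j}}=\lambda\sum_{i=1}^m\int_{Q_i}d_\X(x,s_i)\,d\mu(x)+\lambda\sum_{j=1}^n\int_{R_j}d_\Y(y,t_j)\,d\nu(y),
\]
a quantity that no longer depends on $\pi$ and that splits additively into an $\X$-piece and a $\Y$-piece.

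Finally I would minimize $\Phi$. Because it separates, it suffices to treat each piece on its own; for the $\X$-piece, since $\set{Q_i}$ is a partition the integrand $\sum_i\mathbf{1}_{Q_i}(x)\,d_\X(x,s_i)$ equals $d_\X(x,s_{i(x)})\ge\min_i d_\X(x,s_i)$ pointwise, so the integral is minimized exactly when, for $\mu$-a.e.\ $x$, the cell containing $x$ has a nearest site to $x$ as its base point — which is precisely what the inclusions $O^S_i\subset V_i\subset C^S_i$ in Definition~\ref{def:voronoi_cells} guarantee for the choice $Q_i=V_i$; the same argument with $\nu$ forces $R_j=W_j$ for the $\Y$-piece. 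I do not expect a real obstacle: the only two points needing a little care are (a) making sure the Lipschitz step is arranged so that, after using the marginals, the bound becomes $\pi$-free, and (b) the harmless remark that the Voronoi boundary $C^S_i\setminus O^S_i$ may be distributed among the cells in any way without changing the minimizing value of $\Phi$.
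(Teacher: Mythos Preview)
Your proposal is correct and follows essentially the same route as the paper: bound the error by $\sum_{i,j}\int_{Q_i\times R_j}|c(x,y)-c(s_i,t_j)|\,d\pi$, apply the Lipschitz inequality, collapse via the marginals to a $\pi$-free sum $\lambda\sum_i\int_{Q_i}d_\X(x,s_i)\,d\mu+\lambda\sum_j\int_{R_j}d_\Y(y,t_j)\,d\nu$, and observe that this is minimized by the Voronoi assignment. In fact you supply more detail than the paper does on the last step (the paper simply says ``it is not hard to see''), and you get the marginals the right way around where the paper's printed proof swaps $\mu$ and $\nu$.
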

\begin{proof}
$$\abs{\int_{\X\times\Y}c\ d\pi - \int_{\X\times\Y}c\ d\Lambda}
\leq \sum_{i=1}^m \sum_{j=1}^n \int_{Q_i\times R_j}
\abs{c(x,y)-c(s_i,t_j)} d\pi \leq $$
$$
\sum_{i=1}^m \sum_{j=1}^n \int_{Q_i\times R_j} \lambda
\brac{d_\X(x,s_i)+d_\Y(y,t_j)} d\pi = $$ $$ \lambda \sum_{i=1}^m
\int_{Q_i \times \Y} d_\X(x,s_i) d\pi + \lambda \sum_{j=1}^n
\int_{\X\times R_j} d_\X(y,t_j) d\pi=
$$
$$
\lambda \sum_{i=1}^m  \int_{Q_i} d_\X(x,s_i) d\nu + \lambda
\sum_{j=1}^n \int_{R_j} d_\X(y,t_j) d\mu,$$ and it is not hard to
see that the choices of $R_i,Q_j$ that minimizes this last error
bound are the Voronoi cells $R_i=V_i$, and $Q_j=W_j$, where
$\set{V_i}, \set{W_j}$ are the Voronoi cells corresponding to $S,T$,
respectively.
\end{proof}

\end{document}